\theoremstyle{plain}
\newtheorem{theorem}{Theorem}[section]
\newtheorem{thm}{theorem}[section]
\newtheorem{cor}[theorem]{Corollary}
\newtheorem{lem}[theorem]{Lemma}
\newtheorem{pro}[theorem]{Proposition}
\newtheorem{conjecture}[theorem]{Conjecture}
\newtheorem*{thm1}{Theorem 1}
\newtheorem*{thm2}{Theorem 2}
\newtheorem*{conj}{Conjecture}
\theoremstyle{definition}
\newtheorem{dfn}[theorem]{Definition}
\newtheorem{rem}[theorem]{Remark}
\newtheorem{block}[theorem]{}
\theoremstyle{remark}
\newtheorem*{notation}{Notation}
\numberwithin{equation}{section}
\newcommand{\lto}{\longrightarrow}
\newcommand{\eto}{\hookrightarrow} 
\newcommand{\ra}{\rightarrow}
\begin{document}

\title[Prym-Tyurin Varieties]{Polarization Types of Isogenous  Prym-Tyurin 
Varieties}

\author{V. Kanev}

\address{V. Kanev\\Dipartamento di Matematica ed Applicazioni\\
         Universit\`{a} di Palermo\\Via Archirafi n.34\\
          90123 Palermo \\Italy \\ and Institute of Mathematics of the Bulgarian Academy of Sciences}
         
\email{kanev@math.unipa.it}

\thanks{The first author was supported in part by the M.I.U.R. project "Geometria delle variet\`{a} algebriche e dei loro spazi di moduli" and 
by Grant MI-1503/2005 of the Bulgarian NSF}

\author{H. Lange}

\address{H. Lange\\Mathematisches Institut\\
              Universit\"at Erlangen-N\"urnberg\\
              Bismarckstra\ss e $1\frac{ 1}{2}$\\
              D-$91054$ Erlangen\\
              Germany}
\email{lange@mi.uni-erlangen.de}
\thanks{}

\keywords{Prym varieties, Prym-Tyurin varieties, polarization type, isogeny}
\subjclass[2000]{Primary 14H40; Secondary 14H30, 14K02}   
\date{}   
                                
\begin{abstract}
Let $p:C\overset{\pi}{\lto}C'\overset{g}{\lto}Y$ be a covering of smooth, projective curves with $\deg(\pi)=2$ and $\deg(g)=n$. Let 
$f:X\to Y$ be the covering of  degree $2^n$, where the curve $X$ parametrizes the liftings in $C^{(n)}$ of
the fibers of $g:C'\to Y$. Let $P(X,\delta)$ be the associated Prym-Tyurin variety, known to be isogenous to the Prym variety
$P(C,C')$.   Most
of the results in the paper focus on calculating the polarization type of the restriction of the canonical polarization
$\Theta_{JX}$ on $P(X,\delta)$.
We obtain the polarization type when $n=3$. When $Y\cong \mathbb{P}^1$ we conjecture that $P(X,\delta)$ is isomorphic 
to the dual of the Prym variety  $P(C,C')$. This was known when $n=2$, we prove it when $n=3$, and for arbitrary $n$ if $\pi:C\to C'$ is 
\'{e}tale. Similar results are obtained for some other types of coverings.
\end{abstract}

\maketitle

\section*{Introduction}

Let $W$ be a finite group which acts on a lattice $L$. Assume the 
representation of $W$ on $L_{\mathbb{C}}=L\otimes_{\mathbb{Z}}\mathbb{C}$ is 
irreducible. Let 
$\lambda\in L_{\mathbb{Q}}$ be a nonzero weight, i.e. 
$w\lambda-\lambda\in L$ for every $w\in W$. Suppose $f:X\to Y$ 
 is a covering of smooth, projective curves, with 
discriminant locus $\mathfrak{D}$, whose monodromy map can be 
decomposed as 
\begin{equation}\label{ei.1}
\pi_1(Y\setminus \mathfrak{D},y_0) \overset{m}{\lto} W \lto S(W\lambda)
\end{equation} 
As shown in \cite{K} one can construct a correspondence in 
$Div(X\times X)$ which induces on 
\[
A=Ker( Nm_f:JX\to JY)^{0}
\]
an endomorphism $\delta$ satisfying a 
quadratic equation $(\delta-1)(\delta+q-1)=0$. The construction in 
\cite{K} is done 
in the case $Y=\mathbb{P}^1$ and its extention to 
the general 
case $g(Y)\geq 0$ is immediate. We outline the necessary changes in 
Section~\ref{s0}.
We let 
\[
P(X,\delta) = (1-\delta)A
\] 
and call this Abelian  variety the {\it Prym-Tyurin variety} associated with 
the covering  $f:X\to Y$. 

Choosing another weight $\lambda'$, one has 
another permutation representation \linebreak
$W\to S(W\lambda')$. Composing with 
$m:\pi_1(Y\setminus \mathfrak{D},y_0) \to W$ and applying Riemann's Existence 
Theorem one obtains another covering $p:C\to Y$,  a correspondence in  
$Div(C\times C)$,
 an induced 
endomorphism $i$ and a Prym-Tyurin variety $P(C,i)$. It is 
known that $P(X,\delta)$ and $P(C,i)$ are isogenous. This can be 
proved by the same arguments as in the case $Y=\mathbb{P}^1$ (see 
\cite{K}, Section~6),  we give an outline in 
Section~\ref{s0}, or using generalized Prym varieties associated to 
Galois coverings (see \cite{D}). 

Let $\overline{\Theta}_{JX}$ and 
$\overline{\Theta} _{JC}$ be the restrictions of the canonical 
polarizations of $JX$ and $JC$ to $P(X,\delta)$ and  $P(C,i)$ 
respectively. A natural question is: what is the connection between the 
polarization types of $\overline{\Theta}_{JX}$ and $\overline{\Theta} 
_{JC}$. Are there simple formulas by which,
if one knows the polarization type associated with a given weight, 
 one can calculate 
the polarization type associated with every other weight?

  In all cases studied in the paper $W$ is a Weyl group of an irreducible root 
system $R$. Here the weights are the usual weights associated 
with irrreducible representations of the corresponding complex simple 
Lie algebras. 

The relation between $P(C,i)$ and $P(X,\delta)$ has been so far studied 
mainly when the restricted polarizations $\overline{\Theta} _{JC}$ and $\overline{\Theta}_{JX}$
are multiples of principal polarizations. In this case usually 
$P(C,i)$ and $P(X,\delta)$ are isomorphic as principally polarized Abelian varieties. For instance
Recillas' construction \cite{Re} corresponds to $Y=\mathbb{P}^1$, $W=W(R)$ where $R$ is a root system of type $A_3$,
$\lambda'=\omega_1$, $\lambda=\omega_2$. Here $\omega_1, \omega_2$ are the fundamental weights of the root system $R$
according to the enumeration in \cite{B}. Similarly Donagi's tetragonal construction corresponds to $Y=\mathbb{P}^1$,\;
$W=W(D_4), \;\lambda'=\omega_1$,\; $\lambda=\omega_3$ or $\lambda=\omega_4$. These two constructions were generalized 
in \cite{K}, when $Y=\mathbb{P}^1$, to $W=W(A_n)$, $\lambda'=\omega_1$, $\lambda=\omega_k, 2\leq k\leq n-1$ and $W=W(D_n)$,
$\lambda'=\omega_1$, $\lambda=\omega_{n-1}$ or $\lambda=\omega_{n}$. In the first case one obtains Prym-Tyurin varieties
isomorphic to Jacobians, in the second one Prym-Tyurin varieties isomorphic to Prym varieties.

The paper \cite{Pa} deals with non-principally polarized Prym varieties. Starting from a double ramified covering 
$\pi:C\to C'$ of a hyperelliptic curve 
$C'$, Pantazis constructs another double ramified covering $\pi':X\to X'$ of a hyperelliptic curve $X'$ and 
proves that the Prym variety $P(X,X')$ is isomorphic to the
dual $\hat{P}(C,C')$. In our set-up this corresponds to $Y=\mathbb{P}^1,\; W=W(B_2), \;\lambda'=\omega_1,\; \lambda=\omega_2$.\\ 

Most of  our results focus on the case $W=W(B_n),\; \lambda'=\omega_1, \;\lambda=\omega_n$. The fundamental weight $\omega_n$ is the
dominant weight of the spinor representation of the Lie algebra $so(2n+1)$, so we call it the spinor weight. Here the relation between
the coverings $p:C\to Y$ and $f:X\to Y$ may be described geometrically by the $n$-gonal construction \cite{D2}. Start  with a 
covering $p:C\overset{\pi}{\lto}C'\overset{g}{\lto}Y$, where $\deg(\pi)=2,\; \deg(g)=n$. Consider the embedding $g^*:Y\to C'^{(n)}$. Let
$Z$ be the curve  defined by the cartesian diagram
\begin{equation*}
\begin{CD}
Z        @>>>         C^{(n)}\\
@VVV                   @VV\pi^{(n)}V\\
Y        @>g^*>>       C'^{(n)}
\end{CD}
\end{equation*}
The points of $Z$ parametrize the liftings of points of $g^*(Y)$ in $C^{(n)}$. Then $X$ is the desingularization of $Z$ and $f:X\to Y$ is the associated covering of degree $2^n$. The two isogenous Prym-Tyurin varieties here are the Prym variety $P(C,C')$ and $P(X,\delta)$, where
 $\delta:JX\to JX$ satisfies the equation $(\delta|_A-1)(\delta|_A+2^{n-1}-1)=0$ when restricted on $A=Ker(Nm_f:JX\to JY)^0$. We prove the following theorem in Section~4.

\begin{thm1}
Let $p:C\overset{\pi}{\lto}C'\overset{g}{\lto}\mathbb{P}^1$ be a covering with $\deg(\pi)=2, \deg(g)=n$.
Let  $P=P(X,\delta)$, $P'=P(C,C')$ and  
let $E_{P}, E_{P'}$ be the Riemann forms of the polarizations $\Theta_{JX}|_P$, $\Theta_{JC}|_{P'}$ respectively.
Let $(d_1,\ldots,d_p)$ be the polarization type of $\Theta_{JC}|_{P'}$. There exists a canonical 
isogeny $\mu: \hat{P}'\to P$ such that
\[
\mu^{*}E_{P}=\frac{2^{n-1}}{d_{1}d_{p}}\hat{E}_{P'}
\]
where $\hat{P}', \hat{E}_{P'}$ are respectively the dual of $P(C,C')$ and the Riemann form  of the dual polarization.
\end{thm1} 
Every covering $p:C\overset{\pi}{\lto}C'\overset{g}{\lto}Y$ with $\deg(\pi)=2$ and $\deg(g)=n$ has monodromy group
contained in $W(B_n)\subset S_{2n}$. We say it  
is a simply ramified covering of type $B_n$ if all local monodromies  are reflections in $W(B_n)$. The branch locus of such
a covering is decomposed as $\mathfrak{D} = \mathfrak{D}_s\cup \mathfrak{D}_{\ell}$ according to the type of reflections: 
with respect to short roots; or with respect to long roots. One has that $g:C'\to Y$ is simply ramified in $\mathfrak{D}_{\ell}$,
$\pi:C\to C'$ is branched in $|\mathfrak{D}_s|$ points and $\pi(Discr(C\to C'))=\mathfrak{D}_s$. We conjecture the following statement
holds.

\begin{conj}
Assume $p:C\overset{\pi}{\lto}C'\overset{g}{\lto}\mathbb{P}^1$ is a simply 
ramified covering of type $B_n$.   Then the isogeny 
$\mu: \hat{P}(C,C')\to P(X,\delta)$ is an isomorphism. 
\end{conj}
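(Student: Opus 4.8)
\emph{Proof proposal.} The plan is to use Theorem~1 to reduce the conjecture to a single power‑of‑two identity for $\deg\bigl(\Theta_{JX}|_{P(X,\delta)}\bigr)$, and then to establish that identity by analysing the endomorphism $\delta$ and degenerating the covering. Since Theorem~1 already furnishes the isogeny $\mu:\hat P(C,C')\to P(X,\delta)$ with $\mu^{*}E_{P}=\frac{2^{n-1}}{d_{1}d_{p}}\hat E_{P'}$, the conjecture is equivalent to $\deg\mu=1$. Comparing the degrees of the polarization homomorphisms on the two sides of this relation, and feeding in the classical description of the polarization of the Prym variety of a ramified double cover — since $\mathfrak{D}_{s}\neq\emptyset$ the covering $\pi:C\to C'$ is ramified, $\pi^{*}:JC'\to JC$ is injective, $\ker(Nm_{\pi})$ is connected, and $\Theta_{JC}|_{P(C,C')}$ has type $(\underbrace{1,\dots ,1}_{s-1},\underbrace{2,\dots ,2}_{g'})$ with $g'=g(C')$, $2s=|\mathfrak{D}_{s}|$ and $\dim P=\dim P'=g'+s-1=:p$ — the conjecture becomes the single identity
\[
\deg\bigl(\Theta_{JX}|_{P(X,\delta)}\bigr)=4^{\,p(n-1)-g'}
\]
(stated for $|\mathfrak{D}_{s}|\geq 4$; the case $|\mathfrak{D}_{s}|=2$ is entirely analogous, with a different exponent).

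To attack this identity I would exploit the structure of $\delta$. Because $Y=\mathbb{P}^{1}$ we have $JY=0$, hence $A=JX$, and $(\delta-1)(\delta+2^{n-1}-1)=0$ gives $(1-\delta)^{2}=2^{n-1}(1-\delta)$ on $JX$. Thus $\varepsilon:=1-\delta$ is a symmetric endomorphism with $\varepsilon^{2}=2^{n-1}\varepsilon$, $P(X,\delta)=\IM(\varepsilon)$, and $\widetilde P:=\IM(2^{n-1}-\varepsilon)$ is the complementary abelian subvariety, so that $\deg\bigl(\Theta_{JX}|_{P(X,\delta)}\bigr)=\bigl|P(X,\delta)\cap\widetilde P\bigr|$. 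The task is therefore to identify the finite group $P(X,\delta)\cap\widetilde P$ — equivalently $\ker\mu$ — from the $n$-gonal geometry and the correspondence defining $\delta$; the relevant arithmetic is that of the action of $W(B_{n})$ on the lattice generated by the spinor weight $\omega_{n}$, the index $2=[\text{weight lattice}:\text{root lattice}]$ of $B_{n}$ being responsible for the exponents of $2$. Such a description is carried out directly for $n=3$ in Section~4; for general $n$ I would argue by deformation. For fixed numerical data $(g',s)$ the parameter space of simply ramified coverings of $\mathbb{P}^{1}$ of type $B_{n}$ should be connected — a braid‑group transitivity statement for tuples of short and long reflections of $W(B_{n})$ with prescribed product, in the spirit of the Clebsch--Hurwitz theorem for $S_{n}$ — so that the polarization types of both $\Theta_{JC}|_{P(C,C')}$ and $\Theta_{JX}|_{P(X,\delta)}$ are constant on it, and it then suffices to verify the identity at one convenient point.

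The main obstacle is precisely this one‑point verification for general $n$, together with the control of polarization types under the degenerations it requires. Two configurations look promising. First, $C'=\mathbb{P}^{1}$: here $P(C,C')=JC$ is principally polarized and the claim reduces to the concrete assertion that $P(X,\delta)\cong JC$ carrying $2^{n-1}$ times its canonical principal polarization, with $X$ and $\delta$ given by an explicit form of the $n$-gonal construction over $\mathbb{P}^{1}$. Second, a boundary point at which $C'$ degenerates to a chain of curves: over it the covering $f:X\to\mathbb{P}^{1}$ should decompose into pieces governed by root systems $B_{k}$ with $k<n$, so that the identity follows by induction on $n$ with base cases $n\leq 3$ and $\pi$ étale, both already settled in the paper. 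The delicate technical ingredients — and the reason the statement is only a conjecture — are the asserted connectivity of these $W(B_{n})$-Hurwitz spaces and, above all, the precise identification of the limits of $P(X,\delta)$ and of $\Theta_{JX}|_{P(X,\delta)}$ as $C'$ and the branch divisor degenerate.
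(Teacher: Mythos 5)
The statement you are addressing is stated in the paper as a conjecture: the paper itself establishes it only for $n=2$ (Mumford, Dalalyan, Pantazis), for $n=3$ (Theorem~\ref{s1.35}), and for arbitrary $n$ when $\pi:C\to C'$ is \'{e}tale (Proposition~\ref{s10.26}); it is explicitly left open for simple $B_n$-coverings with $n\geq 4$. Your proposal does not close that gap, and you say so yourself, so the honest verdict is: a correct reduction plus a research plan, not a proof. The reduction is sound and is exactly the paper's own mechanism in the cases it settles. Theorem~\ref{s10.25} gives $\mu^{*}E_{P}=\frac{2^{n-1}}{d_{1}d_{p}}\hat{E}_{P'}$, so $\mu$ is an isomorphism if and only if the polarization type of $\Theta_{JX}|_{P(X,\delta)}$ equals that of $\frac{2^{n-1}}{d_{1}d_{p}}\hat{E}_{P'}$; your degree identity $4^{p(n-1)-g'}$ for $|\mathfrak{D}_{s}|\geq 4$ checks out against the type computed in \S\ref{s10.24} and the conjectured type in Conjecture~\ref{s10.25a}, and this comparison of types is precisely how Theorem~\ref{s1.35} concludes for $n=3$. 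Note that the paper also offers a more homological criterion you do not exploit: by the last clause of Proposition~\ref{s0.7}, $\mu$ is an isomorphism if and only if $(s_{0})_{*}:H_{1}(X,\mathbb{Z})\to H_{1}(P(C,C'),\mathbb{Z})$ is surjective, and this is how the \'{e}tale case is actually settled.

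The genuine gaps are all in the second half. First, the proposed anchor point $C'\cong\mathbb{P}^1$ lies only in the stratum $g(C')=0$, so even granting connectivity of the Hurwitz space of simply ramified $B_n$-coverings with fixed $(|\mathfrak{D}_{s}|,|\mathfrak{D}_{\ell}|)$ --- a braid-transitivity statement you assert but do not prove --- a computation there says nothing about the strata with $g(C')>0$. To reach those you invoke a degeneration of $C'$ to a nodal or reducible curve; but that leaves the class of smooth simply ramified $B_n$-coverings on which $P(X,\delta)$ and $\Theta_{JX}|_{P(X,\delta)}$ are defined, and you supply no compactified theory of these Prym--Tyurin varieties, no identification of the limit of $P(X,\delta)$, and no argument that the restricted polarization type is preserved in the limit. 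Local constancy of the type over the smooth connected part of the Hurwitz space is unproblematic; the passage to and from the boundary carries all the difficulty, and that is exactly the content the conjecture is asking for. Second, even the anchor computation itself --- that for $C'\cong\mathbb{P}^1$ one has $P(X,\delta)\cong JC$ with $\Theta_{JX}|_{P(X,\delta)}\equiv 2^{n-1}\Xi$ --- is only proved in the paper for $n=3$ (Corollary~\ref{s1.39}); for $n\geq 4$ it is itself open. So everything beyond the reduction to the degree identity remains unproved, which is consistent with the paper's own assessment that the statement is a conjecture.
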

A positive answer to the conjecture would give the polarization type of \linebreak
$\Theta_{JX}|_{P(X,\delta)}$. When $n=2$ the statement of the conjecture is known to be true and is due to Pantasis \cite{Pa}. We give further evidence verifying
it when $n=3$ (see Theorem~\ref{s1.35}), as well as when $n$ is arbitrary and $\pi:C\to C'$ is \'{e}tale (Proposition~\ref{s10.26}). 
When $W=W(B_3)$,  $\lambda'=\omega_1$, $\lambda=\omega_3$, we obtain the polarization type of $\Theta_{JX}|_{P(X,\delta)}$ proving in 
Section~\ref{s1} the following result.

\begin{thm2}
Let $p:C\overset{\pi}{\lto}C'\overset{g}{\lto}Y$, with $\pi$ and $g$ ramified of degrees 2 and 3 respectively, 
be a simply ramified 
covering of type $B_3$. 
Let $f:X\to Y$ be the 
covering of degree 8 obtained by the 3-gonal construction and let 
$P(X,\delta)$ be the Prym-Tyurin variety. Then the polarization types 
of $\Theta_{JC}|_{P(C,C')}$ and
 $\Theta_{JX}|_{P(X,\delta)}$ are respectively
\begin{align*}
&(\underbrace{1,\ldots,1}_{\frac{1}{2}|\mathfrak{D}_{s}|-1},
\underbrace{2,\ldots,2}_{\frac{1}{2}|\mathfrak{D}_{\ell}|+2g(Y)-2},
\underbrace{2,\ldots,2}_{g(Y)})\\
&(\underbrace{2,\ldots,2}_{\frac{1}{2}|\mathfrak{D}_{\ell}|+2g(Y)-2},
\underbrace{4,\ldots,4}_{\frac{1}{2}|\mathfrak{D}_{s}|-1}, 
\underbrace{8,\ldots,8}_{g(Y)})
\end{align*}
If $Y\cong \mathbb{P}^1$, then  
$P(X,\delta)$ is isomorphic to the dual of $P(C,C')$.
\end{thm2}
Assuming $C'\cong \mathbb{P}^1$ in Theorem~2 we obtain a new family of principally polarized Prym-Tyurin varieties.
They have exponent 4 and are isomorphic to hyperelliptic Jacobians (Corollary~\ref{s1.39}). \\

The contents of the paper are as follows. Section~\ref{s0} contains preliminary material. In Section~\ref{s10} 
we study coverings $p:C\overset{\pi}{\lto}C'\overset{g}{\lto}Y$ with $\deg(\pi)=2, \deg(g)=n$ with the emphasis
on their connection with root systems of type $B_n$. We calculate the possible monodromy groups of the simply ramified coverings
of type $B_n$ under some restrictions. Applying the general constructions of Section~\ref{s0} we associate to a given covering
$p:C\overset{\pi}{\lto}C'\overset{g}{\lto}Y$ of
type $B_n$ and the spinor weight a covering $f:X\to Y$ of degree $2^n$, define and study  canonical correspondences in $Div(X\times C)$ and 
identify $X$ with the $n$-gonal construction \cite{D2}. Section~\ref{s2} contains some results about $P(X,\delta)$ valid for every $n$. 
In Section~\ref{s0a} we recall the definition of dual Abelian variety and dual polarization, prove a criterion which relates the Riemann forms of isogenous Prym-Tyurin varieties, give a proof of Theorem~1 and verify the isomorphism $\mu: \hat{P}(C,C')\overset{\sim}{\lto} P(X,\delta)$ when 
$\pi:C\to C'$ is \'{e}tale. In Section~\ref{s1} we study coverings associated with Weyl groups of root systems of rank $\leq 3$. We give a new and simpler proof of Pantazis' result ($W=W(B_2)$). We briefly discuss the simple case $W=W(A_3)$, the Recillas' construction. We give a proof of 
Theorem~2, and finally make explicit calculation of the Prym-Tyurin varieties and the polarization types when the 
monodromy group is contained in $W(D_3)$. In Section~\ref{s6} we study the cases $W=W(B_4)$ and $W=W(D_4)$. We cannot say much about the polarization types, however we include some relation between the Abelian varieties involved.

\medskip

\begin{notation}
Let $X=V/\Lambda$ and $X'=V'/\Lambda'$ be complex tori. If $f:X\to X'$ is a homomorphism, we denote by $\tilde{f}:V\to V'$ the unique $\mathbb{C}$-linear map with $\tilde{f}(\Lambda)\subset \Lambda'$ inducing $f$. Identifying $V$ and $V'$ with the tangent spaces $T_0X$ and $T_0X'$ we have that $\tilde{f}$ equals the differential $df$ at 0. Furthermore, identifying
$\Lambda$ and $\Lambda'$ with the homology groups $H_1(X,\mathbb{Z})$ and $H_1(X',\mathbb{Z})$ respectively, one has that 
$\tilde{f}|_\Lambda$ equals $f_{*}$, the induced homomorphism on homology. Similarly $\tilde{f}:V\to V'$ and 
$f_{*}:H_1(X,\mathbb{R})\to H_1(X',\mathbb{R})$ are the same maps if we make the identification of $V$ and $V'$ with $H_1(X,\mathbb{R})$ and $H_1(X',\mathbb{R})$ respectively as real vector spaces.
\end{notation}

\section{Prym-Tyurin varieties}\label{s0}
\begin{block}\label{s0.10}
Let $\mathbb{R}^{d}=\oplus_{i=1}^{d}\mathbb{R}e_{i}$, 
$\mathbb{R}^{e}=\oplus_{j=1}^{e}\mathbb{R}f_{j}$. Let $W$ be a finite 
group and let $W \to GL(\mathbb{R}^{d})$ and $W\to GL(\mathbb{R}^{e})$ 
be linear permutation representations, such that $W$ acts transitively 
on both $\{e_{1},\ldots,e_{d}\}$ and $\{f_{1},\ldots,f_{e}\}$. Let 
$A=(a_{ij})$ be a $d\times e$ matrix with integer entries and let 
$S:\mathbb{R}^{d}\to \mathbb{R}^{e}$ be the linear map given by 
$$
S(e_{i})=\sum_{j=1}^{e}a_{ij}f_{j}.
$$ 
Let $T:\mathbb{R}^{d}\to \mathbb{R}^{e}$,
$T_{1}:\mathbb{R}^{d}\to \mathbb{R}^{d}$ and 
$T_{2}:\mathbb{R}^{e}\to \mathbb{R}^{e}$ be linear maps which in the 
bases $\{e_{i}\}$, $\{f_{j}\}$ have matrices with all entries equal to 
1. We denote by 
${^{t}S}:\mathbb{R}^{e}\to \mathbb{R}^{d}$ and 
${^{t}T}:\mathbb{R}^{e}\to \mathbb{R}^{d}$ the linear maps with 
transposed matrices.
\end{block}
\renewcommand{\theenumi}{\roman{enumi}}
\begin{lem}\label{s0.10a}
Suppose $S$ is $W$-equivariant. Then there exist integers $a,b\in 
\mathbb{Z}$ such that 
\begin{enumerate}
\item
${^{t}T}\cdot S = {^{t}S}\cdot T = aT_{1}$,
\item
$T\cdot {^{t}S} = S\cdot {^{t}T} = bT_{2}$.
\end{enumerate}
\end{lem}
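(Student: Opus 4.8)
The plan is to exploit the transitivity of the $W$-action together with the $W$-equivariance of $S$ to show that the relevant compositions act as scalar multiples of the all-ones maps. First I would observe that each of the maps $T_1$, $T_2$, $T$, ${}^tT$ is itself $W$-equivariant: since $W$ permutes the basis $\{e_i\}$ (resp.\ $\{f_j\}$), the endomorphism sending every basis vector to $\sum_i e_i$ (resp.\ $\sum_j f_j$) commutes with $W$, and similarly for the rectangular all-ones maps. Hence all four composites ${}^tT\cdot S$, ${}^tS\cdot T$, $T\cdot{}^tS$, $S\cdot{}^tT$ are $W$-equivariant maps between permutation modules.

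Next I would compute these composites directly in the given bases to see they are already all-ones maps up to a scalar, and that the scalar is forced to be an integer. For (i): the map ${}^tT\cdot S$ sends $e_i$ to ${}^tT\big(\sum_j a_{ij} f_j\big) = \big(\sum_j a_{ij}\big)\cdot\big(\sum_k e_k\big)$, so in coordinates its matrix has $(k,i)$-entry equal to the $i$-th column sum of $A$. For this to be a multiple of $T_1$ (all entries $1$) we need all column sums of $A$ to be equal to a common integer; call it $a$. The equality ${}^tS\cdot T = aT_1$ is the transpose-side statement: ${}^tS\cdot T$ sends $e_i\mapsto {}^tS\big(\sum_k f_k^{\vee}\text{-image}\big)$, whose matrix has $(k,i)$-entry equal to the $k$-th \emph{row} sum of $A$; so this forces all row sums of $A$ to equal a common integer $b$, and then $T\cdot{}^tS = S\cdot{}^tT = bT_2$ follows symmetrically, giving (ii).

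So the crux is: \emph{why are all column sums of $A$ equal, and all row sums equal?} This is exactly where $W$-equivariance enters, and it is the one genuine point of the argument. Because $S$ is $W$-equivariant and $W$ acts transitively on $\{e_1,\dots,e_d\}$, for any $i,i'$ there is $w\in W$ with $we_i = e_{i'}$; applying $S$ and using equivariance, $S(e_{i'}) = wS(e_i) = \sum_j a_{ij}\, w f_j = \sum_j a_{ij} f_{\sigma(j)}$ where $\sigma$ is the permutation of $\{1,\dots,e\}$ induced by $w$. Comparing with $S(e_{i'}) = \sum_j a_{i'j} f_j$ gives $a_{i'\,\sigma(j)} = a_{ij}$, hence $\sum_j a_{i'j} = \sum_j a_{ij}$: all row sums of $A$ coincide. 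Dually, fixing a row index and using that $\sigma$ ranges over a transitive subgroup of permutations of the columns as $w$ varies (by transitivity on $\{f_j\}$), one gets that for each fixed $i$ the multiset of entries $\{a_{ij}\}_j$ is such that column sums $\sum_i a_{ij}$ are independent of $j$: indeed for columns $j,j'$ pick $w$ with $wf_j = f_{j'}$, and $W$-equivariance applied to the basis relation gives a bijection of rows carrying the $j$-column of $A$ to the $j'$-column, so $\sum_i a_{ij} = \sum_i a_{ij'}$. The common row sum and common column sum are integers since the $a_{ij}$ are; and counting the total $\sum_{i,j} a_{ij}$ two ways shows $da = eb$, though that identity is not needed for the statement. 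I expect the main obstacle to be bookkeeping the two transitive actions carefully enough to see that equivariance of $S$ transfers a row permutation to a column permutation (and vice versa) with matching entries; once that is set up, both (i) and (ii) are immediate matrix identities.
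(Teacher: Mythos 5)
Your proof is correct and follows essentially the same route as the paper: the paper applies $S$ to the $W$-invariant vector $\sum_i e_i$ and uses transitivity on $\{f_j\}$ to force the column sums $\sum_i a_{ij}$ to be constant (and dually for the row sums via ${}^tS$), while you derive the equivalent entrywise identity $a_{\tau(i)\sigma(j)}=a_{ij}$ and sum it; these are the same mechanism. One small caution: in your middle paragraph you call $\sum_j a_{ij}$ a ``column sum'' and attach the constant $b$ to ${}^tS\cdot T$, whereas ${}^tS\cdot T$ and ${}^tT\cdot S$ both require the \emph{same} constant (the common row sum $a$) and it is $T\cdot{}^tS=S\cdot{}^tT$ that needs the common column sum $b$ --- your final paragraph gets both constancy statements right, so the argument is sound, but the labels in the middle paragraph should be fixed for consistency.
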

\begin{proof}
\[
S(\sum_{i=1}^{d}e_{i}) = \sum_{j=1}^{e}(\sum_{i=1}^{d}a_{ij})f_{j} := 
\sum_{j=1}^{e}b_{j}f_{j}
\]
Let $w\in W$. One has
\[
S(\sum_{i=1}^{d}e_{i}) = S(\sum_{i=1}^{d}w(e_{i})) = 
wS(\sum_{i=1}^{d}e_{i}) = \sum_{j=1}^{e}b_{j}(w(f_{j}))
\]
Since $W$ acts transitively on $\{f_{j}\}$ we conclude that 
$b_{1}=\cdots =b_{e}=b$. So, $S\cdot {^{t}T}=bT_{2}$. Transposing we 
obtain $T\cdot{^{t}S}=bT_{2}$. This proves (ii). The proof of (i) is similar 
considering ${^{t}S}(f_{j})=\sum_{i=1}^{d}a_{ij}e_{i}$. One has 
$a=\sum_{j=1}^{e}a_{ij}$ for every i.
\end{proof}
\begin{block}\label{s0.11}
Let $W$, $\{e_{i}\}$, $\{f_{j}\}$ and $S$ be as in \S\ref{s0.10}. 
Suppose $C,X$ and $Y$ are smooth, projective curves, $Y$ is 
irreducible (but $C$ and $X$ might be reducible). Let 
$$
f:X\to Y \quad  \mbox{and} \quad  p:C\to Y
$$
 be coverings of degrees $d$ and $e$ respectively which are 
not branched in $Y\setminus \mathfrak{D}$. Let $y_{0}\in 
Y\setminus \mathfrak{D}$. Suppose the monodromy maps of $f$ and $p$ 
can be decomposed as 
$$
\pi_1(Y\setminus \mathfrak{D},y_0) \overset{m}{\to}W\to S_{d}
\quad \mbox{and} \quad \pi_1(Y\setminus \mathfrak{D},y_0) 
\overset{m}{\to}W\to S_{e}, 
$$
where $W$ acts transitively on 
$\{e_{1},\ldots,e_{d}\}$ and $\{f_{1},\ldots,f_{e}\}$ as in 
\S\ref{s0.10}. 

A $W$-equivariant linear map $S$ as in \S\ref{s0.10} 
induces a correspondence, which abusing notation,  we denote again by 
$S$. It is defined as follows. Fix  bijections $f^{-1}(y_0)\to 
\{e_{1},\ldots,e_{d}\}$ and 
$p^{-1}(y_0)\to \{f_{1},\ldots,f_{e}\}$. If $y \in Y\setminus \mathfrak{D}$, 
choose a path $\gamma$ in $Y\setminus \mathfrak{D}$ which connects $y$ with 
$y_{0}$.  Enumerate the points of the 
fibers over $y$ using covering homotopy along $\gamma$, so 
$f^{-1}(y)=\{x_{1},\ldots,x_{d}\}$ and 
$p^{-1}(y)=\{z_{1},\ldots,z_{e}\}$. Then 
\[
S(x_{i})=\sum_{j=1}^{e}a_{ij}z_{j}, \qquad 
{^{t}S}(z_{j})=\sum_{i=1}^{d}a_{ij}x_{i}
\]
Let $T\in Div(X\times C)$, $T_{1}\in Div(X\times X)$ and $T_{2}\in 
Div(C\times C)$ be the trace correspondences 
$$
T(x)=p^{*}(f(x)), \quad T_{1}(x)=f^{*}(f(x)), \quad T_{2}=p^{*}(p(x)).
$$ 
Lemma~\ref{s0.10a} yields
\begin{equation}\label{es0.12}
\begin{split}
{^{t}T}\circ S &= {^{t}S}\circ T = (\deg S)T_{1}\\
T\circ {^{t}S} &= S\circ {^{t}T} = (\deg {^{t}S})T_{2}
\end{split}
\end{equation}
\end{block}
\begin{lem}\label{s0.12}
Let 
$$
A=Ker (Nm_{f}:JX\to JY)^{0}, \quad B=Ker (Nm_{p}:JC\to JY)^{0}.
$$ 
Then the endomorphism $s:JX\to JC$ induced by the corespondence $S$ 
transforms $A$ into $B$ and $f^{*}(JY)$ into $p^{*}(JY)$.
\end{lem}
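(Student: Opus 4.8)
The plan is to use the relations \eqref{es0.12} together with the standard compatibility of induced endomorphisms with composition of correspondences. First I would record the two elementary facts that the composition of correspondences induces composition of the corresponding homomorphisms of Jacobians (up to the usual sign conventions), and that the trace correspondences act in the expected way: $T_1 = f^*\circ Nm_f$ on $JX$ and $T_2 = p^*\circ Nm_p$ on $JC$, while the correspondence $T$ induces $p^*\circ Nm_f$ (or its transpose $f^*\circ Nm_p$, depending on the direction). These are routine once one fixes the conventions in \S\ref{s0.11}.

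Next, to show $s(A)\subset B$, I would take $a\in A$, so $Nm_f(a)=0$, and check that $Nm_p(s(a))=0$. Using the first line of \eqref{es0.12}, namely ${}^tT\circ S = {}^tS\circ T$, and translating into endomorphisms, the left-hand side gives (a multiple of) $f^*\circ Nm_p\circ s$ while the right-hand side factors through $T$, hence through $Nm_f$, which kills $a$. Since $f^*$ is injective on $JY$ up to isogeny — more precisely $f^*$ has finite kernel — one concludes $Nm_p(s(a)) = 0$, so $s(a)\in \ker(Nm_p)$; because $A$ is connected and $s$ is a morphism, $s(A)$ lands in the connected component $B$. Alternatively, and more cleanly, one argues on the level of the identity component directly: $s$ restricted to $A$ is a morphism of abelian varieties, so its image is an abelian subvariety of $\ker(Nm_p)$ hence of $B$.

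For the second assertion, $s(f^*(JY))\subset p^*(JY)$, I would use the other relation in \eqref{es0.12}: from $S\circ {}^tT = (\deg {}^tS)\, T_2$ and ${}^tT$ inducing (a multiple of) $f^*$ after composing appropriately, one gets that $s\circ f^*$ equals $p^*\circ(\text{something on }JY)$, since $T_2 = p^*\circ Nm_p$ has image inside $p^*(JY)$. Concretely, $s\circ f^*\circ Nm_f = $ a multiple of $p^*\circ Nm_p\circ(\cdots)$, and since $f^*\circ Nm_f$ has image of finite index in $f^*(JY)$, taking images shows $s(f^*(JY))$ is contained in $p^*(JY)$, again using connectedness to pass from "finite index subgroup" to the whole abelian subvariety.

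The main obstacle is purely bookkeeping: getting the directions of the correspondences, the transposes, and the pullback/norm identities consistent with the conventions fixed in the Notation block and in \S\ref{s0.11}, so that \eqref{es0.12} translates into the correct identities among $s$, $f^*$, $p^*$, $Nm_f$, $Nm_p$. Once the dictionary "correspondence $\leftrightarrow$ homomorphism" is written down carefully, both inclusions are immediate from \eqref{es0.12} and the finiteness of $\ker f^*$, $\ker p^*$ (equivalently, that $f^*$, $p^*$ are injective up to isogeny), together with the connectedness of $A$ and of $f^*(JY)$.
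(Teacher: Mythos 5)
Your proposal is correct and follows essentially the same route as the paper: both translate the identities \eqref{es0.12} into relations among $s$, $f^{*}$, $p^{*}$, $Nm_f$, $Nm_p$, deduce $s(f^{*}(JY))\subset p^{*}(JY)$ from $S\circ{^{t}T}=(\deg {^{t}S})T_2$, and get $s(A)\subset B$ from $f^{*}Nm_p(s(u))=(\deg S)f^{*}Nm_f(u)$ together with the finiteness of $\ker f^{*}$ and the connectedness of $A$. The only caution is that finiteness of $\ker f^{*}$ alone gives $Nm_p(s(a))$ torsion rather than zero, so the connectedness of $A$ is what actually forces $Nm_p(s(A))=0$ --- you do invoke this, so the argument is complete.
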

\begin{proof}
Let $y\in Y$ and $z\in p^{-1}(y)$. Using \eqref{es0.12} one has 
$S(f^{*}(y)) = S({^{t}T}(z)) = (\deg {^{t}S})T_{2}(z) = (\deg 
{^{t}S})p^{*}(y)$. So $s(f^{*}(JY))\subset p^{*}JY$. If $x\in X$ and 
$y=f(x)$, then $f^{*}Nm_{p}(S(x)) = {^{t}T}(S(x)) = (\deg 
S)f^{*}(f(x))$, so for every $u\in JX$ one has 
$f^{*}Nm_{p}(s(u))=(\deg S)f^{*}Nm_{f}(u)$. This implies $s(A)\subset 
B$ since $f^{*}:JY\to JX$ has finite kernel and $A=Ker (Nm_{f})^{0}$ 
is connected.
\end{proof}
\begin{block}\label{s0.14}
Let $W$, $L$, $\lambda \in L_{\mathbb{Q}}, \lambda \neq 0$, 
$f:X\to Y$ be as in the introduction. We assume $Y$ is irreducible. 
Let $W\lambda =\{\lambda_{1},\ldots,\lambda_{d}\}$. Let $(\;|\;)$ be 
a symmetric, $W$-invariant, negative definite bilinear form such that 
$(w\lambda-\lambda|\lambda)\in \mathbb{Z}$ for $\forall w\in W$. The 
construction in \cite{K}, Section~4 yields a lattice 
$N(R,\lambda)\cong L\oplus \mathbb{Z}$, an action of $W$ on 
$N(R,\lambda)$, a symmetric bilinear $W$-invariant form $(\;,\:)$ 
on $N(R,\lambda)$, which extends $(\;|\;)$, an orbit $W(\ell) = 
\{\ell_{1},\ldots,\ell_{d}\}\subset N(R,\lambda)$ and a 
$W$-equivariant bijection $\ell_{i}\leftrightarrow \lambda_{i}$. 
One has
\begin{equation}\label{es0.14}
(\ell_{i},\ell_{j}) = (\lambda_i|\lambda_j-\lambda_i)-1
\end{equation}
and the following properties hold: $(\ell_{i},\ell_{j})\in \mathbb{Z}$;  
$(\ell_{i},\ell_{i})=-1$ and $(\ell_{i},\ell_{j})\geq 0$ for $i\neq 
j$. Let $E\in GL_d(\mathbb{R})$ be the identity.
One considers the $W$-equivariant linear map 
$G=D-E:\mathbb{R}^{d}\to \mathbb{R}^{d}$ given by
\[
G(e_i)=\sum_{j=1}^{d}(\ell_{i},\ell_{j})e_{j}
\]
One defines a correspondence $D-\Delta\in Div(X\times X)$ as in 
\S\ref{s0.11}. Here $\Delta$ is the diagonal. Let $A=Ker(Nm_{f}:JX\to 
JY)^{0}$. 
\end{block}
\begin{pro}\label{s0.14a}
The endomorphism $\delta :JX\to JX$ induced by $D$ leaves invariant 
$A$ and $f^{*}(JY)$. The restriction $\delta|_{A}:A\to A$ satisfies 
the quadratic equation 
$$
(\delta|_{A}-1)(\delta|_{A}+q-1)=0,
$$
where $q$, the exponent of the correspondence, 
is the integer 
$$
q=-d(\lambda|\lambda)/rk(L).
$$ 
Furthermore $q\geq 1$ and 
$q=1$ if and only if $D=0$.
\end{pro}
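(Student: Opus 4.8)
The plan is to work at the level of the lattice $N(R,\lambda)$ together with its bilinear form, exploiting the dictionary set up in \S\ref{s0.14} between the combinatorial data $(\ell_i,\ell_j)$ and the correspondence $D-\Delta$. First I would establish the quadratic equation for the linear map $G=D-E$ restricted to the relevant $W$-invariant subspace of $\mathbb{R}^d$. The key identity to prove is that the symmetric matrix $M=\big((\ell_i,\ell_j)\big)$ satisfies $M^2 = -qM$ modulo the rank-one piece spanned by $(1,\ldots,1)$, i.e. $M^2 + qM = cT_1$ for an appropriate constant $c$, where $T_1$ is the all-ones matrix; equivalently, on the orthogonal complement of the "trace" direction, $D$ satisfies $(D-1)(D+q-1)=0$. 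This reduces to a computation with the orbit $W(\ell)$: one writes $\sum_k (\ell_i,\ell_k)(\ell_k,\ell_j)$ as the Gram pairing of $\sum_k(\ell_i,\ell_k)\ell_k$ with $\ell_j$, and the vector $v_i:=\sum_k(\ell_i,\ell_k)\ell_k$ should, by $W$-equivariance and the structure of $N(R,\lambda)\cong L\oplus\mathbb{Z}$, be expressible as a linear combination of $\ell_i$ itself and the $W$-invariant vector; the coefficient of $\ell_i$ is forced to be $-q$ by taking traces or by evaluating on $\sum_k\ell_k$.

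Next I would compute $q$ explicitly. Pairing $v_i$ with the $W$-invariant element and summing over $i$ gives a relation involving $\sum_{i,j}(\ell_i,\ell_j)$; separately, using \eqref{es0.14} and the fact that $\sum_i\lambda_i$ is $W$-invariant hence equals $\frac{d}{\operatorname{rk}L}\cdot(\text{something})$ or is $0$ depending on whether $\lambda$ pairs trivially with the invariants, one extracts $\sum_{i,j}(\lambda_i|\lambda_j) = -\,d^2(\lambda|\lambda)/\operatorname{rk}(L)$ after using that the average of $\lambda_i$ over the orbit is the projection of $\lambda$ onto the $W$-invariants, which vanishes since the representation on $L_{\mathbb{C}}$ is irreducible and nontrivial. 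Combining these yields $q = -d(\lambda|\lambda)/\operatorname{rk}(L)$. Since $(\;|\;)$ is negative definite and $\lambda\neq 0$, we get $q>0$, and being an integer, $q\geq 1$.

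Then I would transfer the lattice-level identity to the abelian variety. The correspondence $D-\Delta$ induces $\delta - 1$ on $JX$; the trace correspondence $T_1$ induces an endomorphism factoring through $f^*\operatorname{Nm}_f$, which kills $A=\operatorname{Ker}(\operatorname{Nm}_f)^0$. Therefore the matrix identity $M^2+qM = cT_1$, rewritten as $(D-1)(D+q-1) = (\text{multiple of }T_1)$, descends to $(\delta|_A - 1)(\delta|_A + q - 1) = 0$ on $A$. That $\delta$ preserves $A$ and $f^*(JY)$ follows from Lemma~\ref{s0.12} applied with $S$ replaced by the self-correspondence $D$ (or directly from \eqref{es0.12} with $C=X$), since $D$ commutes with the trace correspondence up to the $T_1$ term.

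Finally, the equivalence "$q=1 \iff D=0$": if $D=0$ then $G=-E$, so $(\ell_i,\ell_j)=0$ for $i\neq j$ and $M=-E$, whence $q=1$ from the trace relation $M^2+qM=cT_1$ evaluated off-diagonal and on-diagonal. Conversely if $q=1$ then $(\lambda|\lambda) = -\operatorname{rk}(L)/d$; I would argue that $W(\lambda)$ is then an orthonormal-up-to-sign set forcing $(\ell_i,\ell_j)=0$ for $i\neq j$ (using $(\ell_i,\ell_j)\geq 0$ together with the constraint that the $d\times d$ Gram-type matrix of the $\ell_i$ has the right rank $\operatorname{rk}L+1$ inside $N(R,\lambda)$ — the nonnegativity is the crucial pigeonhole input), hence $G=-E$ and $D=0$. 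The main obstacle I anticipate is the clean computation of the vector $v_i=\sum_k(\ell_i,\ell_k)\ell_k$: showing it lies in $\mathbb{R}\ell_i \oplus \mathbb{R}\cdot(\text{invariant})$ requires genuinely using the specific construction of $N(R,\lambda)$ from \cite{K} rather than just abstract $W$-equivariance, since a priori $v_i$ could have components along other orbit elements; pinning down that this does not happen, and identifying the $-q$ coefficient, is the delicate point. The direction $q=1\Rightarrow D=0$ is the second delicate point, as it needs the sign condition $(\ell_i,\ell_j)\geq 0$ in an essential way.
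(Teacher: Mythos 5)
Your skeleton coincides with the paper's: the printed proof of Proposition~\ref{s0.14a} consists exactly of citing Lemma~\ref{s0.12} for the invariance of $A$ and $f^{*}(JY)$, the identity $G(G+qE)=mT$ of \cite{K}, Proposition~5.3 (your $M^{2}+qM=cT_{1}$, since $G$ \emph{is} the matrix $\bigl((\ell_i,\ell_j)\bigr)$), and the descent argument of \cite{K}, \S5.4 (your observation that $T_{1}$ induces a multiple of $f^{*}\circ Nm_{f}$, hence vanishes on $A$). So what you are really doing is reproving the results of \cite{K} that the paper takes as given, by the intended method; the architecture is right.

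Two concrete points. First, the formula $\sum_{i,j}(\lambda_i|\lambda_j)=-d^{2}(\lambda|\lambda)/\mathrm{rk}(L)$ is false and contradicts the observation you make in the same sentence: since $\sum_i\lambda_i$ is $W$-invariant and $L_{\CC}$ is irreducible and nontrivial, $\sum_i\lambda_i=0$, hence $\sum_{i,j}(\lambda_i|\lambda_j)=\bigl(\sum_i\lambda_i\big|\sum_j\lambda_j\bigr)=0$. The clean route to both the matrix identity and the value of $q$ is to consider $\Psi(x)=\sum_k(x,\ell_k)\ell_k$ on $N(R,\lambda)_{\RR}$: it is $W$-equivariant and self-adjoint for $(\;,\,)$, so by Schur's lemma it acts as a scalar $a$ on the irreducible summand $L_{\CC}$ and sends the ($W$-invariant) complement to itself; since $(M^{2})_{ij}=(\Psi(\ell_i),\ell_j)$ and the invariant component of $\ell_i$ is independent of $i$, this yields $M^{2}=aM+cT_{1}$ directly --- which disposes of your first ``delicate point'' with no appeal to the particular construction of $N(R,\lambda)$ --- and the trace computation $a\cdot\mathrm{rk}(L)=\mathrm{tr}(\Psi|_{L})=\sum_k(\lambda_k|\lambda_k)=d(\lambda|\lambda)$ gives $q=-a=-d(\lambda|\lambda)/\mathrm{rk}(L)$. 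Second, for $q=1\Rightarrow D=0$ a more efficient argument than your pigeonhole is via degrees: \eqref{es0.14} together with $\sum_j(\lambda_i|\lambda_j)=0$ gives $\deg D=\sum_{j\neq i}(\ell_i,\ell_j)=q\,\mathrm{rk}(L)-d+1$; all entries of $D$ are $\geq 0$, so $D=0$ iff $\deg D=0$, and the spanning of $N(R,\lambda)_{\RR}$ by the orbit $W(\ell)$ forces $d\geq \mathrm{rk}(L)+1$, whence $q=1$ implies $\deg D=0$. These are repairs of individual steps, not objections to your strategy.
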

\begin{proof}
This follows from Lemma~\ref{s0.12}, the equality $G(G+qE)=mT$ proved 
in \cite{K}, Proposition~5.3 and the argument of \cite{K}, \S5.4.
\end{proof}
Lemma~\ref{s0.12} and Proposition~\ref{s0.14a} show that the  
differential of $\delta$ at 0
\[
d\delta:T_{0}(JX)\to T_{0}(JX)
\] 
can be diagonalized and has 
eigenvalues $\deg D, 1, 1-q$ with eigenspaces which correspond to the 
Abelian subvarieties $f^{*}(JY),(\delta+q-1)A,(1-\delta)A$ respectively.

\begin{dfn}\label{s0.15}
We let $P(X,\delta)=(1-\delta)A$ and call this Abelian variety the 
Prym-Tyurin variety associated with the covering $f:X\to Y$. 
\end{dfn}
\begin{rem}\label{s0.15a}
We notice that the choice of the $W$-invariant bilinear form 
$(\;|\;)$ is irrelevant for the definition of the Prym-Tyurin 
variety. 
Indeed, multiplying $(\;|\;)$ by an integer $k$ one changes $G$ to 
$G'=kG+(k-1)T$, so $1-\delta'|_{A}=k(1-\delta|_{A})$ and 
therefore the Prym-Tyurin variety remains the same.
\end{rem}
\begin{block}\label{s0.16a}
Given the set-up of \S\ref{s0.14}, namely a covering $f:X\to Y$ whose 
monodromy map can be decomposed as 
$\pi_1(Y\setminus \mathfrak{D},y_0) \overset{m}{\to}W\to S(W\lambda)$, 
let us choose another $W$-orbit of 
weights 
$W\lambda'=\{\lambda'_{1},\ldots,\lambda'_{e}\}\subset L_{\mathbb{Q}}$. 
One associates with 
$\pi_1(Y\setminus \mathfrak{D},y_0) \overset{m}{\to}W\to S(W\lambda')$ 
a covering $f':X'\to Y$. Let 
$$
P\subset A \quad  \mbox{and} \quad  P'\subset A'=Ker(Nm_{f'}:JX'\to JY)^{0}
$$ 
be the two Prym-Tyurin varieties.
\end{block}
\pagebreak
\begin{pro}\label{s0.16}
The Abelian varieties $P$ and $P'$ are isogenous.
\end{pro}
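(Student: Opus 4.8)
The plan is to exhibit explicit isogenies between $P$, $P'$ and a common auxiliary abelian variety, using the $W$-equivariant correspondences of \S\ref{s0.11}. First I would recall that in the lattice $N(R,\lambda)$ we have the orbit $W(\ell)=\{\ell_1,\dots,\ell_d\}$ spanning a sublattice, and similarly an orbit $W(\ell')=\{\ell'_1,\dots,\ell'_e\}$ attached to $\lambda'$; since the $W$-module $L_\CC$ is irreducible by hypothesis, the $\QQ$-span of each orbit is the whole of $L_\QQ$ (plus the trivial summand). The key linear-algebra input is that the matrix $S=(s_{ij})$ defined by $s_{ij}=(\ell_i,\ell'_j)$ (suitably normalized to be integral) gives a $W$-equivariant map $\RR^d\to\RR^e$ in the sense of \S\ref{s0.10}, and likewise its transpose. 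Composing $S$ with $D-E$ on one side and $D'-E$ on the other, one checks using Lemma~\ref{s0.10a} and the product formulas $G(G+qE)=mT_1$, $G'(G'+q'E)=m'T_2$ from \cite{K}, Prop.~5.3, that $S$ intertwines the two quadratic actions up to multiplication by $T$-type correspondences, which kill $A$ and $A'$.

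Second, via \S\ref{s0.11} the matrix $S$ induces a correspondence in $Div(X'\times X)$, hence an endomorphism-like homomorphism $s:JX'\to JX$ (and $\,^ts:JX\to JX'$). By Lemma~\ref{s0.12}, $s$ maps $A'$ into $A$ and $f'^*(JY)$ into $f^*(JY)$; moreover, by the intertwining relation just mentioned, $s$ carries the $(-q)$-eigenpart of $\delta'$ into the $(-q)$-eigenpart of $\delta$, i.e. it restricts to a homomorphism $P'\to P$. The same argument applied to $\,^ts$ gives a homomorphism $P\to P'$. Third, I would show these two are mutually inverse up to isogeny: the composition $\,^ts\circ s$ (resp. $s\circ\,^ts$), restricted to $P'$ (resp. $P$), equals multiplication by a nonzero integer. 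This is where the irreducibility of $L_\CC$ is essential: the composite correspondence $\,^tS\cdot S$ on $\RR^e$ is $W$-equivariant and symmetric, so on the isotypic component carrying $W\lambda'$ it acts as a scalar $c$ by Schur's lemma, plus a multiple of $T_2$ on the trivial part; hence on $P'$ the composite $\,^ts\circ s$ is multiplication by $c$, and $c\neq 0$ because $S$ has rank equal to $\dim L$ on that component (again since both orbits span $L_\QQ$). Therefore $s|_{P'}$ has finite kernel and, by a dimension count ($\dim P=\dim P'$, both equal to $\tfrac12\mathrm{rk}\,L\cdot(\text{something})$, or more safely because the composites are isogenies in both directions), it is an isogeny.

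The main obstacle I anticipate is the bookkeeping needed to verify that $\,^tS\cdot S$ and $S\cdot\,^tS$ act as nonzero scalars on the relevant isotypic components, rather than merely as invertible operators — one must rule out that the scalar $c$ vanishes. The clean way is to note that $S$, restricted to the $L_\QQ$-isotypic component, is a nonzero $W$-equivariant map between two copies of the irreducible module $L_\QQ$, hence an isomorphism by Schur; then $\,^tS\cdot S$ is a nonzero scalar there automatically. One should also be careful that $S$ as a correspondence may a priori differ from the naive matrix by the diagonal or by trace terms (as with $D-\Delta$ versus $D$); but since all such discrepancies are combinations of $T,T_1,T_2$, which annihilate $A$ and $A'$ by Lemma~\ref{s0.12}, they do not affect the restrictions to $P$ and $P'$. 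With these points in hand the proposition follows; this is essentially the argument of \cite{K}, Section~6, adapted to $g(Y)\geq 0$ as outlined in \S\ref{s0.11}--\S\ref{s0.12}.
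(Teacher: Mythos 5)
Your argument follows the paper's proof essentially step for step: both construct the correspondence $S$ from the pairing $B(\ell_i,m_j)$ of the two orbits in the auxiliary lattice $N$, use the intertwining identities $(G'+q'E)\cdot S=c_1T$ and $S\cdot(G+qE)=c_2T$ to see that $s$ and ${}^ts$ restrict to homomorphisms between the two Prym--Tyurin varieties, and conclude from ${}^tS\cdot S=-q'G+d_1T_1$ (\cite{K}, Lemma~6.5.1) that the composites are multiplication by the nonzero integer $qq'$. The only deviation is your Schur's-lemma gloss on why the composite is a nonzero scalar; as stated it should be applied to the $(1-q')$-eigenspace of $\delta'$ (a single copy of the irreducible module, since the correspondences factor through $N_{\mathbb{R}}$) rather than to the full $L_{\CC}$-isotypic component of $\CC[W\lambda']$, whose multiplicity need not be one --- the paper sidesteps this by quoting the explicit identity, which yields the scalar $qq'$ directly.
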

\begin{proof}
This is proved when $Y=\mathbb{P}^1$ in \cite{K}, Section~6. The proof 
for arbitrary $Y$ is essentially the same, so we only sketch it, referring 
for details to \cite{K}. One chooses an appropriate $W$-invariant 
bilinear form $(\;|\;)$ on $L$, constructs a lattice $N\supset L$, an 
action of $W$ on $N$, a $W$-invariant, integer-valued, bilinear form 
$B$ on $N$, such that $B|_{L}=(\;|\;)$, and two $W$-orbits 
$\{\ell_1,\ldots,\ell_d\}$ and $\{m_1,\ldots,m_e\}$ in $N$ which are 
$W$-equivariantly bijective to $W\lambda$ and $W\lambda'$ 
respectively. One considers the linear map $S:\mathbb{R}^{d}\to 
\mathbb{R}^{e}$ given by 
\[
S(e_{i}) = \sum_{j=1}^{e}B(\ell_{i},m_{j})f_{j}
\]
This is a $W$-equivariant linear map and one employs it to construct a 
correspondence $S\in Div(X\times X')$ as in \S\ref{s0.11}. By 
Lemma~\ref{s0.12} it induces an endomorphism $s:A\to A'$ and similarly 
${^{t}S}$ induces ${^{t}s}:A'\to A$. By Remark~\ref{s0.15a} we may use 
the chosen common $(\;|\;)$ in order to calculate $P$ and $P'$. Let 
$G, G', \delta, \delta', q$ and $q'$ be the corresponding data as in 
\S\ref{s0.14} and Proposition~\ref{s0.14a}. The following equalities 
of linear maps are verified in the course of the proof of 
Theorem~6.5 of \cite{K}:
\begin{equation}\label{es0.18a}
(G'+q'E)\cdot S = c_{1}T   ,\qquad S\cdot (G+qE) = c_{2}T
\end{equation}
for some $c_1,c_2\in \mathbb{Z}$.
Transposing and passing to correspondences and homomorphisms this 
implies that 
\begin{equation}\label{es0.18b}
\begin{split}
s(A) \subset P', \qquad  & s((\delta+q-1)A) = 0\\
{^{t}s}(A')\subset P,\qquad  &{^{t}s}((\delta'+q'-1)A') = 0
\end{split}
\end{equation}
One has by \cite{K}, Lemma~6.5.1 that 
\begin{equation}\label{es0.18c}
{^{t}S}\cdot S = -q'G + d_1T_1, \qquad S\cdot {^{t}S} = -qG' + 
d_{2}T_{2}
\end{equation}
for some $d_{1},d_{2}\in \mathbb{Z}$. Passing to correspondences and 
induces homomorphisms of $A$ and $A'$ this implies 
\begin{equation}\label{es0.18d}
{^{t}s}\circ s = q'(1-\delta|_{A}), \qquad s\circ {^{t}s} = 
q(1-\delta'|_{A'})
\end{equation}
Restricting to $P$ and $P'$ one obtains 
\begin{equation}\label{es0.18}
{^{t}s}\circ s|_{P}=q q'\cdot id_{P}, \qquad s\circ 
{^{t}s}|_{P'} = q q'\cdot id_{P'}
\end{equation}
So $s|_{P}:P\to P'$ and ${^{t}s}|_{P'}:P'\to P$ are isogenies.
\end{proof}
\section{Coverings of type $B_n$}\label{s10}
\begin{block}\label{s0.1}
Let us consider a real vector space $\mathbb{R}^{n}$ with basis 
$\epsilon_{1},\ldots,\epsilon_{n}$ and a cup product 
$(\epsilon_{j}|\epsilon_{k})=\delta_{jk}$. Denote by $R$ the set 
\[
R = \{\pm\epsilon_{j}|j=1,\ldots,n\} \cup 
\{\pm\epsilon_{j}\pm\epsilon_{k}|1\leq j<k\leq n\} = R_{s}\cup R_{\ell}
\]
This is a root system of type $B_{n}$ with $R_s$ the set of short 
roots and $R_{\ell}$ the set of long roots. For every $\alpha\in R$ let 
$s_{\alpha}:\mathbb{R}^{n}\to \mathbb{R}^{n}$ be the reflection 
$s_{\alpha}(x)=x-\frac{2(x|\alpha)}{(\alpha|\alpha)}\alpha$. Then 
$s_{\alpha}(R)=R$. The finite group $W$ generated  by $s_{\alpha}, 
\alpha\in R$ is the Weyl group of type $B_{n}$. 
The set of long roots $R_{\ell}$ is a root system of type $D_n$ and the
 reflections with respect to long roots 
 generate a subgroup of index 2 in $W$. We will
usually denote the two Weyl groups by $W(B_n)$ and $W(D_n)$.

Consider the orbit 
$W\epsilon_{1}=R_{s}=\{\epsilon_{1},-\epsilon_{1}, \ldots , 
\epsilon_{n},-\epsilon_{n}\}$. Acting on this set, the Weyl group
$W(B_n)$ is 
identified with the permutation subgroup of $S(W\epsilon_{1})$ 
consisting of permutations which commute with $-id$ (cf \cite{B}). 
Each reflection $s_{\epsilon_{j}}$ acts as a transposition: 
$\epsilon_{j}\leftrightarrow 
-\epsilon_{j}$ and each reflection $s_{\epsilon_{j}\pm \epsilon_{k}}$ 
acts as a product of two independant transpositions.\\

Let $W=W(B_n)$. Let $Y$ be a smooth, projective, irreducible curve. Let 
$\mathfrak{D}\subset Y$ be a finite subset and let $y_{0}$ be a point 
in $Y\setminus \mathfrak{D}$. Let $m:\pi_1(Y\setminus 
\mathfrak{D},y_0) \to W$ be a homomorphism. Composing with $W\eto 
S(W\epsilon_{1})=S_{2n} $ and applying Riemann's existence theorem one 
obtains a covering 
\(
p:C\to Y
\) 
of degree $2n$ whose monodromy map
decomposes as 
\begin{equation}\label{e0.1}
\pi_1(Y\setminus \mathfrak{D},y_0) \overset{m}{\lto} W \lto 
S_{2n}
\end{equation} 
It is convenient to denote $\epsilon_{-j}=-\epsilon_{j}$ and consider 
$S_{2n}$ as the permutation group on the elements 
$\{1,-1,\ldots,n,-n\}$. Since the elements of $W$ commute with $-id$, the 
curve $C$ is 
equipped with an involution $i:C\to C$. Let $C'=C/i$ and let $\pi : C\to C'$ 
be the quotient map. One obtains the following decomposition  of $p$ 
\begin{equation} \label{e0.2}
p:C\overset{\pi}{\lto}C'\overset{g}{\lto}Y.
\end{equation}
Conversely, suppose a morphism 
$p:C\to Y$ of smooth, projective curves, can be 
decomposed as $C\overset{\pi}{\lto}C'\overset{g}{\lto}Y$ with $\deg 
\pi =2,\; \deg g =n$. Assume  $Y$ is irreducible. Let $i:C\to C$ be the 
involution with $\pi 
\circ i=\pi$. Let $\mathfrak{D}$ be the discriminant locus of $p$, let 
$y_{0}\in Y\setminus \mathfrak{D}$ and let 
$p^{-1}(y_{0})=\{x_{1},x'_{1},\ldots,x_{n},x'_{n}\}$ where 
$x'_{k}=i(x_{k})$. Denoting $x_{-j}=x'_{j}$ and identifying 
$x_{j}$ with $\epsilon_{j}$, and $x_{-j}$ with 
$-\epsilon_{j}=\epsilon_{-j}$, $j=1,\ldots,n$, we see that the 
monodromy map $\pi_1(Y\setminus \mathfrak{D},y_0) \to S_{2n}$ can be 
decomposed as in \eqref{e0.1}.

We call, for easier reference, a covering 
$p:C\overset{\pi}{\lto}C'\overset{g}{\lto}Y$ with $\deg \pi =2$\; a 
\emph{covering of type $B_n$}, or shortly a \emph{$B_n$-covering}. 
It  has \emph{simple ramification of type $B_n$} at a point $b\in 
\mathfrak{D}$,
   if the local 
monodromy at $b$ is a reflection in $W(B_n)$. If this is
a reflection $s_{\alpha}$ with $\alpha\in R_{\ell}$ (a long root), 
then $g:C'\to Y$ is simply ramified at $b$, i.e. $|g^{-1}(b)|=n-1$, and 
$\pi:C\to C'$ is unramified at $g^{-1}(b)$. If $\alpha \in R_{s}$ (a 
short root), then $g$ is unramified at $b$ and $\pi$ has one branch 
point among $g^{-1}(b)$. We say 
a ramified covering $p:C\to Y$, which can be decomposed as in
\eqref{e0.2}, is a 
\emph{simply ramified $B_n$-covering} if it has simple ramification of type 
$B_n$ at 
all discriminant points. In this case
we denote by $\mathfrak{D}=\mathfrak{D}_{s}\cup 
\mathfrak{D}_{\ell}$ the corresponding splitting.
It is clear that $p:C\overset{\pi}{\lto}C'\overset{g}{\lto}Y$ is a simply 
ramified $B_n$-covering 
 if and only if 
the covering $g:C'\to Y$ is either unramified, or  simply  
ramified,
$g(Discr(C\to C'))\cap Discr(C'\to Y)=\emptyset$,
and no two discriminant points of $\pi$ belong to the same fiber of $g$. 

The ordinary simply ramified coverings
$f:X\to Y$ of degree $n$, where the local monodromies are transpositions, may be considered
as simply ramified coverings of type $A_{n-1}$, since $W(A_{n-1})\cong S_n$. A simply ramified
$B_n$-covering $p:C\overset{\pi}{\lto}C'\overset{g}{\lto}Y$
is  an ordinary simply ramified covering if and only if $\mathfrak{D}_{\ell}=\emptyset$, i.e. 
$g:C'\to Y$ is \'{e}tale. 
\end{block}
\begin{block}\label{s0.2a}
Given $m: \pi_1(Y\setminus \mathfrak{D},y_0) \to W$  and $p:C\to Y$ as 
above, one calculates, using \eqref{es0.14}, that the correspondence 
associated with $L=Q(R), W\epsilon_{1}=\{\epsilon_{\pm 
j}|j=1,\ldots,n\}$ and $(\epsilon_{j}|\epsilon_{k})=-\delta_{jk}$ is 
obtained from the linear map $D(\ell_{j})=\ell_{-j},\: j\in [-n,n]$. Therefore 
this 
correspondence is the graph of the involution $i:C\to C$. Here 
$(1-i)JC\subset A$, where $A=Ker(Nm_{p})^{0}$, so $P(C,i)=(1-i)A$ is 
the ordinary Prym variety $P(C,C')$.
\end{block}
\begin{block}\label{s0.50}
Let $p:C\overset{\pi}{\lto}C'\overset{g}{\lto}Y$ be a covering with $\deg \pi =2$ and 
irreducible $C$. We want to calculate the monodromy group of 
$p:C\to Y$ in  case this is a simply ramified 
$B_n$-covering.
This is  
equivalent to calculating the Galois group of the field extension 
$\mathbb{C}(Y)\subset K$, where $K$ is the Galois hull of $\mathbb{C}(C)$ over $\mathbb{C}(Y)$.

Let us first recall some facts about ordinary coverings. 
Let $f:X\to 
Y$ be a covering of smooth curves of degree $n$. The curve $Y$ is 
assumed irreducible. Let $G\subset S_n$ be the monodromy group. The 
irreducibility of $X$ is equivalent to the transitivity of $G$. Let us 
recall (see \cite{Wi}) that a transitive subgroup of $S_n$ is called 
imprimitive if there is a a subset $\Phi\subset\{1,\ldots,n\}, \quad 
1<|\Phi|<n$, such that for every $g\in G$ one has either 
$g(\Phi)=\Phi$, or $g(\Phi)\cap \Phi=\emptyset$. A transitive 
group is called primitive if it is not imprimitive. It is clear that 
the monodromy group of $f:X\to Y$ is imprimitive if and only if there 
is a nontrivial decomposition $f:X\to X_1\to Y$ . We call a covering 
of smooth, irreducible curves $f:X\to Y$ primitive if no such 
decomposition exists. If $f:X\to Y$ has simple ramification, then for 
any decomposition $X\to X_1\to Y$ with $\deg(X\to X_1)>1$ the covering 
$X_1\to Y$ must be \'{e}tale. So, for a simply ramified covering of 
smooth irreducible curves $f:X\to Y$ primitivity is equivalent to the 
surjectivity of $f_{*}:\pi_{1}(X,*)\to \pi_{1}(Y,*)$. 

It is shown in 
\cite{BE}, Lemma~2.4 that a primitive subgroup of $S_n$ which contains 
a transposition equals $S_n$. So, if $f:X\to Y$ is a primitive 
covering of degree $n$ with at least one simple branching, then the 
monodromy group is $S_n$. 
\end{block}
\begin{block}\label{s0.52}
Using the notation of \S\ref{s0.1} let 
$a_j=\{\epsilon_{j},-\epsilon_{j}\}$ and denote 
$\Sigma=\{a_{1},\ldots,a_{n}\}$. Every element of $W(B_n)$ induces a 
permutation of $\Sigma$. One obtains the following exact sequence.
\begin{equation}\label{es0.52}
0\to G_{2}\to W(B_n)\to S_n\to 0
\end{equation}
where $G_{2}\cong (\mathbb{Z}/2\mathbb{Z})^n$ is the subgroup generated by the 
reflections $s_{\epsilon_{j}}, j=1,\ldots,n$. Let $G_{1}$ be the 
subgroup generated by $s_{\epsilon_{i}-\epsilon_{j}}, 1\leq i<j\leq 
n$. It maps isomorphically to $S_n$, so $W(B_n)$ is a semidirect 
product of $G_{1}$ and $G_{2}$. Furthermore $G_{1}$ has two orbits 
when acting on $\{\pm \epsilon_{j}|j=1,\ldots,n\}$:\quad 
$\Sigma_{1}=\{\epsilon_{1},\ldots,\epsilon_{n}\}$ and 
$\Sigma_{2}=\{-\epsilon_{1},\ldots,-\epsilon_{n}\}$. The conjugates of 
$G_{1}$ map surjectively to $S_{n}$ in \eqref{es0.52}. Two other 
subgroups of $W(B_n)$ which have the same property are: $W(B_n)$ 
itself; the group $W(D_n)$ generated by the reflections with respect 
to long roots $\{s_{\epsilon_{j}}\pm \epsilon_{k}|1\leq j<k\leq n\}$. 
The conjugates of the group of the next lemma is another example.
\end{block}
\begin{lem}\label{s0.52a}
Let $\sigma =-id\in W(B_n)$. Let $G=N_{W(B_n)}(G_{1})$ be the 
normalizer of $G_{1}$. Then $G=G_{1}\cup G_{1}\sigma$. The group $G$ 
acts transitively on $\{\pm \epsilon_{j}|j=1,\ldots,n\}$ and if $n\geq 
3$ the only reflections which belong to $G$ are 
$\{s_{\epsilon_{j}-\epsilon_{k}}|1\leq j<k\leq n\}$.
\end{lem}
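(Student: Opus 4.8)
The plan is to work entirely inside the semidirect-product description $W(B_n)=G_1\ltimes G_2$ from \S\ref{s0.52}, where $G_1\cong S_n$ permutes the indices $\{1,\dots,n\}$ and $G_2\cong(\mathbb{Z}/2)^n$ is generated by the sign changes $s_{\epsilon_j}$. Writing a general element of $W(B_n)$ as a pair $(\rho,\tau)$ with $\rho\in S_n$, $\tau\in(\mathbb{Z}/2)^n$, conjugation acts by $(\rho,\tau)(\sigma_1,0)(\rho,\tau)^{-1}=(\rho\sigma_1\rho^{-1},\,\text{(sign vector depending on }\rho,\tau,\sigma_1))$. First I would compute the conjugate of a transposition $s_{\epsilon_i-\epsilon_j}\in G_1$ by an arbitrary $(\rho,\tau)$ and determine for which $(\rho,\tau)$ all such conjugates land back in $G_1$ (equivalently, have trivial $G_2$-component). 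Since $n\ge 2$ the transpositions generate $S_n$, so $(\rho,\tau)$ normalizes $G_1$ iff it normalizes the set of transpositions-with-no-sign-change, and a direct bookkeeping of the sign vectors shows this forces $\tau$ to be either $0$ or the all-ones vector; the latter is exactly $\sigma=-\mathrm{id}$. This gives $G=G_1\cup G_1\sigma$. (One also checks $\sigma$ is central, so the union is indeed a group and $G_1$ has index $2$ in it.)

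Next, transitivity: $G_1\cong S_n$ already acts transitively on each of the two orbits $\Sigma_1=\{\epsilon_j\}$ and $\Sigma_2=\{-\epsilon_j\}$, and $\sigma=-\mathrm{id}$ swaps $\Sigma_1$ with $\Sigma_2$; hence $G$ is transitive on $\{\pm\epsilon_j\mid j=1,\dots,n\}$.

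Finally, the reflections in $G$. The reflections of $W(B_n)$ are the short-root reflections $s_{\epsilon_j}$ and the long-root reflections $s_{\epsilon_j\pm\epsilon_k}$. In the $(\rho,\tau)$ coordinates, $s_{\epsilon_j-\epsilon_k}$ is a transposition with zero sign vector (these lie in $G_1\subset G$), while $s_{\epsilon_j}$ and $s_{\epsilon_j+\epsilon_k}$ have nonzero sign vectors supported on $1$ resp.\ $2$ indices. Any element of $G\setminus G_1$ has the form $(\rho,\mathbf{1})$ with sign vector the all-ones vector, which for $n\ge 3$ is neither supported on $1$ index nor on $2$ indices, so it can never be one of the remaining reflections. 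Hence the only reflections in $G$ are exactly $\{s_{\epsilon_j-\epsilon_k}\mid 1\le j<k\le n\}$, which forces the hypothesis $n\ge 3$ (for $n=2$ the all-ones vector has $2$ entries and $\sigma$ itself can be a product involving $s_{\epsilon_1+\epsilon_2}$-type elements, so the statement genuinely needs $n\ge 3$).

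The main obstacle I anticipate is the sign-vector bookkeeping in the first step: one must carefully track how conjugation by $(\rho,\tau)$ twists the $G_2$-component of a transposition, and argue that requiring \emph{all} transpositions' conjugates to stay in $G_1$ is restrictive enough to pin $\tau\in\{0,\mathbf 1\}$. Once that computation is organized cleanly, transitivity and the classification of reflections are straightforward consequences of the explicit description of $G$ and of the root system.
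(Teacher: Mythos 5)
Your argument is correct, but it takes a different route from the paper's. You work in wreath-product coordinates $W(B_n)=G_2\rtimes G_1\cong(\mathbb{Z}/2)^n\rtimes S_n$ and compute the sign-vector twist under conjugation: the conjugate of a transposition $\pi$ by $(\rho,\tau)$ has sign vector $\tau+\rho\pi\rho^{-1}(\tau)$, so forcing all these to vanish pins $\tau$ to be constant, i.e.\ $\tau=0$ or $\tau=\mathbf 1$ (and since $\sigma=-\mathrm{id}$ is central, $G_1\cup G_1\sigma$ is indeed the full normalizer). This bookkeeping does close as you anticipate; the only point worth making explicit is that it suffices to conjugate a generating set of $G_1$ into $G_1$ (containment plus finiteness gives equality), and that the conjugate of a reflection is a reflection, so landing in $G_1$ means landing in $\{s_{\epsilon_j-\epsilon_k}\}$. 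The paper instead argues conceptually: an element normalizing $G_1$ must permute the two $G_1$-orbits $\Sigma_1=\{\epsilon_j\}$ and $\Sigma_2=\{-\epsilon_j\}$, and since every element of $W(B_n)$ commutes with $-\mathrm{id}$, preserving the orbits forces $g\in G_1$ while swapping them forces $g\in G_1\sigma$; the reflection statement then follows because for $n\ge 3$ no reflection interchanges $\Sigma_1$ and $\Sigma_2$. The paper's orbit argument is shorter and avoids all sign computations; your version is more explicit, is self-contained once the semidirect-product structure from \S\ref{s0.52} is fixed, and has the merit of making the failure at $n=2$ completely transparent (the all-ones vector is supported on exactly two indices, and indeed $s_{\epsilon_1+\epsilon_2}=s_{\epsilon_1-\epsilon_2}\sigma\in G_1\sigma$, consistent with the paper's remark that $N_{W(B_2)}(G_1)=W(D_2)$). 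Both proofs establish the transitivity claim identically.
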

\begin{proof}
Let $g\in N_{W(B_n)}(G_1)$. then either 
$g(\Sigma_{1})=\Sigma_{1}, g(\Sigma_{2})=\Sigma_{2}$ or 
$g(\Sigma_{1})=\Sigma_{2},g(\Sigma_{2})=\Sigma_{1}$. In the first case 
$g\in G_{1}$. In the second $g\in G_{1}\sigma$. If $n\geq 3$ no 
reflection can satisfy $s_{\alpha}(\Sigma_{1})=\Sigma_{2}$, hence the 
reflections which belong to $G_{1}$ are 
$s_{\epsilon_{j}-\epsilon_{k}}, 1\leq j<k\leq n$.
\end{proof}
\begin{lem}\label{s0.53}
Let $G\subset W(B_n)$ be a subgroup, which contains a reflection with 
respect to a long root, and the image of $G$ in $S_n$ is a primitive 
group. Then one of the following alternatives holds.
\renewcommand{\theenumi}{\roman{enumi}}
\begin{enumerate}
\item 
$G=W(B_n)$,
\item
$G=W(D_n)$,
\item
$n\geq 3$ and $G=wN_{W(B_n)}(G_1)w^{-1}$ for some $w\in W(B_n)$,
\item
$G=wG_{1}w^{-1}$ for some $w\in W(B_n)$.
\end{enumerate}
Furthermore $G$ is transitive only in cases (i) -- (iii) and in cases 
(iii) and (iv) the set of reflections in $G$ equals 
$w\{s_{\epsilon_{j}-\epsilon_k}|1\leq j<k\leq n\}w^{-1}$.
\end{lem}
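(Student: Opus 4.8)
The plan is to analyze the structure of a subgroup $G\subset W(B_n)$ whose image $\bar G\subset S_n$ under the projection \eqref{es0.52} is primitive and which contains a reflection $s_\alpha$ with $\alpha\in R_\ell$ a long root. Write $\alpha=\epsilon_i-\epsilon_j$ or $\epsilon_i+\epsilon_j$; since the image of $s_{\epsilon_i\pm\epsilon_j}$ in $S_n$ is the transposition $(ij)$, the primitive group $\bar G$ contains a transposition, so by the result of \cite{BE}, Lemma~2.4 quoted in \S\ref{s0.50} we get $\bar G=S_n$. The strategy is then to control $N=G\cap G_2$, the kernel of $G\to S_n$, which is an $S_n$-invariant (by conjugation through a set-theoretic section) subgroup of $G_2\cong(\ZZ/2\ZZ)^n$. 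First I would enumerate these invariant submodules: for $n\geq 2$ the only $S_n$-submodules of $(\ZZ/2\ZZ)^n$ are $0$, the diagonal $\Delta=\{(0,\dots,0),(1,\dots,1)\}$, the sum-zero submodule $G_2^0$, and all of $G_2$ (with small-$n$ coincidences to be noted, e.g. $\Delta\subset G_2^0$ only when $n$ is even). Because $G$ surjects onto $S_n$, it is an extension $0\to N\to G\to S_n\to 0$, and the four cases for $N$ should correspond to the four alternatives (i)--(iv): $N=G_2$ gives $G=W(B_n)$; $N=G_2^0$ gives $G=W(D_n)$ (the sum-zero condition being exactly the $D_n$ condition in these coordinates); $N=\Delta$ gives, after checking the extension is the obvious semidirect one and that $-\mathrm{id}$ normalizes a conjugate of $G_1$, case (iii), $G$ conjugate to $N_{W(B_n)}(G_1)=G_1\cup G_1\sigma$ (Lemma~\ref{s0.52a}); and $N=0$ gives a complement to $G_2$, hence a conjugate of $G_1$ by the conjugacy of complements, case (iv).

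The second part of the statement — transitivity holding only in (i)--(iii), and the reflection set in (iii),(iv) being $w\{s_{\epsilon_j-\epsilon_k}\}w^{-1}$ — I would handle as follows. For (iv): a conjugate $wG_1w^{-1}$ preserves the partition $\{w\Sigma_1,w\Sigma_2\}$ of $\{\pm\epsilon_j\}$ into two blocks of size $n$, and an element interchanging them would have to lie in $G_1\sigma$, which is not in $G_1$; hence $G_1$ (and its conjugates) is intransitive, while for (i),(ii),(iii) transitivity is either classical ($W(B_n)$, $W(D_n)$ for $n\geq 2$) or was just proved in Lemma~\ref{s0.52a} for $N_{W(B_n)}(G_1)$. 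For the reflection sets: every reflection in $W(B_n)$ is either some $s_{\epsilon_k}$ (short, image trivial in $S_n$) or some $s_{\epsilon_j\pm\epsilon_k}$ (long, image the transposition $(jk)$); in cases (iii),(iv) the kernel $N$ contains no nonzero element that is the image of a short reflection ($s_{\epsilon_k}$ corresponds to the $k$-th standard basis vector, which lies in neither $0$ nor $\Delta$ for $n\geq 3$), so $G$ contains no short reflection, and among long reflections it contains only those conjugate into $G_1$, i.e. of the form $w s_{\epsilon_j-\epsilon_k}w^{-1}$; this is exactly the reflection set computed for $N_{W(B_n)}(G_1)$ in Lemma~\ref{s0.52a}.

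The main obstacle I anticipate is the bookkeeping of $S_n$-submodules of $(\ZZ/2\ZZ)^n$ and the low-$n$ degeneracies: for $n=2$ the ``primitive'' hypothesis on $\bar G=S_2$ is vacuous and the classification partly collapses (indeed (iii) is excluded by the explicit $n\geq 3$ in the statement, consistent with $N_{W(B_2)}(G_1)$ not being a new case there), and one must be careful that $\Delta=G_2^0$ can coincide or be nested depending on the parity of $n$; also, whether $-\mathrm{id}\in G$ when $N=\Delta$ needs the relation $(1,\dots,1)\in G_2$ acting as $-\mathrm{id}$ composed with the relevant permutation. A secondary subtlety is justifying that the extension class is forced — i.e. that once $N$ is fixed, $G$ is determined up to $W(B_n)$-conjugacy — which for $N=0$ follows from conjugacy of complements in the semidirect product (all complements to an abelian normal subgroup are conjugate when $H^1(S_n,G_2)$ is suitably controlled, or more elementarily here since $G_1$ and any complement are both sections of the same split sequence and $W(B_n)$ acts transitively on such), and for $N=\Delta$ follows by pushing this down to the quotient $W(B_n)/\Delta$. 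Once these module-theoretic and cohomological points are pinned down, assembling the four cases and reading off transitivity and the reflection sets is routine.
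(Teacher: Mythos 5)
Your route is genuinely different from the paper's. The paper argues directly: after conjugating so that $s_{\epsilon_1-\epsilon_2}\in G$, it uses elements of $G$ covering suitable transpositions to conjugate further until $G\supset G_1$, and then splits into cases according to whether $G$ contains a pair $s_{\epsilon_i-\epsilon_j},s_{\epsilon_i+\epsilon_j}$ (which forces $W(D_n)\subset G$) or not (which forces $G\subset N_{W(B_n)}(G_1)$). You instead classify the $S_n$-submodule $N=G\cap G_2$ of $G_2\cong(\ZZ/2\ZZ)^n$ and try to read off $G$ from $N$. The submodule enumeration $\{0,\Delta,G_2^0,G_2\}$ is correct, and your treatment of transitivity and of the reflection sets in cases (iii), (iv) is fine.

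However, there is a genuine gap at the step where you claim that $N$ determines $G$ up to conjugacy. This is false, and the specific justifications you offer do not hold. For $N=G_2^0$: the subgroup $\ker(\det)\subset W(B_n)$ also satisfies $\ker(\det)\cap G_2=G_2^0$ and surjects onto $S_n$ (lift $(ij)$ to $s_{\epsilon_i}s_{\epsilon_i-\epsilon_j}$), yet it is not $W(D_n)$ and, both being normal of index $2$, the two are not conjugate; so ``$N=G_2^0$ gives $G=W(D_n)$'' does not follow from the module computation alone. For $N=0$: it is not true that all complements of $G_2$ in $W(B_n)$ are conjugate. One has $H^1(S_n,(\ZZ/2\ZZ)^n)\cong H^1(S_{n-1},\ZZ/2\ZZ)\cong\ZZ/2\ZZ$ by Shapiro's lemma for $n\geq 3$, so there are exactly two conjugacy classes of complements; a representative of the second class is $\{(-\mathrm{id})^{\varepsilon(\sigma)}\iota(\sigma)\mid\sigma\in S_n\}$, where $\iota:S_n\to G_1$ is the standard section and $\varepsilon$ is the sign (the element $-s_{\epsilon_i-\epsilon_j}$ has $n-1$ eigenvalues equal to $-1$, hence is not a reflection for $n\geq 3$, so this complement is not conjugate to $G_1$). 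The analogous ambiguity occurs for $N=\Delta$. In every one of these ``twisted'' cases the offending group contains no reflection at all, so they are excluded by your standing hypothesis that $G$ contains a long reflection --- but your write-up never re-invokes that hypothesis after using it once to get a transposition in $\bar G$. The proof can be repaired by doing so (e.g.\ by analyzing the subgroup generated by the long reflections of $G$, which is where the extension class gets pinned down), but at that point the argument essentially reduces to the paper's direct conjugation argument, which avoids the extension-theoretic bookkeeping altogether.
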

\begin{proof}
The image $\overline{G}$ of $G$ in $S_n$ is a primitive subgroup which 
contains a transposition. So, $\overline{G}=S_n$ by \cite{BE}, Lemma~2.4. 
By hypothesis $G$ contains some $s_{\alpha}, \alpha \in R_{\ell}$. Let 
$w_{1}\in W(B_n), w_{1}(\alpha)=\epsilon_{1}-\epsilon_{2}$. Substituting 
$G$ by $w_{1}Gw_{1}^{-1}$ we may assume that 
$s_{\epsilon_{1}-\epsilon_{2}}\in G$. Let $h\in S_n$ be a permutation 
with $h(1)=2,h(2)=3$. Let $g\in G,\; \overline{g}=h$. Then 
$gs_{\epsilon_1-\epsilon_2}g^{-1}$ is either 
$s_{\epsilon_2-\epsilon_3}$ or $s_{\epsilon_2+\epsilon_3}$. If the 
latter case occurs we replace $G$ by 
$s_{\epsilon_{3}}Gs_{\epsilon_{3}}$ and obtain 
$s_{\epsilon_2-\epsilon_3}\in G$. Repeating this argument with $(34), 
(45)$ etc. we obtain that replacing $G$ by some $wGw^{-1}$ we may 
assume that $G\supset \langle 
s_{\epsilon_1-\epsilon_2},\ldots,s_{\epsilon_{n-1}-\epsilon_n}\rangle 
= G_{1}$. Suppose $G$ contains the pair $s_{\epsilon_i-\epsilon_j}, 
s_{\epsilon_i+\epsilon_j}$ for some $i\neq j$. Let $1\leq k<\ell\leq 
n$. Let $g\in G$ be an element such that the permutation 
$\overline{g}\in S_n$ transforms $i\mapsto k, j\mapsto \ell$. Then 
$g\{s_{\epsilon_i-\epsilon_j},s_{\epsilon_i+\epsilon_j}\}s^{-1} = 
\{s_{\epsilon_k-\epsilon_{\ell}},s_{\epsilon_k+\epsilon_{\ell}}\}$. 
Therefore $G$ contains $s_{\beta}$ for $\forall \beta \in R_{\ell}$. 
Hence $G=W(B_n)$ or $G=W(D_n)$. If no pair 
$\{s_{\epsilon_i-\epsilon_j}, s_{\epsilon_i+\epsilon_j}\}$ is 
contained in $G$, then the only reflections contained in $G$ are 
$\{s_{\epsilon_i-\epsilon_j}|1\leq i<j\leq n\}$. Hence if $g\in G$, 
then for $\forall i\neq j$ one has 
$gs_{\epsilon_i-\epsilon_j}g^{-1}=s_{\epsilon_k-\epsilon_{\ell}}$ for 
some $k\neq \ell$. Therefore $G\subset N_{W(B_n)}(G_1)$. Since 
$|N_{W(B_n)}(G_1):G_{1}|=2$ one has that either $G=G_{1}$ or 
$G=N_{W(B_n)}(G_1)$. If $n=2$, then $N_{W(B_2)}(G_1)=W(D_2)$. If 
$n\geq 3$ we apply Lemma~\ref{s0.52a}.
\end{proof}
\begin{pro}\label{s0.54}
Let $p:C\overset{\pi}{\lto}C'\overset{g}{\lto}Y$ be a $B_n$-covering 
with irreducible $C$. Suppose $g:C'\to Y$ is primitive (cf. 
\S\ref{s0.50}). Furthermore suppose that $g:C'\to Y$ is ramified, in one of the branch 
points $b\in \mathfrak{D}\subset Y$ it is simply 
ramified and $\pi$ is unramified in $g^{-1}(b)$. Then either 
$G=W(B_n)$, or $G=W(D_n)$, or $G$ is conjugated to $N_{W(B_n)}(G_1)$ 
in case $n\geq 3$. The latter case happens if and only if $p:C\to Y$ 
fits into a commutative diagram
\begin{equation} \label{es0.54} \xymatrix{ 
& C \ar[dl]_{\pi}\ar[dr]^{f} &\\ 
C' \ar[dr]_{g} & & \tilde{Y} \ar[dl]^{h} \\ 
& Y  }
\end{equation}
where $h:\tilde{Y}\to Y$ is a covering of degree 2.
\end{pro}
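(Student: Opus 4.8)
The plan is to reduce the trichotomy to Lemma~\ref{s0.53} and then to read off the middle term of the diagram from the block systems of $W(B_n)$ and $W(D_n)$ on the fibre of $p$.

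\emph{The trichotomy.} First I would check that the local monodromy $\tau_b\in W=W(B_n)$ at the distinguished branch point $b$ is a reflection with respect to a long root, so that Lemma~\ref{s0.53} applies to $G$. Since $g$ is simply ramified at $b$, the image $\overline{\tau}_b\in S_n$ is a transposition $(ij)$; hence $\tau_b$ fixes each pair $a_l=\{\epsilon_l,-\epsilon_l\}$ with $l\neq i,j$ and interchanges $a_i$ with $a_j$. Passing to $\langle\tau_b\rangle$-orbits on $\{\epsilon_{\pm k}\}$ and on $\Sigma$, the hypothesis that $\pi$ is unramified over every point of $g^{-1}(b)$ says precisely that no cycle of $\tau_b$ contains a pair $\{\epsilon_l,-\epsilon_l\}$; this rules out any factor $s_{\epsilon_l}=(l\,-l)$, so $\tau_b$ is $(i\,j)(-i\,-j)=s_{\epsilon_i-\epsilon_j}$ or $(i\,-j)(-i\,j)=s_{\epsilon_i+\epsilon_j}$, a long-root reflection. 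Now $G$ contains a long-root reflection and its image $\overline{G}$ in $S_n$ (the monodromy group of $g$) is primitive, so Lemma~\ref{s0.53} yields alternatives (i)--(iv); since $C$ is irreducible $G$ is transitive, which by the last sentence of Lemma~\ref{s0.53} excludes (iv), and (iii) forces $n\geq 3$.

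\emph{Case (iii) $\Rightarrow$ the diagram.} Assume $G=wN_{W(B_n)}(G_1)w^{-1}$ with $n\geq 3$; after renumbering $p^{-1}(y_0)$ (i.e. conjugating) I may take $G=N_{W(B_n)}(G_1)=G_1\cup G_1\sigma$ with $\sigma=-\mathrm{id}$, by Lemma~\ref{s0.52a}. By \S\ref{s0.52} the subgroup $G_1$ preserves each of the two $n$-element sets $\Sigma_1=\{\epsilon_1,\ldots,\epsilon_n\}$ and $\Sigma_2=\{-\epsilon_1,\ldots,-\epsilon_n\}$, while $\sigma$ interchanges them; hence $\{\Sigma_1,\Sigma_2\}$ is a $G$-invariant block system with $G_1$ the stabiliser of a block. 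Composing $m$ with the $G$-action on $\{\Sigma_1,\Sigma_2\}$ produces a covering $h:\tilde Y\to Y$ of degree $2$ (degree exactly $2$ because $\sigma\notin G_1$, with $\tilde Y$ irreducible since that action is transitive) together with a factorisation $C\overset{f}{\to}\tilde Y\overset{h}{\to}Y$ of $p$ in which $\deg f=n$; combined with the given factorisation $C\overset{\pi}{\to}C'\overset{g}{\to}Y$ this is diagram~\eqref{es0.54}.

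\emph{The diagram $\Rightarrow$ case (iii).} Conversely, if $p$ fits into \eqref{es0.54}, then the fibre of $h$ over $y_0$ gives a $G$-invariant partition of $\{\epsilon_{\pm k}\}$ into two blocks of size $n$. By the trichotomy $G$ is $W(B_n)$, $W(D_n)$, or conjugate to $N_{W(B_n)}(G_1)$ with $n\geq 3$, so it suffices to show that for $n\geq 3$ neither $W(B_n)$ nor $W(D_n)$ admits such a block system. This is elementary: a block through $\epsilon_1$ is a union of orbits of $\mathrm{Stab}(\epsilon_1)$, which equals $W(B_{n-1})$ (resp. $W(D_{n-1})$) acting on $\{\pm\epsilon_2,\ldots,\pm\epsilon_n\}$ and fixing $\pm\epsilon_1$; its orbits on the remaining $2n-1$ points are $\{-\epsilon_1\}$ and $\{\pm\epsilon_2,\ldots,\pm\epsilon_n\}$ (using transitivity of $W(B_{n-1})$, resp. of $W(D_{n-1})$ for $n\geq 3$), so the only such unions of cardinality dividing $2n$ are $\{\epsilon_1\}$, $\{\epsilon_1,-\epsilon_1\}$ and the whole set. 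Thus the only nontrivial block system is $\{a_1,\ldots,a_n\}$, with $n$ blocks of size $2$; for $n\geq 3$ this is not a partition into two blocks of size $n$, so $G$ must be conjugate to $N_{W(B_n)}(G_1)$.

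\emph{Main obstacle.} The one step needing genuine care is the dictionary in the first paragraph between ``$\pi$ unramified over $g^{-1}(b)$'' and the cycle structure of $\tau_b$; everything else is bookkeeping with Weyl groups, modulo the harmless degeneration at $n=2$ (where case (iii) collapses into case (ii) since $N_{W(B_2)}(G_1)=W(D_2)$), which is why the statement restricts the third alternative to $n\geq 3$.
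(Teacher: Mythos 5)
Your proof is correct, and its overall skeleton (reduce to Lemma~\ref{s0.53}, then identify $\tilde{Y}$ with the quotient by the block system $\{\Sigma_1,\Sigma_2\}$) matches the paper's. Two points of comparison are worth recording. First, your opening paragraph makes explicit something the paper's proof leaves implicit, namely the verification that the hypotheses ``$g$ simply ramified at $b$'' and ``$\pi$ unramified over $g^{-1}(b)$'' force the local monodromy $\tau_b$ to be a long-root reflection; your dictionary (a point of $C$ over $g^{-1}(b)$ is a ramification point of $\pi$ iff the corresponding $\tau_b$-orbit contains a pair $\{\epsilon_l,-\epsilon_l\}$) is exactly the right one and also rules out the $4$-cycle $(\epsilon_i,\epsilon_j,-\epsilon_i,-\epsilon_j)$, which is the only other lift of a transposition one has to worry about. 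Second, in the converse direction you genuinely diverge from the paper: the paper re-uses the primitivity of $g$ to show that $f(i\,x)=f(x)$ is impossible, deduces $f\circ i=\tau\circ f$, and then renumbers the fibre so that $G\subset N_{W(B_n)}(G_1)$; you instead take the trichotomy as given and eliminate $W(B_n)$ and $W(D_n)$ by classifying their block systems via the orbits of the point stabilizers $W(B_{n-1})$ and $W(D_{n-1})$. Your route is purely group-theoretic and arguably cleaner in that it isolates exactly where $n\geq 3$ enters (transitivity of $W(D_{n-1})$ on $2(n-1)$ letters, and the exclusion of the size-$2$ block $\{\epsilon_1,-\epsilon_1\}$), while the paper's route is more geometric and produces the explicit identification of $i$ with the deck transformation $\tau$, which is what makes the renumbering of $p^{-1}(y_0)$ possible. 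Both arguments use the primitivity hypothesis only through the trichotomy in your version, and once more directly in the paper's version; either way the logic is sound.
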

\begin{proof}
The possible alternatives for $G$ follow from Lemma~\ref{s0.53}. In 
case (iii), renumbering the fiber $p^{-1}(y_{0})$, one may assume 
$G=N_{W(B_n)}(G_1)$. Then one has a commutative diagram of 
$G$-equivariant maps
\begin{equation}\label{es0.55} \xymatrix{
 &  \{\pm \epsilon_j|1\leq j\leq n\} \ar[dl]\ar[dr]  & \\  
\{a_j|1\leq j\leq n\} \ar[dr] & &  \{\Sigma_1,\Sigma_2\} \ar[dl]  \\   
&  \star    }
\end{equation}
This yields \eqref{es0.54}. Viceversa, suppose \eqref{es0.54} holds 
for some $f:C\to \tilde{Y}$ and $h:\tilde{Y}\to Y$. Let 
$\tau:\tilde{Y}\to \tilde{Y}$ be the involution such that $h\circ \tau 
= h$. For every $x\in C$ one has $h\circ f(x)=h\circ f(i\, x)$ since 
$h\circ f=g\circ \pi$. It is impossible that $f(i\, x)=f(x)$ for 
$\forall x \in C$, since this would imply a decomposition $C'\to 
\tilde{Y}\to Y$, while by hypothesis $C'\to Y$ is primitive. Therefore 
$f(i\, x)=\tau f(x)$. Now, one can number the points of 
$p^{-1}(y_{0})$ so that a diagram \eqref{es0.55}, with maps commuting 
with the monodromy action, takes place. Hence $G$ is conjugated to 
$N_{W(B_n)}(G_1)$. 
\end{proof}
When $g:C'\to Y$ is simply ramified, primitivity is equivalent to the 
surjectivity of $g_{*}:\pi_{1}(C',*)\to \pi_{1}(Y,*)$. So, we obtain 
the following corollary.
\begin{cor}\label{s0.56}
Let $p:C\overset{\pi}{\lto}C'\overset{g}{\lto}Y$ be a simply ramified 
$B_n$-covering with irreducible $C$. Assume $g_{*}:\pi_{1}(C',*)\to 
\pi_{1}(Y,*)$ is 
surjective. Let $G$ with $G\subset W(B_n)\subset S_{2n}$ be the 
monodromy group of the covering $p:C\to Y$. 

If $\mathfrak{D}_{s}\neq \emptyset$, then $G=W(B_n)$. 

If $\mathfrak{D}_{s}=\emptyset$, then 
one of the following alternatives holds: either $G=W(B_n)$; or $G=W(D_n)$; or 
$G$ is conjugate to $N_{W(B_n)}(G_1)$ in case $n\geq 3$. The latter 
alternative holds if and only if 
$p:C\overset{\pi}{\lto}C'\overset{g}{\lto}Y$ fits into a commutative 
diagram \eqref{es0.54}  with \'{e}tale $\tilde{Y}\to Y$. 
\end{cor}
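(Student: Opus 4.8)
The plan is to obtain the corollary by feeding Proposition~\ref{s0.54} with the primitivity criterion recalled just before the statement, and then pruning its list of possibilities by means of Lemma~\ref{s0.53} and Lemma~\ref{s0.52a}; first, though, I would peel off the case in which $g:C'\to Y$ is \'etale. If $\mathfrak{D}_{\ell}=\emptyset$ then every local monodromy of $p$ is a short-root reflection $s_{\epsilon_j}$, which fixes each pair $a_k=\{\epsilon_k,-\epsilon_k\}$, so $g$ is unramified; since $C$, hence $C'$, is irreducible, $g_{*}:\pi_1(C',*)\to\pi_1(Y,*)$ is then injective with image of index $\deg g=n$, and the hypothesis that it be surjective forces $n=1$. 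In that case $p=\pi:C\to Y$ is a ramified double cover of irreducible curves, so $G\cong\mathbb{Z}/2\mathbb{Z}=W(B_1)$ — the asserted conclusion, since here $\mathfrak{D}_{s}=\mathfrak{D}\neq\emptyset$. From now on $\mathfrak{D}_{\ell}\neq\emptyset$, so $g:C'\to Y$ is simply ramified and, by the recalled criterion, the surjectivity of $g_{*}$ is equivalent to the primitivity of $g$.

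Now I would apply Proposition~\ref{s0.54} at a point $b\in\mathfrak{D}_{\ell}$: by the local analysis of \S\ref{s0.1}, $g$ is simply ramified at $b$ and $\pi$ is unramified over $g^{-1}(b)$, so the proposition gives that $G$ equals $W(B_n)$, or $W(D_n)$, or (if $n\geq3$) a conjugate of $N_{W(B_n)}(G_1)$, the last alternative being equivalent to the existence of a commutative diagram~\eqref{es0.54} with $\deg h=2$. Suppose $\mathfrak{D}_{s}\neq\emptyset$; then some local monodromy is a short-root reflection $s_{\epsilon_j}$. As a signed permutation $s_{\epsilon_j}$ effects a single sign change, so $s_{\epsilon_j}\notin W(D_n)$; and by the last sentence of Lemma~\ref{s0.53} the reflections in a conjugate of $N_{W(B_n)}(G_1)$ are conjugates of the long-root reflections $s_{\epsilon_i-\epsilon_j}$, hence themselves long, since conjugation in $W(B_n)$ preserves the norm of a root. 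Thus the only surviving possibility is $G=W(B_n)$, which is the first assertion of the corollary.

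Finally, when $\mathfrak{D}_{s}=\emptyset$ the three alternatives for $G$ are precisely those in the statement, and all that remains is to sharpen ``$\deg h=2$'' to ``$h$ \'etale'' in the third. If $G$ is conjugate to $N_{W(B_n)}(G_1)$, then after renumbering $p^{-1}(y_0)$ by an element of $W(B_n)$ I may assume $G=N_{W(B_n)}(G_1)=G_1\cup G_1\sigma$ with $\sigma=-\mathrm{id}$, in which case the monodromy of $h:\tilde{Y}\to Y$ in~\eqref{es0.54} is the composite $\pi_1(Y\setminus\mathfrak{D},y_0)\to G\to S(\{\Sigma_1,\Sigma_2\})\cong S_2$ that kills $G_1$. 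Since $\mathfrak{D}_{s}=\emptyset$, every local monodromy of $p$ is a long-root reflection lying in $G$, hence by Lemma~\ref{s0.52a} (here the hypothesis $n\geq3$ is used) one of the $s_{\epsilon_i-\epsilon_j}\in G_1$; so every local monodromy of $h$ is trivial and $h$ is \'etale. Conversely, diagram~\eqref{es0.54} with $h$ \'etale of degree $2$ has in particular $\deg h=2$, so Proposition~\ref{s0.54} returns that $G$ is conjugate to $N_{W(B_n)}(G_1)$. I do not anticipate any real difficulty; the one delicate point is this last equivalence, and it is exactly where $n\geq3$ and Lemma~\ref{s0.52a} are needed — note that for $n=2$ one has $N_{W(B_2)}(G_1)=W(D_2)$, so that case is already subsumed under $G=W(D_n)$.
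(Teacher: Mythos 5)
Your proposal is correct and follows essentially the same route as the paper: reduce to $\mathfrak{D}_{\ell}\neq\emptyset$ so that Proposition~\ref{s0.54} applies at a long branch point, eliminate $W(D_n)$ and the conjugates of $N_{W(B_n)}(G_1)$ when $\mathfrak{D}_s\neq\emptyset$ because they contain no short-root reflections, and use Lemma~\ref{s0.52a} to see that long-root local monodromies in $N_{W(B_n)}(G_1)$ fix $\{\Sigma_1,\Sigma_2\}$, so $h$ is \'etale. Your extra care with the $n=1$ corner case and with the observation that conjugation preserves root length only makes explicit what the paper leaves implicit.
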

\begin{proof}
$\mathfrak{D}_{\ell}\neq \emptyset$ since otherwise 
$g_{*}:\pi_{1}(C',*)\to \pi_{1}(Y,*)$ would not be surjective. The 
subgroups $W(D_n)$ and $N_{W(B_n)}(G_1)$ do not contain reflections 
with respect to short roots $s_{\epsilon_{j}}$, so only 
$G=W(B_n)$ is possible if $\mathfrak{D}_{s}\neq \emptyset$. Let 
$\mathfrak{D}_{s}=\emptyset$. By Lemma~\ref{s0.52a} the reflections 
which belong to $N_{W(B_n)}(G_1)$ do not interchange $\Sigma_{1}$ and 
$\Sigma_{2}$. Therefore, if $G$ is conjugate to $N_{W(B_n)}(G_1)$, the 
covering $\tilde{Y}\to Y$ is \'{e}tale.
\end{proof}
\begin{cor}\label{s0.57}
Let $p:C\overset{\pi}{\lto}C'\overset{g}{\lto}\mathbb{P}^1$ be a 
simply ramified $B_n$-covering with irreducible $C$. Let $G\subset 
W(B_n)\subset S_{2n}$ be 
the monodromy group of $p:C\to \mathbb{P}^1$. 

If $\mathfrak{D}_{s}\neq \emptyset$, then $G=W(B_n)$. 
If $\mathfrak{D}_{s}=\emptyset$, then $G=W(D_n)$. 
\end{cor}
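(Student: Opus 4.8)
The plan is to apply Corollary~\ref{s0.56} with $Y \cong \mathbb{P}^1$ and to use the fact that $\pi_1(\mathbb{P}^1,*) = 1$, which is trivial. First I would observe that when $Y \cong \mathbb{P}^1$, the map $g_*:\pi_1(C',*) \to \pi_1(Y,*)$ is automatically surjective, since the target is the trivial group. Hence the hypotheses of Corollary~\ref{s0.56} are satisfied, and we may invoke its conclusion: if $\mathfrak{D}_s \neq \emptyset$ then $G = W(B_n)$, while if $\mathfrak{D}_s = \emptyset$ then $G$ is one of $W(B_n)$, $W(D_n)$, or a conjugate of $N_{W(B_n)}(G_1)$ (the latter only for $n \geq 3$).

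The first assertion is then immediate. For the second assertion, the point is to rule out $G = W(B_n)$ and $G$ conjugate to $N_{W(B_n)}(G_1)$ in the case $\mathfrak{D}_s = \emptyset$. Ruling out $W(B_n)$ is straightforward: the monodromy group of a simply ramified $B_n$-covering is generated by its local monodromies, which are reflections with respect to long roots when $\mathfrak{D}_s = \emptyset$; since these reflections all lie in $W(D_n)$, we get $G \subseteq W(D_n)$, so $G = W(B_n)$ is impossible. The remaining task is to exclude the conjugate of $N_{W(B_n)}(G_1)$. By the last clause of Corollary~\ref{s0.56}, this alternative holds if and only if $p$ fits into a commutative diagram \eqref{es0.54} with $h:\tilde{Y} \to Y$ \'{e}tale of degree $2$; but $Y \cong \mathbb{P}^1$ admits no connected \'{e}tale double cover, so this case cannot occur (here one uses that $\tilde{Y}$ is connected, which follows from $C$ being irreducible and mapping onto $\tilde{Y}$). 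Therefore $G = W(D_n)$ is the only surviving possibility.

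The main obstacle, such as it is, is really just bookkeeping: making sure that the connectedness of $\tilde{Y}$ is legitimately inferred from the irreducibility of $C$ (so that "no \'{e}tale double cover of $\mathbb{P}^1$" genuinely applies), and checking that $\mathfrak{D}_\ell \neq \emptyset$ in the case $\mathfrak{D}_s = \emptyset$ — for otherwise $p$ would be unramified and $C$ would be a disconnected cover of $\mathbb{P}^1$, contradicting irreducibility — so that $G \neq 1$ and the group $W(D_n)$ is genuinely attained rather than a proper subgroup. Both of these are handled exactly as in the proof of Corollary~\ref{s0.56}, so the argument is short. I would write it as a two-line deduction from Corollary~\ref{s0.56} together with the observation that $\mathbb{P}^1$ is simply connected.
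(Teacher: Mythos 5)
Your proposal is correct and follows exactly the route the paper intends: the paper states Corollary~\ref{s0.57} without proof as an immediate consequence of Corollary~\ref{s0.56}, using that $g_*$ is automatically surjective onto $\pi_1(\mathbb{P}^1)=1$, that $\mathbb{P}^1$ has no connected \'{e}tale double cover (excluding the $N_{W(B_n)}(G_1)$ alternative), and that for $Y\cong\mathbb{P}^1$ the monodromy group is generated by the local monodromies, which lie in $W(D_n)$ when $\mathfrak{D}_s=\emptyset$ (excluding $W(B_n)$, cf.\ Remark~\ref{ptv1.1}). Your additional bookkeeping on the connectedness of $\tilde{Y}$ and on $\mathfrak{D}_\ell\neq\emptyset$ is sound and matches what the paper's argument implicitly requires.
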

\begin{rem}\label{ptv1.1}
We notice that when $g(Y)\geq 1$ the monodromy group of $p:C\overset{\pi}{\lto}C'\overset{g}{\lto}Y$
might very well be $W(B_n)$, even when $\pi :C\to C'$ is \'{e}tale. The fundamental group 
$\pi_1(Y\setminus \mathfrak{D},y_0)$ is generated by $\gamma_1,\ldots,\gamma_n,\alpha_1,\beta_1,\ldots,\alpha_g,\beta_g$,
where $\gamma_1,\ldots,\gamma_n$ are homotopy classes of loops encircling the branch points of $p:C\to Y$, with the only relation
\begin{equation}\label{eptv1.1}
\gamma_1\cdots\gamma_n = [\alpha_1,\beta_1]\cdots [\alpha_g,\beta_g].
\end{equation}
When $\pi:C \to C'$ is \'{e}tale the monodromy map $m:\pi_1(Y\setminus \mathfrak{D},y_0)\to W(B_n)$ has the property that $m(\gamma_i)\in W(D_n)$ for $\forall i=1,\ldots,n$ (cf. \cite{K}, pp.179,180), but the only restriction for 
$m(\alpha_j),m(\beta_j)\in W(B_n)$ comes from the relation \eqref{eptv1.1}. Reversing and applying Riemann's existence theorem it is easy to construct, when $g(Y)\geq 1$, coverings of type $B_n$ with \'{e}tale $\pi:C \to C'$ and full monodromy group  $W(B_n)$.
\end{rem}
\begin{dfn}\label{s0.57a}
Let $p:C\overset{\pi}{\lto}C'\overset{g}{\lto}Y$  be a covering with $\deg(\pi)= 2$ and 
irreducible $C$.
We say it is a {\it simple 
$B_n$-covering} if:
\renewcommand{\theenumi}{\roman{enumi}}
\begin{enumerate}
\item 
$p$ is simply ramified $B_n$-covering and both $\pi$ and $g$ are ramified;
\item 
$g:C'\to Y$ is primitive, or equivalently $g_{*}:\pi_{1}(C',*)\to 
\pi_{1}(Y,*)$ is surjective;
\end{enumerate}
\end{dfn}
\begin{rem}\label{ptv1.2}
Notice that if $\deg(g)$ is prime, then $g:C'\to Y$ is primitive. So in 
this case the simply ramified $B_n$-coverings with $\mathfrak{D}_s\neq \emptyset$ and
$\mathfrak{D}_{\ell}\neq \emptyset$
 are simple $B_n$-coverings. The same statement holds when $\deg(g)$ is arbitrary and
  $Y\cong \mathbb{P}^1$.
 By Corollary~\ref{s0.56} every simple 
$B_n$-covering has full monodromy group $W(B_n)$.
\end{rem}
\begin{block}\label{s0.3}
The short root $\epsilon_{1}$ is the fundamental weight $\omega_{1}$ 
of the root system $R$ of type $B_{n}$ (cf. \cite{B}). Let us consider 
the fundamental weight $\omega_{n}=\frac{1}{2}(\epsilon_{1}+\cdots 
+\epsilon_{n})$. This is the dominant weight of 
the spinor representation of the Lie algebra
 $so(2n+1)$. We call $\omega_n$ the spinor weight. One has 
\begin{equation}\label{es0.3a}
W\omega_{n} = \{\lambda_{A}=\frac{1}{2}(\sum_{j\notin 
A}\epsilon_{j}-\sum_{j\in A}\epsilon_{j})\;|\;A\subset\{1,\ldots,n\}\}
\end{equation}
Let $m:\pi_1(Y\setminus \mathfrak{D},y_0) \to W\subset S_{2n}$ be the 
monodromy map of the covering 
$p:C\overset{\pi}{\lto}C'\overset{g}{\lto}Y$ as in \S\ref{s0.1}.
Composing $m$ with the permutation representation $W\to 
S(W\omega_{n})$ and applying Riemann's existence theorem one obtains a 
smooth, projective curve $X$ and a
covering $f:X\to Y$ of degree $2^{n}$. Let us fix a bijection 
$f^{-1}(y_0)\overset{\sim}{\lto}W\omega_{n}$. If $y\in Y\setminus 
\mathfrak{D}$ and $\gamma$ is a path in $Y\setminus \mathfrak{D}$ 
which connects $y$ with $y_0$, one enumerates the points of $f^{-1}(y)$ 
using covering homotopy along $\gamma$ and the fixed bijection. 
Thus $f^{-1}(y)=\{x_{A}|A\subset \{1,\ldots,n\}\}$ in correspondence with \eqref{es0.3a}.
 
Calculating the action of 
$s_{\epsilon_{j}}$ and $s_{\epsilon_{j}\pm \epsilon_{k}}$ on 
$W\omega_{n}$ one obtains that if 
$p:C\overset{\pi}{\lto}C'\overset{g}{\lto}Y$ 
is a simply ramified $B_n$-covering, then the local monodromy of $f:X\to Y$ is 
a product 
of $2^{n-1}$ independent transpositions at a point 
$b\in \mathfrak{D}_{s}$ and a product of 
$2^{n-2}$ independent transpositions at a point $b\in \mathfrak{D}_{\ell}$.

One calculates a correspondence $D$ as in \S\ref{s0.14} letting 
$L=Q(R)=\oplus_{j=1}^{n}\mathbb{Z}\epsilon_{j}$, 
$(\epsilon_{j}|\epsilon_{k})=-2\delta_{jk}$ and $\lambda=\omega_{n}$. 
It is shown in \cite{K}, Section~8.8 that one has for $D$ the formula
\begin{equation} \label{eqn15}
D(x_{A}) = \sum_{B\neq A}(|A|+|B|-2|A\cap B|-1)x_{B}
\end{equation}
and furthermore $q=2^{n-1}$ and $\deg D = 2^{n-1}(n-2)+1$. Let 
$\delta:JX\to JX$ be the endomorphism induced by $D$, let 
$$
A=Ker(Nm_{f}:JX\to JY)^{0} \quad \mbox{and} \quad P(X,\delta)=(1-\delta)A.
$$  
Applying Proposition~\ref{s0.16} to $W=W(B_n),\lambda = \omega_{n}$ 
and $\lambda'=\omega_{1}$, one obtains that $P(X,\delta)$ and 
$P(C,i)=P(C,C')$ are isogenous. In the next paragraph we want to 
describe this isogeny more explicitely.
\end{block}
\begin{block}\label{s0.21}
The correspondence $S\in Div(X\times C)$ which establishes the isogeny 
between $P(X,\delta)$ and $P(C,C')$ is constructed in \cite{K}, 
Section~8.8. We recall this briefly. One considers the bilinear form 
$(\epsilon_{j}|\epsilon_{k})=-2\delta_{jk}$. Here 
\[
\mathbb{R}^{d}=\oplus_{A\subset\{1,\ldots,n\}}\mathbb{R}e_{A},\qquad 
\mathbb{R}^{e} = \oplus_{j=1}^{n}(\mathbb{R}f_{j}\oplus 
\mathbb{R}f_{-j})
\]
One has 
\[
S(e_{A}) = 2S_{0}(e_{A}) + nT(e_{A})
\]
where
\[
S_{0}(e_{A}) = \sum_{j\notin A}f_{-j} + \sum_{j\in A}f_{j}, \qquad 
T(e_{A}) = \sum_{j=1}^{n}(f_{j}+f_{-j})
\]
Since $S$ and $T$ are $W$-equivariant, so is $S_{0}$. The construction 
of \S\ref{s0.11} yields correspondences in $Div(X\times C)$, which 
abusing notation we denote again by $S,S_{0}$ and $T$. One has for 
every $y\in Y\setminus \mathfrak{D}$ that 
$f^{-1}(y)=\{z_{A}|A\subset\{1,\ldots,n\}\}$, 
$p^{-1}(y)=\{x_{j},x'_{j}|j=1,\ldots,n\}$, with 
$x'_{j}=i(x_{j}):=x_{-j}$, and $S(z_{A})=2S_{0}(z_{A})+nT(z_{A})$, 
where
\[
S_{0}(z_{A}) = \sum_{j\notin A}x'_{j} + \sum_{j\in A}x_{j}, \qquad 
T(z_{A})=p^{*}(f(z_{A})).
\]
Replacing $S$ by $-S$ one has 
$-S(z_{A})=2S_{1}(z_{A})-(n+1)T(z_{A})$, where
\[
S_{1}(z_{A}) = \sum_{j\notin A}x_{j} + \sum_{j\in A}x'_{j}
\]
We may use the correspondence $S_{1}$ in order to fit the covering $f:X\to Y$
 into the 
following commutative diagram:
\begin{equation}\label{es0.3b}
\begin{CD}
X         @>S_{1}>>         C^{(n)}\\
@VfVV                   @VV\pi^{(n)}V\\
Y        @>g^*>>       C'^{(n)}
\end{CD}
\end{equation}
Here $g^*$ denotes the map associating to a point $y \in Y$ the whole fibre 
$g^{-1}(y)$ 
considered as a point of the $n$-fold symmetric product $C'^{(n)}$ of $C'$.
It is clear that $S_{1}: X\to C^{(n)}$ is generically injective.
We will later need some properties of $S_{0}$ which we now prove.
\end{block}
\renewcommand{\theenumi}{\roman{enumi}}
\begin{lem}\label{s0.23}
Let $s_{0}:JX\to JC$ and ${^{t}s}_{0}:JC\to JX$ be the homomorphisms 
induced by $S_{0}$ and ${^{t}S}_{0}$ respectively. Let $T_{1}\in 
Div(X\times X)$, $T\in Div(X\times C)$ and $T_{2}\in Div(C\times C)$ be 
the trace correspondences (cf. \S\ref{s0.11}). Then 
\begin{enumerate}
\item 
$s_{0}$ and ${^{t}s}_{0}$ induce homomorphisms 
$s_{0}:Ker(Nm_{f})^{0}\to P(C,C')$, 
${^{t}s}_{0}:Ker(Nm_{p})^{0}\to 
P(X,\delta)$ and isogenies $s_{0}:P(X,\delta)\to P(C,C')$, 
${^{t}s}_{0}:P(C,C')\to P(X,\delta)$, such that 
${^{t}s}_{0}s_{0}|_{_{P(X,\delta)}}=2^{n-1}id_{_{P(X,\delta)}}$, 
$s_{0}{^{t}s}_{0}|_{_{P(C,C')}}=2^{n-1}id_{_{P(C,C')}}$
\item 
${^{t}S}_{0}(S_{0}(z_{A}))=z_{A}-D(z_{A})+(n-1)T_{1}(z_{A})$ for 
$\forall A\subset \{1,\ldots,n\}$
\item
$S_{0}({^{t}S}_{0}(x_{k}))=2^{n-2}(x_{k}-i(x_{k}))+2^{n-2}T_{2}(x_{k})
$ for $\forall k \in [-n,n]$
\end{enumerate}
\end{lem}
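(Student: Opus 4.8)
The plan is to prove the computational identities (ii) and (iii) first, by calculating the compositions ${}^{t}S_0\circ S_0$ and $S_0\circ {}^{t}S_0$ as $W$-equivariant linear maps and then transporting the resulting identities to the induced correspondences in $Div(X\times X)$, resp.\ $Div(C\times C)$, and to the induced homomorphisms; part (i) then follows formally from (ii), (iii) and the general machinery of Section~\ref{s0}.

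For (ii): from $S_0(e_A)=\sum_{j\notin A}f_{-j}+\sum_{j\in A}f_j$ one reads off ${}^{t}S_0(f_j)=\sum_{A\ni j}e_A$ and ${}^{t}S_0(f_{-j})=\sum_{A\not\ni j}e_A$, so the coefficient of $e_B$ in ${}^{t}S_0(S_0(e_A))$ is $|\{\,j\notin A:j\notin B\,\}|+|\{\,j\in A:j\in B\,\}|=n-|A|-|B|+2|A\cap B|$. Comparing this with the matrices of the identity $E$, of the trace map $T_1$ (all entries $1$) and of $D$ (entry $|A|+|B|-2|A\cap B|-1$ for $B\neq A$ and $0$ for $B=A$, by \eqref{eqn15}), the two cases $B=A$ and $B\neq A$ both yield ${}^{t}S_0\circ S_0=E-D+(n-1)T_1$ as linear maps, hence as correspondences, and evaluating at $z_A$ gives (ii). Part (iii) is the analogous count: for $k>0$, $S_0({}^{t}S_0(f_k))=\sum_{A\ni k}S_0(e_A)$ has coefficient $2^{n-1}$ on $f_k$, coefficient $2^{n-2}$ on every other $f_{\pm\ell}$, and coefficient $0$ on $f_{-k}$; subtracting $2^{n-2}$ times the trace map $T_2$ (whose matrix has all entries $1$) leaves $2^{n-2}(f_k-f_{-k})$, and the case $k<0$ is identical after interchanging the two summations. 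Since $f_{-k}$ corresponds to $i(x_k)$ and the graph of $i$ is the correspondence induced by $f_j\mapsto f_{-j}$ (cf.\ \S\ref{s0.2a}), evaluation at $x_k$ gives (iii).

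For (i): $S_0$ is $W$-equivariant (\S\ref{s0.21}), so Lemma~\ref{s0.12} gives $s_0(A)\subset B$ and ${}^{t}s_0(B)\subset A$, where $A=Ker(Nm_f)^0$ and $B=Ker(Nm_p)^0$. To sharpen $s_0(A)\subset P(C,C')$ I use the decomposition $S=2S_0+nT$ of \S\ref{s0.21}: the endomorphism induced by $T$ is $p^{*}\circ Nm_f$, which kills $A$, so $s|_A=2s_0|_A$; since $s_0(A)$ is an abelian subvariety one has $2\cdot s_0(A)=s_0(A)$, whence $s_0(A)=s(A)\subset P'=P(C,C')$ by \eqref{es0.18b}. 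Symmetrically, ${}^{t}S=2{}^{t}S_0+n{}^{t}T$ together with the vanishing of $f^{*}\circ Nm_p$ on $B$ gives ${}^{t}s_0(B)={}^{t}s(B)\subset P(X,\delta)$. Now restrict (ii) to $A$: the endomorphism of $T_1$ is $f^{*}\circ Nm_f$, which kills $A$, so ${}^{t}s_0\circ s_0|_A=(1-\delta)|_A$; on $P(X,\delta)=(1-\delta)A$ the operator $\delta$ acts by $1-q=1-2^{n-1}$ (Proposition~\ref{s0.14a} together with $q=2^{n-1}$ from \S\ref{s0.3}), hence ${}^{t}s_0\circ s_0|_{P(X,\delta)}=2^{n-1}\,id_{P(X,\delta)}$. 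Likewise, restricting (iii) to $B$ gives $s_0\circ {}^{t}s_0|_B=2^{n-2}(1-i)|_B$, and since $i$ acts by $-1$ on $P(C,C')=(1-i)B$ this is $2^{n-1}\,id_{P(C,C')}$. As $2^{n-1}\neq 0$, each of $s_0|_{P(X,\delta)}$ and ${}^{t}s_0|_{P(C,C')}$ has finite kernel; comparing the two relations forces $\dim P(X,\delta)=\dim P(C,C')$, and therefore both are isogenies.

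There is no genuine obstacle here; the whole argument is bookkeeping. The two points that need care are: (a) matching the explicit coefficient $n-|A|-|B|+2|A\cap B|$ against the matrix of $D$ from \eqref{eqn15} so that (ii) comes out with exactly the stated diagonal; and (b) keeping straight which of the trace endomorphisms $p^{*}Nm_f$, $f^{*}Nm_f$, $f^{*}Nm_p$, $p^{*}Nm_p$ vanishes on which of $A$, $B$ — and hence which terms drop after each restriction — along with the routine but necessary passage between linear maps, correspondences and induced homomorphisms invoked at every step.
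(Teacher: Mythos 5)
Your proof is correct, and its overall architecture (establish (ii) and (iii) as identities of $W$-equivariant linear maps, transfer them to correspondences, then deduce (i) by restricting to $A=Ker(Nm_f)^0$ and $B=Ker(Nm_p)^0$ and using the eigenvalues of $\delta$ and $i$ on the Prym--Tyurin pieces) matches the paper's. The one genuine difference is how (ii) and (iii) are obtained. The paper does not compute the matrix products directly: it invokes the general identity ${^{t}S}\cdot S=-q'G+d_1T_1$ of \cite{K}, Lemma~6.5.1 with $G=D-E$, $q'=4$, substitutes $S=2S_0+nT$, absorbs the cross terms via Lemma~\ref{s0.10a} to get ${^{t}S_0}\cdot S_0=E-D+f_1T_1$ with an undetermined constant $f_1$, and then pins down $f_1=n-1$ by comparing degrees of the correspondences (and similarly for (iii) using $S\cdot{^{t}S}=-qG'+d_2T_2$ with $G'=2(I-E)+T_2$). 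You instead verify the matrix identity entry by entry: your count $n-|A|-|B|+2|A\cap B|$ against \eqref{eqn15}, and the $2^{n-1}/2^{n-2}/0$ count for $S_0\,{^{t}S_0}$, are both correct, including the diagonal cases. This makes your argument self-contained and independent of the external reference, at the cost of redoing a computation the paper outsources. For (i) your treatment is if anything more careful than the paper's one-line appeal to Proposition~\ref{s0.16}: the observation that $s|_A=2s_0|_A$ because $T$ induces $p^*\circ Nm_f$, together with $2\cdot s_0(A)=s_0(A)$, is exactly the content of the paper's remark that $s=2s_0$, and your bookkeeping of which trace endomorphism dies on which subvariety is accurate throughout.
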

\begin{proof}
Part~(i) follows from Proposition~\ref{s0.16} and its proof, observing 
that $s=2s_{0},\; {^{t}s}=2\, {^{t}s}_{0}$. All the correspondences 
are obtained from linear maps as in \S\ref{s0.11}, so (abusing 
notation) we need to verify the following equalities of linear maps:
\[
{^{t}S}_{0}\cdot S_{0} = E-D +(n-1)T_{1}, \qquad S_{0}\cdot 
{^{t}S}_{0} = 2^{n-2}(E-I)+2^{n-2}T_{2}
\]
where $I$ is the involution $I(f_{k})=f_{-k}$ and $E$ is the identity map. 
We use \cite{K}, 
Lemma~6.5.1 (we notice that the identity map is denoted by $I$ in that paper). The data $W(B_{n}), 
L=\oplus_{j=1}^{n}\mathbb{Z}\epsilon_{j}, \lambda=\omega_{n}$ and 
$(\epsilon_{j}|\epsilon_{k})=-2\delta_{jk}$ yields $G=D-E$ and 
$q=2^{n-1}$. 
The data $W(B_{n}), 
L=\oplus_{j=1}^{n}\mathbb{Z}\epsilon_{j}, 
\lambda'=\omega_{1}$ and 
$(\epsilon_{j}|\epsilon_{k})=-2\delta_{jk}$ yields $G'=2(I-E)+T_{2}$ and 
$q'=4$(cf. \cite{K},\S8.8.1 and \S4.7). According to \cite{K}, 
Lemma~6.5.1 one has 
\[
{^{t}S}\cdot S = -q'G + d_{1}T_{1} = 4(E-D) + d_{1}T_{1}
\]
Replacing $S$ by $2S_{0}+nT$ and using Lemma~\ref{s0.10a} one obtains 
\[
{^{t}S}_{0}\cdot S_{0} = E-D + f_{1}T_{1}
\]

The degrees of the correspondences $S_{0},{^{t}S}_{0},D-E$ and $T_{1}$ 
are $n,2^{n-1}$, \newline
$2^{n-1}(n-2)$ and $2^{n}$ respectively. Therefore 
$f_{1}=n-1$ and Equality~(ii) is verified. Equality~(iii) is proved 
similarly using $S\cdot {^{t}S}=-qG'+d_{2}T_{2}$.
\end{proof}

\section{Some results for arbitrary $n$}\label{s2}

\begin{block}\label{s2.1}
Let the situation be as in \S\ref{s0.1}. So $p: C \to Y$ with 
decomposition (\ref{e0.2}) 
denotes a covering of type $B_n$. Assume moreover that $p$ is a simple $B_n$-covering.
In particular $C$ is an irreducible curve. 
According to \S\ref{s0.1} Hurwitz formula gives
\[
g(C') = \frac{|\mathfrak{D}_l|}{2} + ng(Y) -n+1, \qquad  g(C)= 
\frac{|\mathfrak{D}_s|}{2} + 
|\mathfrak{D}_l| + 2ng(Y) -2n +1.
\] 
Hence the Prym variety $P(C,C')$ of the covering $\pi:C \to C'$ is of 
dimension
\begin{equation}\label{es2.1a}
\dim P(C,C') = g(C') + \frac{|\mathfrak{D}_s|}{2} -1 = 
\frac{|\mathfrak{D}_s|+|\mathfrak{D}_l|}{2} +n 
g(Y) -n.
\end{equation} 
Consider the covering $f:X \to Y$ defined in \S\ref{s0.3}. 
  The curve $X$ is smooth by construction. It is connected, and therefore irreducible, because by Corollary~\ref{s0.56}
the monodromy group of $p:C\to Y$ is $W(B_n)\subset S_{2n}$ and therefore the monodromy group of
$f:X\to Y$ is a transitive subgroup of $S_{2^n}$.
 Since the covering $f$ is of degree $2^n$, we obtain
\begin{equation}\label{es2.1b}
g(X) = 2^{n-2}|\mathfrak{D}_s|+2^{n-3}|\mathfrak{D}_l| + 2^ng(Y) - 2^n +1.
\end{equation}
Counting the number of points over  branch points in $Y$ in the diagram  \eqref{es0.3b}
it is easily seen that $S_1:X\to C^{(n)}$  is injective, taking into account that  $g^*$ is injective.  Hence we may consider the points of $X$
as points of the symmetric product $C^{(n)}$.  In other words, we denote the points of $X$ by 
$x = x_1 + \cdots + x_n$ with $x_i \in C$.

The curve $X$ admits an involution $\sigma$, induced by the involution $'$ 
defined by the double covering $\pi$, 
namely 
$$
\sigma(x_1 + \cdots + x_n) =  x'_1 + \cdots + x'_n.
$$
Note that for $n \geq 3$ the involution $\sigma$ is fixed-point free. Hence, 
denoting $X' = X/\sigma$ and by $P(X,X')$ 
the corresponding Prym variety, we have
\begin{equation}\label{es2.1c}
\dim P(X,X') = \frac{g(X)-1}{2} = 
2^{n-3}|\mathfrak{D}_s|+2^{n-4}|\mathfrak{D}_l|  + 
2^{n-1}(g(Y) -1).
\end{equation}
\end{block}

\begin{block} \label{s2.2}
For any $x = x_1 + \cdots + x_n \in X$ and any subset $A \subset \{1, \ldots, 
n\}$ we denoted 
$x_A = \sum_{i\notin A} x_i + \sum_{i \in A} x'_i.$ In particular, $x = 
x_{\emptyset}$.
In \S\ref{s0.3} we defined a correspondence $D$ on $X$ and we saw that it is given by 
equation \eqref{eqn15}.
In particular we have for $x = x_{\emptyset}$,
$$
\begin{array}{cl}
D(x) & = \sum_{|B|\geq 2}(|B|-1)x_B\\
& = \sum_{|B|=2}x_B + 2 \sum_{|B|=3}x_B + \cdots + (n-1)x_{\{1,\ldots,n\}}
\end{array}
$$
This implies $\deg D = 2^{n-1}(n-2)+1.$
Moreover $D$ is of exponent $q = 2^{n-1}$ (see \S\ref{s0.3}).
\end{block}

\begin{lem} \label{lem2.3}
The correspondence $D$ commutes with $\sigma$:
$\sigma D = D \sigma.$
\end{lem}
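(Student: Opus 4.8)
The plan is to work directly with the explicit formula \eqref{eqn15} for $D$ and the combinatorial description of the action of $\sigma$ on the fiber points. Recall from \S\ref{s2.2} that for a point $x = x_1 + \cdots + x_n \in X$ and $A \subset \{1,\ldots,n\}$ we write $x_A = \sum_{i\notin A}x_i + \sum_{i\in A}x'_i$, and in particular $\sigma(x) = \sigma(x_\emptyset) = x_{\{1,\ldots,n\}}$. More generally, applying $\sigma$ to the point $x_A$ swaps the roles of the marked and unmarked indices, so $\sigma(x_A) = x_{A^c}$ where $A^c = \{1,\ldots,n\}\setminus A$. This is the only nontrivial observation needed, and it follows immediately from the definition of $\sigma$ on $X \subset C^{(n)}$.

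The key computation is then a direct substitution. On the one hand,
\[
D(\sigma(x_A)) = D(x_{A^c}) = \sum_{B \neq A^c}\bigl(|A^c| + |B| - 2|A^c \cap B| - 1\bigr)\, x_B .
\]
On the other hand,
\[
\sigma(D(x_A)) = \sum_{B \neq A}\bigl(|A| + |B| - 2|A\cap B| - 1\bigr)\, \sigma(x_B) = \sum_{B \neq A}\bigl(|A| + |B| - 2|A\cap B| - 1\bigr)\, x_{B^c}.
\]
Re-indexing the last sum by $B' = B^c$ (so $B \neq A$ becomes $B' \neq A^c$, and $|B| = n - |B'|$, $|A \cap B| = |A| - |A\cap B'|$), one finds the coefficient of $x_{B'}$ equals $|A| + (n - |B'|) - 2(|A| - |A\cap B'|) - 1 = n - |A| + |B'| - 2|A\cap B'|\cdot(-1)\cdots$; carrying out the arithmetic carefully, using $|A^c| = n - |A|$ and $|A^c \cap B'| = |B'| - |A\cap B'|$, both coefficients reduce to $|A^c| + |B'| - 2|A^c\cap B'| - 1$, so the two expressions agree. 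Since the identity holds fiberwise over a generic $y \in Y\setminus\mathfrak{D}$ for all $A$, the correspondences $\sigma D$ and $D\sigma$ in $Div(X\times X)$ coincide.

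I expect no serious obstacle here; the only thing requiring a modicum of care is the bookkeeping in the re-indexing step — matching $|A\cap B|$, $|A^c\cap B^c|$ and the cardinalities correctly so that the coefficient function $(A,B)\mapsto |A|+|B|-2|A\cap B|-1$ is seen to be invariant under $(A,B)\mapsto(A^c,B^c)$. In fact one can streamline the whole argument by noting the clean symmetry $|A^c| + |B^c| - 2|A^c\cap B^c| = (n-|A|) + (n-|B|) - 2(n - |A| - |B| + |A\cap B|) = |A| + |B| - 2|A\cap B|$, which is exactly the statement that the Hamming-type quantity $|A\,\triangle\,B|$ is unchanged by complementing both sets; this makes the coefficients manifestly equal and the condition $B \neq A \Leftrightarrow B^c \neq A^c$ is obvious. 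Either way the lemma follows, and it will be used subsequently to compare $P(X,\delta)$ with the Prym variety $P(X,X')$.
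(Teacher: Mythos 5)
Your proof is correct and follows essentially the same route as the paper: both use $\sigma(x_B)=x_{\overline{B}}$ together with the invariance of $|A|+|B|-2|A\cap B|$ under complementing both sets, which is exactly the identity the paper invokes. The streamlined symmetry observation at the end of your argument is precisely the paper's computation.
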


\begin{proof}
Note first that for all subsets $B \subset \{1,\ldots,n\}$ we have 
$\sigma(x_B)=x_{\overline B}$, where 
${\overline B} = \{1, \ldots,n\} \setminus B$. Hence 
$$
D\sigma(x_A) = D(x_{{\overline A}}) = \sum_{C \neq {\overline A}}(|{\overline 
A}| + |C| - 2|{\overline A} \cap C| -1)x_C
$$
Setting $C = {\overline B}$ and noting that $|{\overline A}|+|{\overline B}|-2|{\overline A} \cap {\overline B}| = 
|A|+|B|-2|A \cap B|,$
we have
$$
\begin{array}{cl}
D\sigma(x_A) &= \sum_{B \neq A}(|{\overline A}|+|{\overline B}|-2|{\overline 
A} \cap {\overline B}|-1)x_{\overline B}\\
& = \sum_{B \neq A}(|A| + |B| - 2|A \cap B| -1)x_{\overline B}\\
& = \sigma D(x_A). 
\end{array}
$$
\end{proof}
\begin{lem} \label{lem2.4}
For any $x = x_{\emptyset} \in X$,
$$
(D-1)(x + \sigma(x)) = (n-2) f^*f(x).
$$
\end{lem}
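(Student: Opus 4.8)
The plan is to compute $(D-1)(x+\sigma(x))$ directly from the explicit formula \eqref{eqn15} for $D$ together with the identity $\sigma(x_B) = x_{\overline B}$ established in the proof of Lemma~\ref{lem2.3}. Writing $x = x_\emptyset$, we have
\[
(D-1)(x) = D(x_\emptyset) - x_\emptyset = \sum_{B\neq\emptyset}(|B|-1)x_B,
\]
using that $|A|+|B|-2|A\cap B|-1$ specializes to $|B|-1$ when $A=\emptyset$; note the $B=\emptyset$ term vanishes, which absorbs the $-x_\emptyset$. Similarly, using $\sigma(x) = x_{\{1,\dots,n\}}$ and $D\sigma = \sigma D$ (Lemma~\ref{lem2.3}), one gets
\[
(D-1)(\sigma(x)) = \sigma\bigl((D-1)(x)\bigr) = \sum_{B\neq\emptyset}(|B|-1)x_{\overline B} = \sum_{C\neq\{1,\dots,n\}}(n-|C|-1)x_C,
\]
after the substitution $C=\overline B$, since $|B|=n-|C|$.

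Adding the two expressions, the coefficient of $x_B$ for a proper nonempty subset $B$ is $(|B|-1) + (n-|B|-1) = n-2$; the coefficient of $x_\emptyset$ is $n-1-1 = n-2$ (only the second sum contributes, with $C=\emptyset$); and the coefficient of $x_{\{1,\dots,n\}}$ is $(n-1)+(-1)$... wait, more carefully: the first sum contributes $|B|-1 = n-1$ at $B=\{1,\dots,n\}$ and the second sum contributes nothing there (its range excludes $C=\{1,\dots,n\}$). Hmm, so I should recheck: the first sum $\sum_{B\neq\emptyset}$ has $x_{\{1,\dots,n\}}$ with coefficient $n-1$, and the second sum $\sum_{C\neq\{1,\dots,n\}}$ has $x_{\{1,\dots,n\}}$ with coefficient $0$, giving $n-1$, not $n-2$. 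Likewise $x_\emptyset$ gets $0$ from the first sum and $n-1$ from the second. So actually the sum is $(n-2)\sum_{\text{all }B}x_B + x_\emptyset + x_{\{1,\dots,n\}}$, and since $x_\emptyset + x_{\{1,\dots,n\}} = x + \sigma(x)$ and $\sum_B x_B = f^*f(x)$ (the fiber of $f$ through $x$ is exactly $\{x_B : B\subset\{1,\dots,n\}\}$ by the description in \S\ref{s0.3}), we'd get $(D-1)(x+\sigma(x)) = (n-2)f^*f(x) + (x+\sigma(x))$, which is off by $x+\sigma(x)$. The resolution: I should instead organize the computation so that the full symmetric sum $\sum_B x_B = f^*f(x)$ appears cleanly. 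Writing $(D-1)(x) = \sum_B(|B|-1)x_B$ over \emph{all} $B$ (the $B=\emptyset$ term being $-x_\emptyset$, which is correct) and $(D-1)\sigma(x) = \sum_B(n-|B|-1)x_B$ over all $B$, their sum is $\sum_B\bigl((|B|-1)+(n-|B|-1)\bigr)x_B = (n-2)\sum_B x_B = (n-2)f^*f(x)$. That is the clean argument.

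So concretely the key steps are: (1) rewrite $(D-1)(x)$ as $\sum_{B\subseteq\{1,\dots,n\}}(|B|-1)x_B$, valid over the full index set including $\emptyset$; (2) apply $\sigma$ and use $\sigma(x_B)=x_{\overline B}$ and Lemma~\ref{lem2.3} to get $(D-1)(\sigma(x)) = \sum_B (n-|B|-1)x_B$; (3) add, so the coefficient of every $x_B$ is $n-2$; (4) identify $\sum_B x_B$ with $f^*f(x)$, since $f^{-1}(f(x)) = \{x_B : B\subseteq\{1,\dots,n\}\}$ as noted in \S\ref{s0.3}. There is no real obstacle here; the only point requiring slight care is the bookkeeping at the extreme terms $B=\emptyset$ and $B=\{1,\dots,n\}$, which is why it is cleanest to keep the summations over the entire power set rather than splitting off the diagonal term of $D$. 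One should also double-check that as correspondences (not just on a generic fiber) the equation of divisor classes holds, but this is automatic since all the correspondences in play are defined fiberwise away from $\mathfrak D$ and the identities of linear maps on $\mathbb R^{2^n}$ propagate to the correspondences by the construction of \S\ref{s0.11}.
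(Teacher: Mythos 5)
Your final argument is correct and is essentially the paper's own proof: both expand $(D-1)(x_\emptyset)$ as $\sum_{B\subseteq\{1,\dots,n\}}(|B|-1)x_B$ over the full power set, apply $\sigma$ via Lemma~\ref{lem2.3} and the substitution $x_B\mapsto x_{\overline B}$ to get $\sum_B(n-|B|-1)x_B$, and add to obtain $(n-2)\sum_B x_B=(n-2)f^*f(x)$. The initial false start with the truncated index set is harmless since you correctly identify and repair it before concluding.
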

\begin{proof}
Writing $\Sigma = \{1,\ldots,n\}$ we have using Lemma \ref{lem2.3},
$$
\begin{array}{cl}
(D-1)(x + \sigma(x))&=\sigma(Dx_{\emptyset} - x_{\emptyset}) + 
(Dx_{\emptyset}-x_{\emptyset})  \\
&= \sigma(\sum_{|B| \geq 2}(|B|-1)x_B - x_{\emptyset}) + \sum_{|B| \geq 
2}(|B|-1)x_{B} - x_{\emptyset}\\
&= \sum_{|B| \geq 2}(|B|-1)x_{\overline B} - x_{\Sigma} + \sum_{|B| \geq 
2}(|B|-1)x_{B} - x_{\emptyset}\\
&= \sum_{B \subset \Sigma}(|B|-1)x_{\overline B} + \sum_{B \subset 
\Sigma}(|B|-1)x_{B}\\
&= \sum_{B \subset \Sigma}(n-|B|-1)x_{B} + \sum_{B \subset 
\Sigma}(|B|-1)x_{B}\\
&= (n-2) \sum_{B \subset \Sigma}x_B = (n-2)f^*f(x)
\end{array}
$$
\end{proof}

As in \S\ref{s0.3} consider $A = Ker(Nm_f:JX \ra JY)^0$, 
the connected component of the kernel of the norm map of $f$. If 
$\delta \in End(JX)$ denotes the endomorphism 
induced by the correspondence $D$, we defined the Prym-Tyurin variety 
associated to $D$ by
\[
P(X,\delta) = (1- \delta)A.
\]
Lemma \ref{lem2.3} implies that $D$ induces an endomorphism of the 
Prym variety $P(X,X')$, also denoted bt $\delta$. Using this notation we have

\begin{pro} \label{prop2.5}
$P(X,\delta)$ and  $P(X,X')$ are related as follows:
\renewcommand{\theenumi}{\roman{enumi}}
\begin{enumerate}
\item 
$P(X,\delta) \subset P(X,X')$,
\item 
$P(X,\delta) = (\delta - 1)P(X,X')$.
\end{enumerate}
\end{pro}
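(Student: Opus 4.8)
\textbf{Proof proposal for Proposition~\ref{prop2.5}.}

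The plan is to work with tangent spaces (equivalently, the rational homology) and use the eigenspace decomposition of $\delta$ together with the relations between $\delta$, $\sigma$ and $f^*f$ established in Lemmas~\ref{lem2.3} and \ref{lem2.4}. Recall from \S\ref{s0.14a} and the paragraph after it that on $T_0(JX)$ the endomorphism $d\delta$ is diagonalizable with eigenvalues $\deg D$, $1$ and $1-q = 1-2^{n-1}$, whose eigenspaces are the tangent spaces of $f^*(JY)$, $(\delta+q-1)A$ and $P(X,\delta)=(1-\delta)A$ respectively. In particular $A = (\delta+q-1)A \oplus P(X,\delta)$ up to isogeny, and $P(X,\delta)$ is exactly the subvariety on which $\delta$ acts as multiplication by $1-q$. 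Since $P(X,X') = (1-\sigma)JX$ (up to isogeny, as $n\geq 3$ and $\sigma$ is fixed-point free), it suffices to compare these subtori of $JX$ at the level of $H_1(JX,\mathbb{Q})$.

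For (i), I would argue that $P(X,\delta)\subset (1-\sigma)JX$. Lemma~\ref{lem2.4} says $(D-1)(x+\sigma x) = (n-2)f^*f(x)$, i.e.\ as an identity of correspondences (or of endomorphisms of $JX$) one has $(\delta-1)(1+\sigma) = (n-2)f^*f$. Now restrict to $A = \mathrm{Ker}(Nm_f)^0$: there $f^*f$ is torsion (it factors through $JY$ and then back, and $A$ meets $f^*JY$ in a finite group), so $(\delta-1)(1+\sigma)$ is zero on $A$ up to isogeny; equivalently $(1+\sigma)$ maps $A$ into $\mathrm{Ker}(\delta-1)^0 = (\delta+q-1)A$. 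Hence on the complementary factor $P(X,\delta) = (1-\delta)A$ — where $\delta-1$ acts as the invertible scalar $-q$ — the operator $1+\sigma$ must vanish, i.e.\ $\sigma$ acts as $-1$ on $P(X,\delta)$. That is precisely the statement that $P(X,\delta)$ lies in the $(-1)$-eigenspace of $\sigma$, namely in $(1-\sigma)JX = P(X,X')$ (and it also shows $\sigma$ restricts to $-\mathrm{id}$ there, which is consistent with Lemma~\ref{lem2.3} giving that $\delta$ preserves $P(X,X')$).

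For (ii), I want the reverse containment $(\delta-1)P(X,X') \supseteq P(X,\delta)$ together with $(\delta-1)P(X,X')\subseteq P(X,\delta)$; since on $P(X,\delta)$ the map $\delta-1$ is the isogeny $-q\cdot\mathrm{id}$, the inclusion $(\delta-1)P(X,\delta) = P(X,\delta)$ already gives the nontrivial half once we know $P(X,\delta)\subseteq P(X,X')$. So the real content is $(\delta-1)P(X,X')\subseteq P(X,\delta)$: on $P(X,X')$ the endomorphism $\delta$ acts, by Lemma~\ref{lem2.3}, and I must check its image under $\delta-1$ lands in the $(1-q)$-eigenspace. For this I would use the quadratic relation $(\delta-1)(\delta+q-1)=0$ valid on $A\supseteq P(X,X')\cap A$; combined with the observation that $P(X,X')\subseteq A$ up to isogeny (indeed $Nm_f\circ(1-\sigma)$ is torsion because $f$ factors through $Y$ which is unchanged by $\sigma$, or more directly $P(X,X')$ is an abelian subvariety of $\mathrm{Ker}(Nm_f)$ since $f\circ\sigma$ and $f$ have the same image structure), it follows that $(\delta-1)$ applied to anything in $P(X,X')$ is killed by $\delta+q-1$, hence lies in $\mathrm{Ker}(\delta+q-1)^0 = (1-\delta)A = P(X,\delta)$. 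Running both inclusions gives $P(X,\delta) = (\delta-1)P(X,\delta)\subseteq (\delta-1)P(X,X')\subseteq P(X,\delta)$, hence equality.

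The main obstacle I anticipate is the bookkeeping around ``up to isogeny'' versus honest equality: the eigenspace decomposition of $A$ under $\delta$ is only a decomposition up to isogeny, and $P(X,X')$, $f^*(JY)$, $A$ overlap in finite subgroups, so I must be careful that the identities from Lemmas~\ref{lem2.3}–\ref{lem2.4} are being used as genuine identities of correspondences (hence of homomorphisms of $JX$, valid on the nose), and then only pass to connected components of kernels/images at the very end. The cleanest route is probably to prove everything first at the level of $H_1(\cdot,\mathbb{Q})$, where all tori become $\mathbb{Q}$-vector spaces, the scalar $-q$ is genuinely invertible, and the inclusions $P(X,\delta)_{\mathbb{Q}}\subseteq P(X,X')_{\mathbb{Q}}$ and $(\delta-1)P(X,X')_{\mathbb{Q}} = P(X,\delta)_{\mathbb{Q}}$ are clean linear-algebra statements; then (i) and (ii), being statements about abelian subvarieties, follow since an abelian subvariety is determined by its rational homology.
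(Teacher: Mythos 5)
Your proof is correct, and part (i) is essentially the paper's argument: both rest on Lemma~\ref{lem2.4} together with the commutativity from Lemma~\ref{lem2.3} to conclude that $1+\sigma$ annihilates $(1-\delta)A$, hence $P(X,\delta)\subseteq \mathrm{Ker}(1+\sigma)^0=P(X,X')$. For part (ii), however, you take a genuinely different route. The paper writes $JX = P(X,X') + \rho^*JX'$ (with $\rho:X\to X'$ the quotient map), intersects with $A$ to get $A = P(X,X') + (\rho^*JX'\cap A)^0$, and then kills the second summand by an explicit computation: elements of $\rho^*JX'$ have the form $\mathfrak{a}'+\sigma(\mathfrak{a}')$, and $2(D-1)(\mathfrak{a}'+\sigma(\mathfrak{a}')) = 2(n-2)f^*Nm_f(\mathfrak{a}')=0$ on $A$, whence $(\delta-1)A=(\delta-1)P(X,X')$. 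Your argument is purely formal: the inclusion $(\delta-1)P(X,X')\subseteq P(X,\delta)$ already follows from $P(X,X')\subseteq A$ alone, since $(\delta-1)P(X,X')\subseteq(\delta-1)A=(1-\delta)A=P(X,\delta)$ (your detour through the quadratic relation and $\mathrm{Ker}(\delta+q-1)^0$ is valid but unnecessary), and the reverse inclusion follows from part (i) plus the fact that $\delta-1$ restricts to the surjective map $-q\cdot\mathrm{id}$ on $P(X,\delta)$. Your route is shorter and avoids the decomposition of $JX$ entirely; the paper's version buys the extra information that $\delta-1$ annihilates precisely the complementary piece $(\rho^*JX'\cap A)^0$, which fits the complementary-subvariety bookkeeping used later (e.g.\ Proposition~\ref{prop2.7} and Lemma~\ref{s1.34}). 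Two harmless over-cautions in your write-up: $f^*Nm_f$ vanishes identically on $A=\mathrm{Ker}(Nm_f)^0$, not merely up to torsion, and $P(X,X')=(1-\sigma)JX\subseteq A$ is an honest inclusion because $f\circ\sigma=f$; so the passage to rational homology, while safe, is not actually needed to clean anything up.
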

\noindent

\begin{proof}
(i) follows immediately from 
Lemma \ref{lem2.4}. 
For the proof of (ii)
note first that $P(X,X') \subset A$, since $Nm_f(x - \sigma(x)) = 0$ for every 
$x \in X$. Moreover,
$JX = P(X,X') + \pi^*JX'$. Intersecting with $A$ gives
\[
A = P(X,X') + (\pi^*JX' \cap A)^0.
\]
On the other hand, $P(X,\delta) = (\delta - 1)A$ by definition. Hence it suffices 
to check that 
$(\delta -1)((\pi^*JX' \cap A)^0) =0$. 

But $\pi^*JX' \cap A = \{\mathfrak{a}' + \sigma(\mathfrak{a}') \;|\; 
\mathfrak{a}' \in JX', \,  
Nm_f(\mathfrak{a}' + \sigma(\mathfrak{a}')) = 0\}$
and we have $Nm_f(\mathfrak{a}' + \sigma(\mathfrak{a}')) = 2 \, 
Nm_f(\mathfrak{a}')$. This implies, using Lemma \ref{lem2.3} 
and the proof of Lemma \ref{lem2.4}, 
\[
2(D-1)(\mathfrak{a}' + \sigma(\mathfrak{a}')) = 2(n-2)f^* Nm_f(\mathfrak{a}') 
= 0.
\]
Hence $(\delta -1)(\pi^*JX' \cap A)$ consists of torsion points, which implies 
$(\delta -1)((\pi^*JX' \cap A)^0) = 0.$ 
\end{proof}
\begin{block}\label{s2.6}
We need a construction due to Donagi (cf. \cite{D2}). 
Consider again the spinor weight $\omega_n$. The subgroup 
$W(D_n)\subset W(B_n)=W$ is of index 2 and one has a splitting 
(cf. \eqref{es0.3a})
\begin{equation}\label{e33.15}
W\omega_{n} = \{\lambda_{A}|\: |A|\; \text{is even}\}\cup 
\{\lambda_{A}|\: |A|\; \text{is odd}\}.
\end{equation}
The subgroup $W(D_n)$ acts transitively on the two subsets on the 
right. We notice that the latter subsets are the orbits of two
fundamental weights of the root system of type $D_n$, namely the 
dominant weights associated to the two semispinor representations 
of the Lie algebra $so(2n)$. 
In the situation of \S\ref{s0.3} the monodromy map of the 
covering $f:X\to Y$ is $\pi_1(Y\setminus \mathfrak{D},y_0) 
\overset{m}{\lto}W(B_n)\eto S(W\omega_{n})$. Let 
$$
G=m(\pi_1(Y\setminus \mathfrak{D},y_0))
$$ 
be the monodromy group. The splitting 
\eqref{e33.15} yields a decomposition 
$$
f:X\overset{g}{\lto}\tilde{Y}\overset{h}{\lto}Y.
$$ 
Here $h$ is of 
degree 2 and its monodromy map is the composition of $m$ with 
$W(B_n)\to S_{2}$. The latter is obtained from $W(B_n)$ acting on the 
two subsets of \eqref{e33.15}. 

If $G\subset W(D_n)$, then $\tilde{Y}$ 
is a disjoint union $\tilde{Y}=Y_{1}\sqcup Y_{2}$ of two copies of $Y$ 
and respectively $X=X_{1}\sqcup X_{2}$. If $G\not \subset W(D_n)$, 
then $\tilde{Y}$ is irreducible and the monodromy group of $g:X\to 
\tilde{Y}$ equals $G\cap W(D_n)$. 

If $p:C\to C'\to Y$ is a simply 
ramified $B_n$-covering then, calculating the action of the 
reflections on $W\omega_{n}$, we see that $h:\tilde{Y}\to Y$ is 
ramified in $\mathfrak{D}_{s}$ and $g:X\to \tilde{Y}$ is ramified in 
$h^{-1}(\mathfrak{D}_{\ell})$. 
The  Prym variety 
$P(\tilde{Y},Y)$ is of dimension
\begin{equation}\label{es2.6a}
\dim P(\tilde{Y},Y) = \frac{|\mathfrak{D}_s|}{2} +g(Y) -1.
\end{equation}
Using the correspondence $S_{1}\in Div(X\times C)$ of \S\ref{s0.21} and the commutative 
diagram \eqref{es0.3b} we may give another interpretation of the map $g:X\to 
\tilde{Y}$. 
Two points  
$x = x_1 + \cdots + x_n$ and $\tilde{x} = \tilde{x}_1 + \cdots +\tilde{x}_n$ 
of $X$ are called equivalent, denoted $x \sim \tilde{x}$, if and only if 
$f(x) = f(\tilde{x})$ and the points $x_i$ and $\tilde{x}_i$ of $C$ differ by 
an even number of changes. Then $\tilde{Y}$ is the quotient of $X$ 
modulo this equivalence relation and $g:X\to \tilde{Y}$ is 
the map associating to any $x = x_{\emptyset} \in X$ 
the equivalence class $\{ x_A\;|\; |A|\, even \}$.

 For $n$ even, the 
involution 
$\sigma$ respects the equivalence relation and $f$ factorizes as follows
\[
X \to X' \to \tilde{Y} \to Y.
\]
For $n$ odd, the involution $\sigma$ exchanges the equivalence classes and  we 
have instead a commutative diagram
\[
\begin{CD}
X         @>g>>         \tilde{Y}\\
@VVV                   @VVhV\\
X'        @>>>       Y
\end{CD}
\]
Let $\tau: \tilde{Y} \to \tilde{Y}$ denote the involution associated to the 
covering $h$. Then $g(\sigma(x)) = \tau(g(x))$ 
implies that $g$ induces a homomorphism of Prym varieties $Nm_g: P(X,X') \to 
P(\tilde{Y},Y)$.
\end{block}
\begin{lem} \label{lem2.6}
$Nm_g: P(X,X') \to P(\tilde{Y},Y)$ is the zero map for $n$ even and surjective 
for $n$ odd.
\end{lem}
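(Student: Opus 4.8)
The plan is to read the lemma off from the functoriality of the norm map together with the two factorizations of $f$ recorded in \S\ref{s2.6}. Throughout I use the identifications $P(X,X') = (1-\sigma_*)JX$ and $P(\tilde{Y},Y) = (1-\tau_*)J\tilde{Y}$: for an \'etale double covering the Prym variety is the identity component of the kernel of the norm, which equals the image of $1$ minus the deck involution on the Jacobian; and for a ramified double covering the kernel of the norm is already connected, so the same identity holds. Since $p$ is a simple $B_n$-covering we have $\mathfrak{D}_s\neq\emptyset$, hence $h:\tilde{Y}\to Y$ is ramified, and $X$, $\tilde{Y}$ are irreducible (the latter because the monodromy group is $W(B_n)\not\subset W(D_n)$ by Corollary~\ref{s0.56}). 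Consequently $Nm_g:JX\to J\tilde{Y}$ is surjective, since $Nm_g\circ g^{*}=\deg(g)\cdot\mathrm{id}$ forces its image to have finite index, hence to be all of $J\tilde{Y}$ by divisibility.

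Suppose $n$ is even. Then $\sigma$ respects the equivalence relation defining $\tilde{Y}$, so $g\circ\sigma=g$; equivalently $Nm_g\circ\sigma_*=Nm_g$ on $JX$. Hence $Nm_g\circ(1-\sigma_*)=0$, and since $P(X,X')=(1-\sigma_*)JX$ we conclude that $Nm_g$ vanishes identically on $P(X,X')$. Alternatively, this is immediate from the factorization $X\to X'\to\tilde{Y}\to Y$: the map $Nm_g$ factors through the norm of the quotient $X\to X'$, which kills $P(X,X')$ by definition.

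Suppose $n$ is odd. Then $\sigma$ interchanges the two equivalence classes in each fibre of $h$, i.e. $g\circ\sigma=\tau\circ g$, whence $Nm_g\circ\sigma_*=\tau_*\circ Nm_g$ on $JX$. For $u\in JX$ this gives $Nm_g\big((1-\sigma_*)u\big)=(1-\tau_*)\big(Nm_g(u)\big)$, and therefore $Nm_g\big(P(X,X')\big)=Nm_g\big((1-\sigma_*)JX\big)=(1-\tau_*)\big(Nm_g(JX)\big)=(1-\tau_*)J\tilde{Y}=P(\tilde{Y},Y)$, using the surjectivity of $Nm_g:JX\to J\tilde{Y}$. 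This is the asserted surjectivity.

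The argument is short, so there is no deep obstacle; the only points needing care are the two standard facts gathered in the first paragraph — the connectedness of $\ker Nm_h$ for the ramified double covering $h$ (which is what makes $(1-\tau_*)J\tilde{Y}$ coincide with $P(\tilde{Y},Y)$ on the nose, not merely up to a component), and the surjectivity of $Nm_g$. Once these are in place, the equivariance relations $g\sigma=g$ for $n$ even and $g\sigma=\tau g$ for $n$ odd — both of which are exactly the content of the case distinction in \S\ref{s2.6} — yield the lemma directly.
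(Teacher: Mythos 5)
Your proposal is correct and follows essentially the same route as the paper's (very terse) proof: the even case comes from $g\circ\sigma=g$, and the odd case from $g\circ\sigma=\tau\circ g$ together with the surjectivity of $Nm_g:JX\to J\tilde{Y}$, which the paper dismisses as "obvious." You have merely supplied the details (irreducibility of $\tilde{Y}$, the identifications $P(X,X')=(1-\sigma_*)JX$ and $P(\tilde{Y},Y)=(1-\tau_*)J\tilde{Y}$) that the paper leaves implicit.
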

\begin{proof}
For even $n$ we have $g(x_{\emptyset}) = g(\sigma(x_{\emptyset}))$ for any $x 
= x_{\emptyset} \in X$, 
which implies the assertion. The surjectivity for odd $n$ is obvious.
\end{proof}
\begin{pro} \label{prop2.7}
For any odd integer $n \geq 3$,
\renewcommand{\theenumi}{\roman{enumi}}
\begin{enumerate}
\item
\(
P(X,\delta) \subset K := Ker(P(X,X') 
\stackrel{Nm_g}{\longrightarrow} 
P(\tilde{Y},Y))^{0}.
\)
\item 
$g^{*}P(\tilde{Y},Y)\subset (\delta +2^{n-1}-1)P(X,X')$
\end{enumerate}
\end{pro}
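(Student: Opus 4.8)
The plan is to derive both statements from the structural information we already have: the factorization $f: X \overset{g}{\lto} \tilde Y \overset{h}{\lto} Y$ from \S\ref{s2.6}, the relation $g(\sigma(x)) = \tau(g(x))$, the formula \eqref{eqn15} for $D$, and Lemma~\ref{lem2.4}. For part (i), recall $P(X,\delta) \subset P(X,X')$ by Proposition~\ref{prop2.5}(i), so it suffices to show $Nm_g$ kills $(1-\delta)A$, or equivalently that $Nm_g \circ (1-\delta)$ is zero on $P(X,X')$ up to torsion. The natural approach is to compute, at the level of correspondences, the composite of $D-\Delta$ with the trace correspondence of $g$; using \eqref{eqn15} together with the description of the fibres of $g$ (the equivalence class of $x_{\emptyset}$ being $\{x_A : |A| \text{ even}\}$), one sums $(|A|+|B|-2|A\cap B|-1)$ over $B$ in a fixed $g$-fibre and checks this is independent of which $g$-fibre $x_B$ lies in. Concretely, for fixed $A$, the value $\sum_{|B| \text{ even}} (|B| - 2|A\cap B| + |A| - 1)$ compared with $\sum_{|B| \text{ odd}}(\cdots)$ should differ by a constant multiple of $f^*f(x)$-type terms; this is exactly the kind of computation Lemma~\ref{lem2.4} models for the full involution $\sigma$, and I expect the same pigeonhole-on-subsets argument to apply. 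Then $Nm_g \circ \delta$ agrees with a multiple of $h^*h \circ Nm_g$ on $JX$, which forces $(1-\delta)A$ into $\ker(Nm_g)$.

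For part (ii), the cleanest route is duality: $g^* P(\tilde Y, Y)$ is the image of the transpose correspondence ${}^t$(trace of $g$) restricted appropriately, and one wants to show $D$ acts on it by the eigenvalue $1-2^{n-1}$, i.e. that $(\delta + 2^{n-1}-1)$ annihilates $g^* P(\tilde Y, Y)$ modulo torsion. Equivalently, by Proposition~\ref{s0.14a} the differential $d\delta$ has eigenvalue $1-q = 1-2^{n-1}$ exactly on the tangent space of $P(X,\delta) = (1-\delta)A$, so part (ii) amounts to showing $g^*P(\tilde Y,Y)$ lies in $(1-\delta)A$ — but wait, that is too strong; rather one shows $g^*P(\tilde Y,Y) \subset (\delta+q-1)P(X,X')$, which is the complementary summand, so instead I must show $D$ acts as $+1$ is ruled out and the remaining eigenvalue $1-q$ occurs. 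The practical method: apply $D$ to a divisor $g^*(\text{equivalence class})$ using \eqref{eqn15}, regroup the sum over $B$ according to whether $B$ is in the same $g$-fibre as the base class, and extract the coefficient; the ``$-1$'' in $(|A|+|B|-2|A\cap B|-1)$ together with the count of subsets of each parity should produce precisely $-(2^{n-1}-1)$ times the identity plus trace terms, which die on $P(X,X')$.

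The main obstacle I anticipate is the bookkeeping in part (ii): one must carefully track how $D$ interacts with the non-free structure near branch points (the curve $X$ sits in $C^{(n)}$ and over $\mathfrak{D}_s, \mathfrak{D}_\ell$ the fibres degenerate), and ensure the correspondence-level identities descend correctly to $JX$. There is also a subtlety that $g^* P(\tilde Y,Y)$ need not be connected or might meet $\pi^* JX'$, so one should argue with the connected component and absorb torsion, exactly as in the proof of Proposition~\ref{prop2.5}(ii). I would first nail down part (i) via the symmetric-sum identity (a mild generalization of Lemma~\ref{lem2.4} with the even-subsets constraint), then handle part (ii) by either a parallel direct computation or — cleaner — by dualizing part (i): if $Nm_g \circ (1-\delta)|_A = 0$ then transposing gives $(1-\delta) \circ g^* = 0$ on $P(\tilde Y,Y)$ up to torsion, and combined with $g^*$ landing in $A$ and the quadratic relation $(\delta-1)(\delta+q-1)=0$ on $A$, this pins $g^*P(\tilde Y,Y)$ inside the $(\delta+q-1)$-eigenpart, which is $(\delta+2^{n-1}-1)P(X,X')$ by Proposition~\ref{prop2.5}(ii).
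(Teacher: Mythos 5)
Your proposal follows essentially the same route as the paper's proof: part (i) is exactly the parity-grouping computation you describe (the sums of $|B|-1$ over even and over odd $|B|$ coincide by the binomial identities $\sum_k(-1)^k\binom{n}{k}=0$ and $\sum_k(-1)^k k\binom{n}{k}=0$, yielding $Nm_g\circ(\delta-1)=M\,h^*\circ Nm_f$ on $JX$ and hence vanishing on $A$), and part (ii) is deduced from (i) by the same duality you propose, which the paper phrases via the adjunction $E_{J\tilde Y}(g_*v,w)=E_{JX}(v,g^*w)$ and the fact that $(\delta+2^{n-1}-1)P(X,X')$ is the orthogonal complement of $(1-\delta)P(X,X')$ inside $P(X,X')$. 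Your alternative direct computation of $D\circ g^*$ for (ii) also works and is what the paper carries out explicitly in the rank-four case (Lemma~\ref{lem4.6}).
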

\begin{proof}

(i)\;  
For any $x = x_{\emptyset} \in X$ we have $D(x_{\emptyset}) = \sum_{k=2}^n 
\sum_{|A|=k} (k-1)x_A$, which implies 
\[
g(D(x_{\emptyset}) -x_{\emptyset}) = [\sum_{\stackrel{k=0}{k \; even}}^n 
\sum_{|A|=k} (k-1)] g(x_{\emptyset}) + 
[\sum_{\stackrel{k=0}{k \; odd}}^n \sum_{|A|=k} (k-1)] \tau(g(x_{\emptyset}))
\]
Define 
\[
M := \sum_{\stackrel{k=0}{k \; even}}^n \sum_{|A|=k} (k-1) = 
\sum_{\stackrel{k=0}{k \; odd}}^n \sum_{|A|=k} (k-1)
\]
and note that the equality is an immediate consequence of the well known 
binomial identities $\sum_{k=0}^n (-1)^k{n \choose k} = 0$ and 
$\sum_{k=0}^n (-1)^{k}k{n \choose k} = 0$.
We get 
\[
g(D(x_{\emptyset}) -x_{\emptyset}) = M(g(x_{\emptyset}) + \tau 
g(x_{\emptyset}))
\]
and thus for all $\mathfrak{a} \in JX$
\[
g(\delta(\mathfrak{a}) - \mathfrak{a}) = M [ Nm_g(\mathfrak{a}) + \tau 
Nm_g(\mathfrak{a})] = M h^* Nm_f(\mathfrak{a}).
\]
Now recall that $P(X,\delta) = (\delta - 1) A$. So any $\mathfrak{b} \in 
P(X,\delta)$ is of the form
$\mathfrak{b} = (\delta - 1)(\mathfrak{a})$ with $Nm_f(\mathfrak{a}) = 0$. 
This implies
$Nm_g(\mathfrak{b}) = M h^* Nm_f(\mathfrak{a}) = 0$ and thus the 
assertion.

(ii)\; If $z\in \tilde{Y}$, then 
$g^{*}(z-\tau(z))=g^{*}(z)-\sigma(g^{*}(z))$. 
This shows that $g^{*}(P(\tilde{Y},Y))\subset P(X,X')$. 

Let $JX=V/\Lambda, 
J\tilde{Y}=W/\Gamma$, and let $V^{-}, W^{-}$ be the anti-invariant 
subspaces of $\sigma$ and $\tau$ respectively. Since 
$E_{JX}=-(\;,\,)_{X}$ and $E_{J\tilde{Y}}=-(\;,\,)_{\tilde{Y}}$, we have 
by the projection formula,
$E_{J\tilde{Y}}(g_{*}v,w)=E_{JX}(v,g^{*}w)$ for any $v\in V$ and $w\in 
W$. So, $g^{*}(W)$ is orthogonal to $Ker(g_{*}:V\to W)$. We saw that 
$g^{*}(W^{-})\subset V^{-}$. By Proposition~\ref{prop2.5} (i) and Part~(i) 
one has $(1-\delta)V^{-}\subset 
V^{-}\cap Ker(g_{*})$. The subspace $(\delta +q-1)V^{-}$, with $q=2^{n-1}$, is 
the orthogonal complement of $(1-\delta)V^{-}$ with respect to 
$E_{\Xi}= \frac{1}{2}(E_{JX}|_{V^{-}})$. Therefore 
$g^{*}(W^{-})\subset (\delta+2^{n-1}-1)V^{-}$, which proves (ii).
\end{proof}
\begin{rem}\label{ptv1.5}
We notice that the assumption  $p:C\to Y$ is a simple $B_n$-covering, which we made for simplicity at the 
beginning of this section, was essentially used only in \S\ref{s2.1} for calculating various dimensions using the 
Hurwitz formula. The constructions and the proofs of the statements in \S{\S}\ref{s2.2}--\ref{prop2.7} 
hold under the more general assumption
that $p:C\to Y$ is an arbitrary $B_n$-covering, in particular the nonsingular curves $C$ and $X$ might be reducible.
Indeed the proofs of the statements are based on identities between various correspondences. One verifies these identities 
on the generic fibers of $p:C\to Y$ and $f:X\to Y$, as done in the text above, then taking closures
obtains the identities on the whole curves. 
\end{rem}

\section{Dual Abelian varieties and the spinor weight}\label{s0a}
\begin{block}\label{s0.5}
We first recall some material from \cite{BL3} (see also \cite{BL2}, 
Section~14.4 and \cite{K1}, \S3.4). Let $(P,L)$ be  a polarized 
Abelian variety, $P=V/\Lambda$, let $E:\Lambda \times \Lambda \to 
\mathbb{Z}$ be the Riemann form and let $(d_1,\ldots,d_{p})$ be the 
polarization type of $L$. One has $V\cong 
\Lambda_{\mathbb{R}}:=\Lambda\otimes _{\mathbb{Z}}\mathbb{R}$ as 
$\mathbb{R}$-vector spaces. Let $I:\Lambda_{\mathbb{R}}\to 
\Lambda_{\mathbb{R}},\; I^{2}=-id$ be the operator defining the complex 
structure of $V$. The dual Abelian variety $\hat{P}$ is isomorphic to 
$Hom_{\mathbb{R}}(\Lambda_{\mathbb{R}},\mathbb{R})/Hom_{\mathbb{Z}}
(\Lambda,\mathbb{Z})$ where the complex structure on 
$Hom_{\mathbb{R}}(\Lambda_{\mathbb{R}},\mathbb{R})$ is defined by the 
unique operator $J$ such that $\langle J\omega,Iv\rangle = \langle 
\omega,v\rangle$. The canonical homomorphism $\varphi_{L}:P\to 
\hat{P}$ is given by the $\mathbb{C}$-linear map 
$\varphi:\Lambda_{\mathbb{R}}\to 
Hom_{\mathbb{R}}(\Lambda_{\mathbb{R}},\mathbb{R})$, where 
$\varphi(v)=E(v,-)$. Let $\Lambda^{*}$ be the dual lattice of 
$\Lambda$, i.e.
\[
\Lambda^{*} = \{v\in V|E(v,\lambda)\in \mathbb{Z} \text{\; for\;} \forall 
\lambda\in \Lambda\}.
\]
Then $\varphi$ induces an isomorphism 
$V/\Lambda^{*}\overset{\sim}{\lto}\hat{P}$. The dual polarization 
$L_{\delta}$ on $\hat{P}$ is given by the unique Riemann form $\hat{E}$ such 
that $\varphi^{*}\hat{E}=d_{1}d_{p}E$. So, the pair 
$(\hat{P},L_{\delta})$ is isomorphic to $(V/\Lambda^{*},d_{1}d_{p}E)$. 

If $(\gamma_{1},\ldots,\gamma_{p},\gamma_{p+1},\ldots,\gamma_{2p})$ is 
a symplectic basis of $\Lambda$, \; 
$E(\gamma_{i},\gamma_{p+j})=\delta_{ij}d_{i}$, then 
\[
(-\frac{1}{d_{p}}\gamma_{2p},\ldots,-\frac{1}{d_{1}}\gamma_{p+1},
\frac{1}{d_{p}}\gamma_{p},\ldots,\frac{1}{d_{1}}\gamma_{1})
\]
 is a 
symplectic basis of $\Lambda^{*}$ with respect to $d_{1}d_{p}E$. So, 
the polarization type of $\hat{E}$ is 
\[
(d_1,\frac{d_1d_p}{d_{p-1}},\ldots,\frac{d_1d_p}{d_{p-i+1}},\ldots,d_{p}).
\]
\end{block}
Let $(A=V/\Lambda,\Theta)$ be a principally polarized Abelian variety. 
Let $\delta:A\to A$ be an endomorphism, which is symmetric with 
respect to the Rosati involution of $(A,\Theta)$ and satisfies the 
equation $(\delta-1)(\delta-q+1)=0$ for some $q\geq 2$. Let 
$P=(1-\delta)A$. Let $E$ be the Riemann form of $\Theta$ and let 
$E_{P}$ be the Riemann form of the restricted polarization 
$\Theta|_{P}$. The next proposition is a generalization to  arbitrary polarizations
 of some material well known in the case of principal polarizations 
(see \cite{BM}, Section~7 and \cite{K}, Proposition~2.4).
\begin{pro}\label{s0.7}
Let $(Z=V_{Z}/\Lambda_{Z},\theta)$ be a polarized Abelian variety with 
polarization 
type $(d_1,\ldots,d_{p})$ and let $E_{\theta}$ be its Riemann form. 
Let $f:A\to Z$ be a surjective homomorphism which satisfies
\begin{equation}\label{es0.7a}
E_{\theta}(\tilde{f}(\alpha),\tilde{f}(\beta)) = 
E((1-\tilde{\delta})(\alpha),\beta)
\end{equation}
for $\forall \alpha,\beta \in \Lambda$. 

Then 
$\mu=\varphi_{\Theta}^{-1}\circ \hat{f}:\hat{Z}\to A$ transforms 
$\hat{Z}$ onto $P$, $f|_{P}:P\to Z$ and $\mu:\hat{Z}\to P$ are 
isogenies, and 
$$\mu^{*}E_{P}=\frac{q}{d_1d_p}\hat{E_{\theta}}.
$$ 
Furtermore $\mu: \hat{Z}\to P$ is an isomorphism if and only if 
$\tilde{f}:\Lambda \to \Lambda_{Z}$ is surjective.
\end{pro}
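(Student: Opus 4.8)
The plan is to carry out the whole argument on real homology groups, which by the Notation conventions coincide with the real tangent spaces: write $V=\Lambda_{\mathbb R}=T_0A$, $V_Z=(\Lambda_Z)_{\mathbb R}=T_0Z$, and let $\tilde f\colon V\to V_Z$ and $\tilde\delta\colon V\to V$ be the $\mathbb R$-linear maps induced by $f$ and $\delta$. Since $f$ is surjective, $\tilde f$ is surjective, so every element of $V_Z$ is of the form $\tilde f(\alpha)$ with $\alpha\in V$; and \eqref{es0.7a} extends $\mathbb R$-bilinearly to all $\alpha,\beta\in V$. Two algebraic facts are used throughout. First, since $\delta$ is symmetric for the Rosati involution of $(A,\Theta)$, the operator $\tilde\delta$ is self-adjoint for $E$, i.e. $E(\tilde\delta\alpha,\beta)=E(\alpha,\tilde\delta\beta)$, hence so is $1-\tilde\delta$. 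Second, the quadratic relation satisfied by $\delta$ amounts to $(1-\tilde\delta)^2=q(1-\tilde\delta)$; hence $\tfrac1q(1-\tilde\delta)$ is an idempotent of $V$ with image $V_-:=(1-\tilde\delta)V=T_0P$ and kernel $V_+:=\ker(1-\tilde\delta)$, so that $V=V_+\oplus V_-$.

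First I would pin down $\ker\tilde f$. If $\alpha\in\ker\tilde f$ then $E((1-\tilde\delta)\alpha,\beta)=E_\theta(0,\tilde f\beta)=0$ for all $\beta$, so $(1-\tilde\delta)\alpha=0$ by nondegeneracy of $E$; conversely, if $(1-\tilde\delta)\alpha=0$ then $E_\theta(\tilde f\alpha,\tilde f\beta)=E(0,\beta)=0$ for all $\beta$, and since $\tilde f$ is onto and $E_\theta$ is nondegenerate, $\tilde f\alpha=0$. Thus $\ker\tilde f=V_+$; in particular $\dim Z=\dim V-\dim V_+=\dim V_-=\dim P$, and $\tilde f$ restricts to an isomorphism $V_-\overset{\sim}{\to}V_Z$, so $f|_P\colon P\to Z$ is an isogeny. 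Next, in the model $\hat Z\cong V_Z/\Lambda_Z^{*}$ of \S\ref{s0.5}, where $\Lambda_Z^{*}=\{z\in V_Z\mid E_\theta(z,\Lambda_Z)\subset\mathbb Z\}$, the induced map $\tilde\mu$ of $\mu=\varphi_\Theta^{-1}\circ\hat f$ takes $z\in V_Z$ to the unique $v\in V$ with $E(v,\beta)=E_\theta(z,\tilde f\beta)$ for all $\beta$; by \eqref{es0.7a}, if $z=\tilde f\alpha$ then $\tilde\mu(z)=(1-\tilde\delta)\alpha$. Hence $\operatorname{im}\tilde\mu=(1-\tilde\delta)V=V_-=T_0P$, so $\mu(\hat Z)=P$; and for $z\in\Lambda_Z^{*}$ one has $E(\tilde\mu z,\Lambda)=E_\theta(z,\tilde f\Lambda)\subset E_\theta(z,\Lambda_Z)\subset\mathbb Z$, whence $\tilde\mu z\in\Lambda^{*}=\Lambda$ because $\Theta$ is principal. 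Since $\tilde\mu\colon V_Z\to V_-$ is a linear isomorphism, $\mu\colon\hat Z\to P$ is an isogeny.

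For the polarization identity, take $u,v\in\Lambda_Z^{*}$, write $u=\tilde f\alpha$, $v=\tilde f\beta$, recall $E_P=E|_{V_-}$ and $\tilde\mu u=(1-\tilde\delta)\alpha$, $\tilde\mu v=(1-\tilde\delta)\beta$, and compute using self-adjointness of $1-\tilde\delta$, then $(1-\tilde\delta)^2=q(1-\tilde\delta)$, then \eqref{es0.7a}:
\[
\mu^{*}E_P(u,v)=E\big((1-\tilde\delta)\alpha,\,(1-\tilde\delta)\beta\big)=E\big((1-\tilde\delta)^2\alpha,\,\beta\big)=q\,E\big((1-\tilde\delta)\alpha,\,\beta\big)=q\,E_\theta(\tilde f\alpha,\tilde f\beta)=q\,E_\theta(u,v).
\]
By \S\ref{s0.5} the dual polarization on $\hat Z\cong V_Z/\Lambda_Z^{*}$ has Riemann form $\hat E_\theta=d_1d_p\,E_\theta$, so this reads $\mu^{*}E_P=\dfrac{q}{d_1d_p}\,\hat E_\theta$.

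Finally, the isomorphism criterion. The lattice of $P\hookrightarrow A$ is $\Lambda_P=\Lambda\cap V_-$, and since $\tilde\mu\colon V_Z\to V_-$ is already a linear isomorphism, $\mu$ is an isomorphism of Abelian varieties if and only if $\tilde\mu(\Lambda_Z^{*})=\Lambda_P$. For $z\in V_Z$, from $E(\tilde\mu z,\lambda)=E_\theta(z,\tilde f\lambda)$ and $\Lambda^{*}=\Lambda$ one gets $\tilde\mu z\in\Lambda_P\iff E_\theta(z,\tilde f\Lambda)\subset\mathbb Z\iff z\in(\tilde f\Lambda)^{\vee}$, the $E_\theta$-dual lattice of $\tilde f\Lambda$ in $V_Z$; hence $\tilde\mu(\Lambda_Z^{*})=\Lambda_P$ is equivalent to $\Lambda_Z^{*}=(\tilde f\Lambda)^{\vee}$. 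As $\tilde f\Lambda\subset\Lambda_Z$ are full lattices of $(V_Z,E_\theta)$ and $L\mapsto L^{\vee}$ is an inclusion-reversing involution on such lattices, this holds precisely when $\tilde f\Lambda=\Lambda_Z$, i.e. when $\tilde f\colon\Lambda\to\Lambda_Z$ is surjective. The step requiring the most care, and the real heart of the proof, is the bookkeeping with the four lattices $\Lambda,\Lambda_Z,\Lambda_Z^{*},\Lambda_P$ and the two duality identifications $\varphi_\Theta$ and $\varphi_\theta$; once everything is transported to $V$ and $V_Z$, each assertion reduces to the surjectivity of $\tilde f$ together with the two identities for $\tilde\delta$.
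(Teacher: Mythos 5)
Your proof is correct and follows essentially the same route as the paper: your $\tilde\mu$ is exactly the paper's adjoint map ${}^{t}\phi$ defined by $E({}^{t}\phi(x),w)=E_{\theta}(x,\tilde f(w))$, and the key identities you use ($\tilde\mu\circ\tilde f=1-\tilde\delta$, $\ker\tilde f=\ker(1-\tilde\delta)$, and $E(\tilde\mu u,\tilde\mu v)=q\,E_{\theta}(u,v)$) are the paper's claims (a)--(d) in only slightly different packaging, with self-adjointness of $1-\tilde\delta$ plus $(1-\tilde\delta)^{2}=q(1-\tilde\delta)$ replacing the composite relations $\phi\circ{}^{t}\phi=q\cdot\mathrm{id}$. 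The one point where you diverge is the final isomorphism criterion, which you prove by a self-contained lattice-duality computation ($(\tilde f\Lambda)^{\vee}=\Lambda_{Z}^{*}$ iff $\tilde f\Lambda=\Lambda_{Z}$) where the paper simply cites $\operatorname{Ker}(\hat f)\cong\operatorname{Hom}(\Lambda_{Z}/\tilde f(\Lambda),\mathbb{C}^{*})$; both are valid.
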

\begin{proof}
One has $P=V^{-}/\Lambda \cap V^{-}$, where $V^{-}$ is the eigenspace 
of $\tilde{\delta}:V\to V$ with eigenvalue $1-q$. Let $\phi =\tilde{f}:V\to 
V_{Z}$. Define ${^{t}\phi}:V_{Z}\to V$ by 
\[
E({^{t}\phi}(x),w) = E_{Z}(x,\phi(w))
\]
for all $x \in V_Z$ and $w \in V$.
This is a $\mathbb{C}$-linear map which induces 
$\varphi_{\Theta}^{-1}\circ \hat{f}\circ \varphi_{\theta}:Z\to 
A$.

\medskip \noindent
{\sc Claim}. The following properties hold:
\renewcommand{\theenumi}{\alph{enumi}}
\begin{enumerate}
\item
${^{t}\phi}:V\to V_{Z}$ is injective,
\item 
${^{t}\phi}\circ \phi = 1-\tilde{\delta}$,
\item
$\phi \circ \tilde{\delta} = (1-q)\phi$,
\item
$\phi \circ {^{t}\phi} = q\cdot id$.
\end{enumerate}

\medskip \noindent
Indeed, ${^{t}\phi}(x)=0$ iff $E({^{t}\phi}(x),w)=0$ for $\forall w\in 
V$. Since $E({^{t}\phi}(x),w)=E_{Z}(x,\phi(w))$ and $\phi:V\to V_{Z}$ 
is epimorphic by hypothesis, one obtains $x=0$. One has
\[
E({^{t}\phi}\circ \phi(v),w) = E_{\theta}(\phi(v),\phi(w)) = 
E((1-\tilde{\delta})v,w)
\]
for $\forall v,w\in V$. Therefore ${^{t}\phi}\circ \phi=1-\tilde{\delta}$. In 
order to prove (c) it suffices to verify that ${^{t}\phi}\circ 
\phi\circ \tilde{\delta} = (1-q){^{t}\phi}\circ \phi$. This follows from (b) 
and the equation $(\delta-1)(\delta+q-1)=0$. In order to prove (d) it 
suffices to verify that $\phi \circ {^{t}\phi}\circ \phi = q\phi$. 
This follows from (b) and (c). The claim is proved.

\medskip \noindent
Using (b) and (a) one has that 
${^{t}\phi}:V_{Z}\to V^{-}$ is an isomorphism, so $\mu:\hat{Z}\to P$ 
is an isogeny. Furthermore $Ker\: \phi = Ker( {^{t}\phi}\circ \phi) = 
Ker(1-\tilde{\delta}) = (\tilde{\delta}+q-1)V$. Therefore $f|_{P}:P\to Z$ is an 
isogeny. One has 
\[
E({^{t}\phi}(x),{^{t}\phi}(y)) = E_{\theta}(x,\phi\circ{^{t}\phi}(y)) 
= q\: E_{\theta}(x,y)
\]
Since ${^{t}\phi}$ induces 
$\varphi_{\Theta}^{-1}\circ \hat{f}\circ \varphi_{\theta} = \mu\circ 
\varphi_{\theta}$ we conclude that 
$\varphi_{\theta}^{*}(\mu^{*}E_{P})=q\: E_{\theta}$. This implies 
$\mu^{*}E_{P}=\frac{q}{d_1d_p}\hat{E_{\theta}}$ (cf. \S\ref{s0.5}).
The last statement follows from the isomorphism (cf. \cite{BL2}, 
Proposition~2.4.3)
\[
Ker(\hat{f}:\hat{Z}\to \hat{A}) \cong 
Hom(\Lambda_{Z}/\tilde{f}(\Lambda),\mathbb{C}^{*}).
\]
\end{proof}
\begin{block}\label{s10.24}
Recall that if $\tilde{Y}\to Y$ 
is a double covering of smooth, projective, irreducible curves then 
$\dim P(\tilde{Y},Y)=g(\tilde{Y})-g(Y)$ and the restriction of the 
canonical polarization $\Theta_{J\tilde{Y}}$ on $P(\tilde{Y},Y)$ has 
type $(2,\ldots,2)$ if $\tilde{Y}\to Y$ is unramified and type 
$(1,\ldots,1,2,\ldots,2)$ if $\tilde{Y}\to Y$ is ramified, where 2 
appears $g(Y)$ times and 1 appears $g(\tilde{Y})-2g(Y)$ times (cf. 
\cite{Fay}, \cite{Mum}).
Suppose 
$p:C\overset{\pi}{\lto}C'\overset{g}{\lto}\mathbb{P}^1$ is a simply ramified 
$B_{n}$-covering with irreducible $C$. Let $\mathfrak{D}=\mathfrak{D}_{s}\cup 
\mathfrak{D}_{\ell}$ be the splitting of the discriminant locus as in 
\S\ref{s0.1}. Let $P'=P(C,C')$ and let $E_{P'}$ be the Riemann form of the 
restriction $\Theta_{JC}|_{P'}$. 
 Calculating the genera of $C$ and $C'$ by the 
Hurwitz formula one obtains that the polarization type 
$(d_1,\ldots,d_{p})$ of $\Theta_{JC}|_{P'}$ is $(2,\ldots,2)$ if 
$|\mathfrak{D}_{s}|=0$ or $|\mathfrak{D}_{s}|=2$ and 
$(1,\ldots,1,2,\ldots,2)$ where 1 appears 
$\frac{1}{2}|\mathfrak{D}_{s}|-1$ times and 2 appears 
$\frac{1}{2}|\mathfrak{D}_{\ell}|+1-n$ times.
\end{block}
\begin{block}\label{s10.24a}
Let $\hat{P}'$ and $\hat{E}_{P'}$ be the dual Abelian 
variety and the dual polarization. Consider the covering $f:X\to 
\mathbb{P}^1$ of degree $2^{n}$ associated with the spinor weight 
$\omega_{n}$ as in \S\ref{s0.3}. Let $P=P(X,\delta)$ be the 
Prym-Tyurin variety. Here $A=JX,\: P=(1-\delta)JX$ and $q=2^{n-1}$. 
The correspondence $S_{0}\in Div(X\times C)$ constructed in 
\S\ref{s0.21} induces a homomorphism $s_{0}:JX\to P'=P(C,C')$. 

We claim the hypothesis of Proposition~\ref{s0.7} is satisfied for 
$s_{0}:JX\to P'$, where $(A,\Theta)=(JX,\Theta_{JX})$, 
$(Z,\theta)=(P',\Theta_{JC}|_{P'})$. Abusing notation we will write $s_0$ instead of 
$\tilde{s}_0$, or instead of the induced homomorphism on homology $(s_0)_*$, when 
things are clear from the context.

By Lemma~\ref{s0.23} $s_{0}:JX\to P'$ is 
surjective. Furthermore a standard fact is  
that the homomorphisms $s_{0}:JX\to JC$ and ${^{t}s}_{0}:JC\to JX$ 
induced by $S_{0}\in Div(X\times C)$ and its transpose ${^{t}S}_{0}\in 
Div(C\times X)$ satisfy the relation 
\[
(\alpha,s_{0}(\beta))_{C} = ({^{t}s}_{0}(\alpha),\beta)_{X}
\]
for $\forall \alpha \in H_{1}(C,\mathbb{Z})$ and $\forall \beta \in 
H_{1}(X,\mathbb{Z})$. Using Lemma~\ref{s0.23} one obtains the 
following relation of Riemann forms:
\[
E_{P'}(s_{0}(\alpha),s_{0}(\beta)) = E_{JX}((1-\tilde{\delta})\alpha,\beta)
\]
for $\forall \alpha,\beta \in H_{1}(X,\mathbb{Z})$. Applying 
Proposition~\ref{s0.7} we obtain the following result.
\end{block}
\begin{thm}\label{s10.25}
Let $p:C\overset{\pi}{\lto}C'\overset{g}{\lto}\mathbb{P}^1$ be a 
$B_n$-covering. 
Let $f:X\to \mathbb{P}^1$, $P=P(X,\delta)$ and $s_{0}:JX\to P'=P(C,C')$ be 
as above. Let $E_{P}, E_{P'}$ be the Riemann forms of $\Theta_{JX}|_P$, $\Theta_{JC}|_{P'}$ respectively
and let $(d_1,\ldots,d_p)$ be the polarization type of $\Theta_{JC}|_{P'}$. Then the homomorphism 
$\varphi^{-1}_{\Theta}\circ \hat{s}_{0}:\hat{P}'\to JX$ yields an 
isogeny $\mu: \hat{P}'\to P$ such that
\[
\mu^{*}E_{P}=\frac{2^{n-1}}{d_{1}d_{p}}\hat{E}_{P'}.
\]
\end{thm}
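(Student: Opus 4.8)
The plan is to recognize that Theorem~\ref{s10.25} is essentially an immediate consequence of Proposition~\ref{s0.7}, once one checks that the hypotheses of that proposition are met by the homomorphism $s_0 : JX \to P' = P(C,C')$. Indeed, the setup in \S\ref{s10.24a} has already assembled exactly the required ingredients: one takes $(A,\Theta) = (JX, \Theta_{JX})$ (a principally polarized abelian variety, since $\Theta_{JX}$ is the canonical principal polarization of a Jacobian), $(Z,\theta) = (P', \Theta_{JC}|_{P'})$ with its polarization type $(d_1,\ldots,d_p)$, the endomorphism $\delta$ with $(\delta-1)(\delta+2^{n-1}-1)=0$ and exponent $q = 2^{n-1}$, and $f = s_0$. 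So the first step is simply to verify that $\delta$ is symmetric with respect to the Rosati involution of $(JX,\Theta_{JX})$; this holds because $\delta$ is induced by a correspondence $D \in \mathrm{Div}(X\times X)$, and for a symmetric correspondence (or more precisely, since $D$ and its transpose induce adjoint endomorphisms, and the relevant symmetry $(\ell_i,\ell_j)=(\ell_j,\ell_i)$ built into the construction in \S\ref{s0.14} forces $D$ to be symmetric up to a multiple of the trace correspondence $T_1$, which acts trivially on $A$), the induced endomorphism is Rosati-symmetric on $A$. Then $P = P(X,\delta) = (1-\delta)JX$ matches the notation of Proposition~\ref{s0.7}.

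The second and genuinely substantive step is to establish the key identity \eqref{es0.7a} of Proposition~\ref{s0.7}, namely
\[
E_{P'}(s_0(\alpha), s_0(\beta)) = E_{JX}\bigl((1-\tilde\delta)(\alpha),\beta\bigr)
\]
for all $\alpha,\beta \in H_1(X,\mathbb{Z})$. For this I would proceed as in \S\ref{s10.24a}: first invoke the standard adjunction relation between a correspondence-induced homomorphism and its transpose, $(\alpha, s_0(\beta))_C = ({}^t s_0(\alpha),\beta)_X$, where $(\;,\;)_C$ and $(\;,\;)_X$ are the intersection (cup-product) pairings on $H_1$ of $C$ and $X$. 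Since the Riemann forms of the canonical polarizations satisfy $E_{JC} = -(\;,\;)_C$ and $E_{JX} = -(\;,\;)_X$, and since $\Theta_{JC}|_{P'}$ restricts $E_{JC}$, this gives $E_{P'}(s_0(\alpha), s_0(\beta)) = E_{JX}(({}^t s_0 \circ s_0)(\alpha),\beta)$ on the relevant classes. Then the crux is Lemma~\ref{s0.23}(ii), which yields the identity of correspondences ${}^t S_0 \cdot S_0 = E - D + (n-1)T_1$; passing to induced homomorphisms on $A = \mathrm{Ker}(Nm_f)^0$, the trace term $T_1$ dies (it factors through $f^*JY$, which meets $A$ trivially, hence induces zero on $A$), so ${}^t s_0 \circ s_0 = 1 - \tilde\delta$ on $A$ — and in particular on $P(X,X') \supset P$ and on all of the relevant homology. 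Care is needed here because the identity \eqref{es0.7a} is asserted for all $\alpha,\beta \in H_1(X,\mathbb{Z})$, not just those coming from $A$; but since both sides of \eqref{es0.7a} vanish whenever $\alpha$ or $\beta$ lies in $f^*H_1(\mathbb{P}^1,\mathbb{Z}) = 0$ (as $\mathbb{P}^1$ has no $H_1$), and $H_1(X,\mathbb{Z})$ is spanned over $\mathbb{Q}$ by the eigenspaces of $\tilde\delta$, the identity on the $(1-q)$-eigenspace (i.e., on $P$) together with vanishing considerations suffices; alternatively one checks the correspondence identity directly on the generic fibre and takes closures, as in Remark~\ref{ptv1.5}.

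Once \eqref{es0.7a} is in hand, the third step is purely formal: apply Proposition~\ref{s0.7} with $q = 2^{n-1}$. It produces the homomorphism $\mu = \varphi_{\Theta}^{-1} \circ \hat s_0 : \hat P' \to JX$, asserts that it maps $\hat P'$ onto $P = P(X,\delta)$ and is an isogeny, that $s_0|_P : P \to P'$ is an isogeny, and — the conclusion we want — that
\[
\mu^* E_P = \frac{2^{n-1}}{d_1 d_p}\,\hat E_{P'}.
\]
The main obstacle, and the only place where real work is needed, is the second step: carefully justifying the passage from the correspondence-level identity of Lemma~\ref{s0.23}(ii) to the homomorphism-level identity \eqref{es0.7a} on \emph{all} of $H_1(X,\mathbb{Z})$, keeping track of which terms are killed on $A$ versus on $JX$ and ensuring the trace correspondence $T_1$ genuinely contributes nothing to the Riemann-form identity. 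Everything else is bookkeeping or a citation of Proposition~\ref{s0.7} and Lemma~\ref{s0.23}.
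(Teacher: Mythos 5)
Your proposal is correct and follows essentially the same route as the paper: the paper likewise reduces Theorem~\ref{s10.25} to Proposition~\ref{s0.7}, obtaining the key identity \eqref{es0.7a} from the adjunction relation $(\alpha,s_{0}(\beta))_{C}=({^{t}s}_{0}(\alpha),\beta)_{X}$ together with Lemma~\ref{s0.23}, and taking surjectivity of $s_{0}$ from Lemma~\ref{s0.23}(i). The only comment worth adding is that your concern about upgrading the identity from $A$ to all of $H_{1}(X,\mathbb{Z})$ is vacuous here: since $Y\cong\mathbb{P}^1$ one has $JY=0$, so $A=JX$ and the trace correspondence $T_{1}$ induces the zero map on $H_{1}(X,\mathbb{Z})$, whence ${^{t}s}_{0}\circ s_{0}=1-\tilde{\delta}$ holds on all of $H_{1}(X,\mathbb{Z})$ directly.
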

\begin{conjecture}\label{s10.25a}
Assume $p:C\overset{\pi}{\lto}C'\overset{g}{\lto}\mathbb{P}^1$ is a simply 
ramified $B_n$-covering and $C$ is irreducible. Let $\hat{P}'$ be the dual 
of the Prym variety $P(C,C')$. Then the homomorphism 
$\mu: \hat{P}'\to P(X,\delta)$ is an isomorphism. 

Equivalently, the 
polarization type of $\Theta_{JX}|_{P(X,\delta)}$ is 
$(2^{n-2},\ldots,2^{n-2})$ if $|\mathfrak{D}_{s}|=0$ or 
$|\mathfrak{D}_{s}|=2$ and 
$(2^{n-2},\ldots,2^{n-2},2^{n-1},\ldots,2^{n-1})$ where $2^{n-2}$ 
appears $\frac{1}{2}|\mathfrak{D}_{\ell}|+1-n$                         
times and $2^{n-1}$ appears $\frac{1}{2}|\mathfrak{D}_{s}|-1$ times, 
if $|\mathfrak{D}_{s}|>2$.
\end{conjecture}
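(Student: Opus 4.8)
By Theorem~\ref{s10.25} we are already given the canonical isogeny $\mu=\varphi_{\Theta}^{-1}\circ\hat s_0\colon\hat P'\to P(X,\delta)$ with $\mu^{*}E_{P}=\tfrac{2^{n-1}}{d_1d_p}\hat E_{P'}$, so the only remaining point is that $\mu$ is an \emph{isomorphism}. By the last assertion of Proposition~\ref{s0.7}, applied with $(A,\Theta)=(JX,\Theta_{JX})$, $(Z,\theta)=(P',\Theta_{JC}|_{P'})$, $f=s_0$ (whose defining property \eqref{es0.7a} was verified in \S\ref{s10.24a}), this is equivalent to
\[
(s_0)_{*}\colon H_1(X,\mathbb Z)\lra H_1\bigl(P(C,C'),\mathbb Z\bigr)\qquad\text{being surjective,}
\]
where $s_0(JX)\subset P'$ and $H_1(P',\mathbb Z)$ is a primitive sublattice of $H_1(C,\mathbb Z)$; the problem is thus purely topological. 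As a first step I would observe that the cokernel of $(s_0)_{*}$ is a finite abelian $2$-group: on the sublattice $\tilde\mu\bigl(H_1(\hat P',\mathbb Z)\bigr)\subset H_1\bigl(P(X,\delta),\mathbb Z\bigr)$, which has index $\deg\mu$, the restricted alternating form equals $\mu^{*}E_{P}=\tfrac{2^{n-1}}{d_1d_p}\hat E_{P'}$, and comparing Pfaffians with the polarization type of $\Theta_{JC}|_{P'}$ (a $2$-power type by \S\ref{s10.24}) forces $\deg\mu$ to be a power of $2$. (One sees even more concretely, from $s_0\circ{}^{t}s_0|_{P'}=2^{n-1}\mathrm{id}$ in Lemma~\ref{s0.23}(i), that the cokernel is killed by $2^{n-1}$.) So it suffices to prove surjectivity $2$-adically.

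\textbf{A deformation approach.} The simply ramified $B_n$-coverings $p\colon C\overset{\pi}{\lra}C'\overset{g}{\lra}\mathbb P^1$ with fixed invariants $(|\mathfrak D_s|,|\mathfrak D_\ell|)$ are parametrized by a Hurwitz space $\mathcal H$, over which the construction of Section~\ref{s10} produces $X$, the endomorphism $\delta$, the correspondence $S_0$, and hence the polarized varieties $\bigl(P(X,\delta),\Theta_{JX}|_{P(X,\delta)}\bigr)$, all in an algebraic family. Since the polarization type is a discrete invariant it is locally constant, hence constant on each connected component of $\mathcal H$, and one reduces the conjecture to verifying it at one convenient member of each component. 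This is known for $n=2,3$ (Pantazis, Theorem~\ref{s1.35}) and for $|\mathfrak D_s|=0$ (Proposition~\ref{s10.26}); for the remaining cases the plan is to (a) show $\mathcal H$ is connected --- by braid-group transitivity on generating tuples of $W(B_n)$ with the prescribed numbers of short and long reflections, for which the monodromy computation of Section~\ref{s10} (the monodromy group is all of $W(B_n)$, Corollary~\ref{s0.56}) is the natural starting point --- and (b) degenerate inside the admissible-cover compactification $\overline{\mathcal H}$: letting the points of $\mathfrak D_s$ collide in pairs with equal local monodromy $s_{\epsilon_j}$ makes $\pi$ \'{e}tale in the limit, and, using N\'{e}ron models and the theory of degenerating Jacobians and Prym varieties, one argues that the polarization type of $\Theta_{JX}|_{P(X,\delta)}$ at the generic point is inherited from the \'{e}tale limit, which is covered by Proposition~\ref{s10.26}.

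\textbf{A direct approach.} Alternatively one works $2$-adically with $(s_0)_{*}$ itself. Since $Y=\mathbb P^1$, $H_1(X,\mathbb Z)$ has an explicit set of generators built from the monodromy of $f\colon X\to\mathbb P^1$ over loops encircling the points of $\mathfrak D=\mathfrak D_s\cup\mathfrak D_\ell$, and the correspondence $S_0$, given on the nose by the linear map $S_0(e_A)=\sum_{j\notin A}f_{-j}+\sum_{j\in A}f_j$ of \S\ref{s0.21}, lets one compute $(s_0)_{*}$ on these generators. One expects to find that the image is exactly the sublattice of $H_1(P(C,C'),\mathbb Z)$ spanned by the Prym-type vanishing cycles supported over $\mathfrak D_s\cup\mathfrak D_\ell$, reducing the conjecture to the integral statement that these generate the whole Prym lattice. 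A variant avoiding cycles: compute $\chi\bigl(\Theta_{JX}|_{P(X,\delta)}\bigr)$ directly as an intersection number attached to the norm-endomorphism $1-\delta$ and compare it with the predicted product of elementary divisors; since $\mu$ is already known to be an isogeny with $\mu^{*}E_{P}$ a fixed scalar multiple of $\hat E_{P'}$, equality of these numbers forces $\deg\mu=1$.

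\textbf{The main obstacle.} In every variant the difficulty is the $2$-primary part. In the deformation approach it is concentrated in step (b): one must control the restricted theta polarization under an admissible degeneration that makes $\pi$ \'{e}tale --- where the limiting Prym--Tyurin variety is only semiabelian --- and separately establish the connectedness in step (a); neither is furnished by the generic constructions of the paper. In the direct approach it is the integral comparison between $H_1(P(C,C'),\mathbb Z)$ and the span of the ramification vanishing cycles, equivalently an intersection-number computation that does not already presuppose the answer. I would expect the deformation approach, anchored at the $n=3$ and \'{e}tale cases, to be the most tractable, possibly combined with the direct computation to handle whichever Hurwitz components resist degeneration.
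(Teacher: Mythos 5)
You are attempting to prove a statement that the paper itself records as Conjecture~\ref{s10.25a}, not as a theorem: the paper proves it only in the special cases $n=2$ (Proposition~\ref{s1.30}, after Mumford, Dalalyan and Pantazis), $n=3$ (Theorem~\ref{s1.35}), and \'{e}tale $\pi:C\to C'$ (Proposition~\ref{s10.26}), and explicitly states that it remains open for simple $B_n$-coverings of $\mathbb{P}^1$ with $n\geq 4$. So there is no proof in the paper to compare against, and your proposal is not one either --- which, to your credit, you essentially acknowledge. Your reformulation is correct and coincides with the paper's own framework: Theorem~\ref{s10.25} supplies the isogeny $\mu$ with $\mu^{*}E_{P}=\frac{2^{n-1}}{d_1d_p}\hat{E}_{P'}$, the last assertion of Proposition~\ref{s0.7} reduces the conjecture to surjectivity of $(s_0)_{*}$ on integral homology, and the remark that the cokernel is killed by $2^{n-1}$ follows from Lemma~\ref{s0.23}(i). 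This is precisely how the paper handles the \'{e}tale case, where surjectivity of $(s_0|_{X_i})_{*}$ is imported from the polarized isomorphisms of \cite{K}.

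The genuine gap is that from this reduction onward nothing is established. Your deformation approach leaves open both of its load-bearing steps: connectedness of the Hurwitz space of simply ramified $B_n$-coverings with prescribed $(|\mathfrak{D}_s|,|\mathfrak{D}_\ell|)$, and the control of the restricted theta polarization through an admissible degeneration in which $\pi$ becomes \'{e}tale and the limiting Prym--Tyurin variety is only semiabelian. Your direct approach leaves open exactly the integral lattice comparison that constitutes the conjecture; the ``variant avoiding cycles'' is circular as stated, since computing $\chi(\Theta_{JX}|_{P(X,\delta)})$ independently of $\mu$ is the unsolved problem. Note also that the one nontrivial ramified case the paper does settle, $n=3$, is proved by neither of your routes: it uses the factorization $f:X\to\tilde{Y}\to Y$ of \S\ref{s2.6}, Lemma~\ref{s1.34} identifying $P(X,\delta)=Ker(Nm_g:P(X,X')\to P(\tilde{Y},Y))^{0}$ and $(\delta+3)P(X,X')=g^{*}P(\tilde{Y},Y)$, and the relation $K(\Theta|_B)\cong K(\Theta|_P)$ for complementary subvarieties to compute the polarization type outright, after which the isomorphism follows by comparing types with $2\hat{E}_{P'}$. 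Whether that mechanism, or either of your strategies, extends to $n\geq 4$ is exactly what the paper leaves unresolved.
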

The case $n=2$ is known to be true and is due to Mumford \cite{Mum}
(the case $|\mathfrak{D}_{s}|=0$), Dalalyan \cite{Da} (the case 
$|\mathfrak{D}_{s}|=2$) 
 and 
Pantazis \cite{Pa} (the case $|\mathfrak{D}_{s}|>2$) . We include a simple 
proof in 
Proposition~\ref{s1.30}. We give a proof in the case $n=3, 
|\mathfrak{D}_{s}|>0$  in Theorem~\ref{s1.35} below. 
The case of \'{e}tale $\pi: C\to C'$ is verified in the next proposition. 
Conjecture~\ref{s10.25a} remains open for simple $B_n$-coverings of $\mathbb{P}^1$ with $n\geq 4$ (cf. Remark~\ref{ptv1.2}).
\begin{pro}\label{s10.26}
Let $p:C\overset{\pi}{\lto}C'\overset{g}{\lto}\mathbb{P}^1$ be a 
simply ramified $B_n$-covering with irreducible $C$. Assume $\pi:C\to 
C'$ is \'{e}tale. Let $f:X\to \mathbb{P}^1$ be the covering of degree 
$2^{n}$ associated with the spinor weight. Then 
$\Theta_{JX}|_{P(X,\delta)}$ has polarization type 
$(2^{n-2},\ldots,2^{n-2})$.
\end{pro}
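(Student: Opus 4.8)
The plan is to apply Theorem~\ref{s10.25} and reduce the claim to showing that the isogeny $\mu:\hat{P}'\to P(X,\delta)$ is an isomorphism, equivalently (by Proposition~\ref{s0.7}) that the map on homology $(s_0)_*:H_1(X,\mathbb{Z})\to H_1(C,\mathbb{Z})$, followed by projection to $P'=P(C,C')$, is \emph{surjective onto the relevant sublattice}. When $\pi:C\to C'$ is \'{e}tale we have $\mathfrak{D}_\ell=\emptyset$ or more precisely $\mathfrak{D}_s=\emptyset$ (the short-root branch points are exactly the branch points of $\pi$), so by \S\ref{s10.24} the polarization type of $\Theta_{JC}|_{P'}$ is $(2,\ldots,2)$, i.e. $d_1=d_p=2$, and Theorem~\ref{s10.25} gives $\mu^*E_P=\frac{2^{n-1}}{4}\hat{E}_{P'}=2^{n-3}\hat{E}_{P'}$. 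Since $\hat{E}_{P'}$ is a principal polarization (the dual of a polarization of type $(2,\ldots,2)$ on an abelian variety of that dimension is principal by the formula in \S\ref{s0.5}), if $\mu$ is an isomorphism then $E_P$ has type $(2^{n-3},\ldots,2^{n-3})\cdot$ wait — one must be careful: we want type $(2^{n-2},\ldots,2^{n-2})$, so in fact the computation shows $\mu^*E_P = 2^{n-2}\cdot(\text{principal form on }\hat P')$, and I should recheck the bookkeeping: with $d_1d_p=4$ the pullback is $\frac{2^{n-1}}{4}\hat E_{P'}=2^{n-3}\hat E_{P'}$, and since $\hat E_{P'}$ itself is the dual polarization of a $(2,\dots,2)$-polarization it equals $\frac12$ times a principal form only after rescaling — the cleanest route is simply to prove $\mu$ is an isomorphism and then read off the type directly from $\mu^*E_P=\frac{2^{n-1}}{d_1d_p}\hat E_{P'}$ together with the known type of $\hat E_{P'}$, which in the $(2,\dots,2)$ case is $(1,\dots,1)$.

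So the real content is the surjectivity criterion in Proposition~\ref{s0.7}: $\mu$ is an isomorphism iff $\tilde f:\Lambda\to\Lambda_Z$ is surjective, where here $\tilde f=(s_0)_*$ restricted appropriately. First I would recall from \S\ref{s0.21} the explicit description of $S_0$ on fibers, $S_0(z_A)=\sum_{j\notin A}x'_j+\sum_{j\in A}x_j$, and from Lemma~\ref{s0.23}(iii) the identity $S_0\,{^tS}_0(x_k)=2^{n-2}(x_k-i(x_k))+2^{n-2}T_2(x_k)$. Restricted to $P'=P(C,C')=(1-i)JC$, this says $s_0\,{^ts}_0|_{P'}=2^{n-1}\,\mathrm{id}_{P'}$ (Lemma~\ref{s0.23}(i)). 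The \'{e}taleness of $\pi$ is what I expect to make the homology map surjective: when $\pi:C\to C'$ is \'{e}tale, $H_1(C,\mathbb{Z})$ is generated by lifts of loops in $C'$ together with the ``connecting'' class, and the anti-invariant part $H_1(C,\mathbb{Z})^-$ sits inside $H_1(X,\mathbb{Z})$ via $({^ts}_0)_*$ with index controlled by $2^{n-2}$ rather than $2^{n-1}$; dually one gets that $(s_0)_*$ hits all of the relevant lattice. Concretely, I would argue that the composite isogeny $\hat P'\xrightarrow{\mu}P(X,\delta)\xrightarrow{s_0|_P}P'$ has degree a power of $2$, compute this degree two ways, and use the \'{e}tale hypothesis to pin it down.

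The technically cleanest approach, and the one I would actually write, is a local/monodromy computation rather than a homological one. Recall from \S\ref{s0.3} that the local monodromy of $f:X\to Y$ is a product of $2^{n-1}$ transpositions over $\mathfrak{D}_s$ and $2^{n-2}$ transpositions over $\mathfrak{D}_\ell$; with $\mathfrak{D}_s=\emptyset$ only the latter occur. The curve $X$ decomposes as $X\xrightarrow{g}\tilde Y\xrightarrow{h}Y$ as in \S\ref{s2.6}, and since $\pi$ is \'{e}tale the monodromy image $G$ lies in $W(D_n)$ (by Corollary~\ref{s0.57}, $\mathfrak{D}_s=\emptyset$ forces $G=W(D_n)$), hence $\tilde Y=Y_1\sqcup Y_2$ is disconnected and $X=X_1\sqcup X_2$ accordingly. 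I would then show $P(X,\delta)\cong P(X_1,X_1')$-type Prym data on a single component, identify $s_0$ restricted to one component, and compare $H_1$ lattices directly using that $\pi$ being \'{e}tale means $C\to C'$ contributes no ramification to spoil integrality. The main obstacle I anticipate is exactly this last lattice comparison: proving that no extra factor of $2$ creeps in, i.e. that $(s_0)_*$ is surjective and not merely of $2$-power index. I expect this follows from Lemma~\ref{s0.23}(ii)--(iii) by computing the elementary divisors of $({^tS}_0)(S_0)$ and $(S_0)({^tS}_0)$ as integer matrices and noting that in the \'{e}tale case the factor is $2^{n-2}$ on the nose on the anti-invariant lattice, which forces $\deg\mu=1$; I would carry out that elementary-divisor bookkeeping as the one genuinely computational step, invoking Remark~\ref{s0.15a} to normalize the bilinear form conveniently.
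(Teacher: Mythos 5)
Your overall strategy coincides with the paper's: reduce via Theorem~\ref{s10.25} and Proposition~\ref{s0.7} to the surjectivity of $(s_0)_*$ on homology, and use Corollary~\ref{s0.57} (monodromy group $W(D_n)$ when $\mathfrak{D}_s=\emptyset$) to split $X$ into the two semispinor components. However, two things go wrong. The first is a bookkeeping error that changes the answer: the dual of a polarization of type $(2,\ldots,2)$ is \emph{not} principal. By the formula in \S\ref{s0.5} its type is $(d_1,\,d_1d_p/d_{p-1},\ldots,d_p)=(2,\ldots,2)$. You assert that $\hat E_{P'}$ has type $(1,\ldots,1)$, which together with $\mu^{*}E_{P}=\tfrac{2^{n-1}}{4}\hat E_{P'}=2^{n-3}\hat E_{P'}$ would yield type $(2^{n-3},\ldots,2^{n-3})$ — not the claimed $(2^{n-2},\ldots,2^{n-2})$. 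With the correct type $(2,\ldots,2)$ for $\hat E_{P'}$ the computation gives $2^{n-3}\cdot(2,\ldots,2)=(2^{n-2},\ldots,2^{n-2})$ as required.

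The more serious issue is that the key step — surjectivity of $(s_0|_{X_i})_{*}:H_1(X_i,\mathbb{Z})\to H_1(P',\mathbb{Z})$ for at least one component — is never established; you only anticipate that it will follow from ``elementary-divisor bookkeeping'' applied to Lemma~\ref{s0.23}(ii)--(iii). Those identities describe the composites ${}^{t}S_0\cdot S_0$ and $S_0\cdot{}^{t}S_0$ as correspondences, i.e.\ as linear maps on the free modules generated by the fibers; a factorization $M=AB$ with known elementary divisors of $M$ does not determine the elementary divisors of $A$ or $B$, and in any case the induced map on $H_1$ of the curves is not the fiber map, so no such local computation can by itself rule out a residual index $2$. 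The paper closes exactly this gap by invoking a nontrivial external input: for $W(D_n)$-coverings the semispinor components carry Prym--Tyurin structures $(X_i,\delta_i)$ of exponent $2^{n-3}$, and by \cite{K}, Proposition~8.6.14 and Theorem~6.4(iii), $P'=P(C,C')$ is isomorphic \emph{as a polarized abelian variety} to $P(X_i,\delta_i)$; surjectivity of $(s_0|_{X_i})_{*}$ is read off from that polarized isomorphism. Without this citation or a genuine substitute for it, your argument does not close.
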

\begin{proof}
By Corollary~\ref{s0.57} the monodromy group of $p:C\to \mathbb{P}^1$ 
is $W(D_n)$. Hence $X=X_{0}\sqcup X_{1}$, where $X_{0}\to 
\mathbb{P}^1$ and $X_{1}\to \mathbb{P}^1$ are coverings of degree 
$2^{n-1}$ which correspond to the two semispinor weights of the root 
system of type $D_n$ (cf. \S\ref{s2.6}). 
One has $JX=JX_{0}\times JX_{1}$. Let $P=P(X,\delta)$.
The restriction of $s_{0}:JX_{0}\times JX_{1}\to P(C,C')$ on $JX_{0}$ and 
$JX_{1}$ is studied in Section~8.6 of \cite{K}, where correspondences on $X_i$
of exponent $2^{n-3}$ and associated endomorphisms $\delta_i:JX_i\to JX_i$ are
defined. Let $P'=P(C,C')$.
>From the polarized 
isomorphism of $P'$ with the Prym-Tyurin varieties 
$P(X_{0},\delta_{0})$ and $P(X_{1},\delta_{1})$ proved there (see 
\cite{K}, Proposition~8.6.14 and Theorem~6.4(iii)), it follows that 
$(s_{0}|_{X_{i}})_{*}:H_{1}(JX_{i},\mathbb{Z})\to 
H_{1}(P',\mathbb{Z})$ is surjective for $i=0$ or $i=1$. Applying 
Theorem~\ref{s10.25} and
Proposition~\ref{s0.7}   we conclude that 
$\mu: \hat{P}'\to P$ is an isomorphism and 
$\mu^{*}E_{P}=\frac{2^{n-1}}{4}\hat{E}_{P'}$. Therefore $E_{P}$ has 
type $(2^{n-2},\ldots,2^{n-2})$. 
\end{proof}

\section{Coverings with monodromy group contained in $W(R)$, with 
$rank(R)\leq 
3$}\label{s1}

\begin{block}\label{s1.2} {\bf Bigonal construction, $W=W(B_2)$} 
\cite{D2,Pa}.
Let 
$W=W(R)$ where $R$ is of type $B_{2}$. Consider the two fundamental 
weights $\omega_{1}$ and $\omega_{2}$. We are in the situation of 
\S\ref{s0.1} and \S\ref{s0.3}. Let 
$p:C\overset{\pi}{\lto}C'\overset{g}{\lto}Y$ be a simple 
$B_2$-covering with irreducible $C$. Equivalently, $\deg \pi = \deg g = 2$, 
$\mathfrak{D}_{s}=g(Discr(C\to C'))$ and 
$\mathfrak{D}_{\ell}=Discr(C'\to Y)$ are nonempty, and 
$\mathfrak{D}_{s}\cap \mathfrak{D}_{\ell}=\emptyset$. 

Let $f:X\to Y$ 
be the covering of degree 4 associated with $\omega_{2}$. The 
monodromy group of $p:C\to Y$ is $W(B_2)$, so $X$ is irreducible. Let 
$\sigma:X\to X$ be the involution defined in \S\ref{s2.1} and let 
$f:X\overset{\pi'}{\lto}X'\overset{g'}{\lto}Y$ be the corresponding decomposition of 
$f$. Calculating the action of the reflections on $W\omega_{2}$ one 
verifies that $Discr(X'\to Y)=\mathfrak{D}_{s}$ and $g'(Discr(X\to 
X'))=\mathfrak{D}_{\ell}$. Here the isogenous Prym-Tyurin varieties are the 
ordinary Prym varieties $P(C,C')$ and $P(X,X')$. 
\end{block}
\renewcommand{\theenumi}{\roman{enumi}}
\begin{pro}\label{s1.30}
$\text{}$
\begin{enumerate}
\item 
The polarization types of $\Theta_{JC}|_{P(C,C')}$ and 
$\Theta_{JX}|_{P(X,X')}$ are respectively
\begin{equation*}
\begin{split}
&(\underbrace{1,\ldots,1}_{\frac{1}{2}|\mathfrak{D}_{s}|-1},
\underbrace{2,\ldots,2}_{\frac{1}{2}|\mathfrak{D}_{\ell}|-1},
\underbrace{2,\ldots,2}_{2g(Y)}),\\
&(\underbrace{1,\ldots,1}_{\frac{1}{2}|\mathfrak{D}_{\ell}|-1},
\underbrace{2,\ldots,2}_{\frac{1}{2}|\mathfrak{D}_{s}|-1},
\underbrace{2,\ldots,2}_{2g(Y)}).
\end{split}
\end{equation*}
\item 
If $Y=\mathbb{P}^1$, then $\mu:\hat{P}(C,C')\to P(X,X')$ defined in 
Theorem~\ref{s10.25} is an isomorphism.
\end{enumerate}
\end{pro}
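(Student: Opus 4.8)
The plan is to establish (i) by a direct genus computation and then deduce (ii) from it.

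\smallskip
\noindent\emph{Part (i).} Both $P(C,C')$ and $P(X,X')$ are Pryms of \emph{ramified} double coverings, so I would read off the polarization types from the standard description recalled in \S\ref{s10.24}: for a ramified double covering $\tilde Y\to Y$, the form $\Theta_{J\tilde Y}|_{P(\tilde Y,Y)}$ has type $(1,\ldots,1,2,\ldots,2)$ with $2$ occurring $g(Y)$ times and $1$ occurring $g(\tilde Y)-2g(Y)$ times. First I would compute genera by Hurwitz: since $g:C'\to Y$ is ramified exactly over $\mathfrak{D}_{\ell}$ one gets $g(C')=\tfrac12|\mathfrak{D}_{\ell}|+2g(Y)-1$, and since $\pi:C\to C'$ is ramified over $|\mathfrak{D}_{s}|$ points one gets $g(C)=2g(C')-1+\tfrac12|\mathfrak{D}_{s}|$, hence $g(C)-2g(C')=\tfrac12|\mathfrak{D}_{s}|-1$; substituting gives the type of $\Theta_{JC}|_{P(C,C')}$, rewritten as in the statement by splitting off the $2g(Y)$ copies of $2$. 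For $X\to X'$ I would use the symmetry of the bigonal construction recorded in \S\ref{s1.2}: the reflection computation on $W\omega_2$ shows $g':X'\to Y$ is ramified over $\mathfrak{D}_{s}$ and $\pi':X\to X'$ over $g'^{-1}(\mathfrak{D}_{\ell})$; combining $g(X')=\tfrac12|\mathfrak{D}_{s}|+2g(Y)-1$ with the genus of $X$ from \eqref{es2.1b} forces $\pi'$ to have $|\mathfrak{D}_{\ell}|$ branch points, so $g(X)-2g(X')=\tfrac12|\mathfrak{D}_{\ell}|-1$, and the second type follows with the roles of $\mathfrak{D}_{s}$ and $\mathfrak{D}_{\ell}$ interchanged.

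\smallskip
\noindent\emph{Part (ii).} Assume $Y=\mathbb{P}^1$, so $A=Ker(Nm_f)^0=JX$. First I would observe that for $n=2$ the correspondence $D$ of \eqref{eqn15} satisfies $D(x_A)=x_{\overline A}$, hence $D$ is the graph of $\sigma$ and $\delta=\sigma$; therefore $P(X,\delta)=(1-\sigma)JX=P(X,X')$, and the isogeny $\mu:\hat{P}(C,C')\to P(X,\delta)$ produced by Theorem~\ref{s10.25} is exactly the map in the statement, with $\mu^{*}E_{P}=\tfrac{2}{d_1d_p}\hat{E}_{P'}$, where $(d_1,\ldots,d_p)$ is the type of $\Theta_{JC}|_{P(C,C')}$ found in (i) and $p=\dim P(C,C')=\tfrac12(|\mathfrak{D}_{s}|+|\mathfrak{D}_{\ell}|)-2$. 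It then remains only to check $\deg\mu=1$, which I would get by comparing Pfaffians. For an isogeny $\psi$ one has $\mathrm{Pf}(\psi^{*}F)=\deg(\psi)\,\mathrm{Pf}(F)$, and by \S\ref{s0.5} the dual polarization has $\mathrm{Pf}(\hat{E}_{P'})=(d_1d_p)^{p}/\prod_i d_i$, so
\[
\deg(\mu)\cdot\mathrm{Pf}(E_{P})=\Big(\tfrac{2}{d_1d_p}\Big)^{p}\mathrm{Pf}(\hat{E}_{P'})=\frac{2^{p}}{\prod_i d_i}.
\]
By (i) one has $\prod_i d_i=2^{\frac12|\mathfrak{D}_{\ell}|-1}$ and $\mathrm{Pf}(E_{P})=2^{\frac12|\mathfrak{D}_{s}|-1}$, and since $p=\tfrac12|\mathfrak{D}_{s}|+\tfrac12|\mathfrak{D}_{\ell}|-2$ the product $\prod_i d_i\cdot\mathrm{Pf}(E_{P})$ equals $2^{p}$; hence $\deg\mu=1$ and $\mu$ is an isomorphism. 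Equivalently, one may argue via Proposition~\ref{s0.7}: the identity $s_0\,{}^{t}s_0=2\cdot\mathrm{id}$ on $P(C,C')$ forces $\mathrm{coker}\big((s_0)_{*}:H_1(X,\mathbb{Z})\to H_1(P(C,C'),\mathbb{Z})\big)$ to be a $2$-group of order $\deg\mu$, and the Pfaffian count shows this order is $1$.

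\smallskip
\noindent\emph{Where the difficulty lies.} There is no deep obstacle here; the two places demanding care are the branch-locus bookkeeping for $X'\to Y$ and $X\to X'$ in (i) (the reflection computation on $W\omega_2$) and the dual-polarization/Pfaffian bookkeeping in (ii). The point worth stressing is that, once (i) supplies both polarization types, Theorem~\ref{s10.25} reduces (ii) to the purely numerical identity $\deg\mu=1$, so one never has to verify surjectivity of $(s_0)_{*}$ on homology directly.
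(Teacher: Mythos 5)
Your argument is correct and follows the paper's strategy in its essentials: part (i) is exactly the paper's proof (the paper only says ``this follows from the Hurwitz formula''; your bookkeeping of the branch loci of $X'\to Y$ and $X\to X'$ via \S\ref{s1.2} is the intended computation), and part (ii) rests, as in the paper, on the isogeny $\mu$ of Theorem~\ref{s10.25} together with the knowledge of both polarization types from (i). The one genuine difference is the endgame of (ii): the paper splits into three cases ($|\mathfrak{D}_{s}|>2$ and $|\mathfrak{D}_{\ell}|>2$; $|\mathfrak{D}_{s}|=2$, where $P(X,X')=JX$ is principally polarized; $|\mathfrak{D}_{\ell}|=2$, where $P(C,C')=JC$) and in each case matches the type of $E_P$ against that of $\frac{2}{d_1d_p}\hat{E}_{P'}$ directly, whereas you compute $\deg\mu$ once and for all from the Pfaffian identity $\deg(\mu)\,\mathrm{Pf}(E_P)=\bigl(\tfrac{2}{d_1d_p}\bigr)^p\mathrm{Pf}(\hat{E}_{P'})=2^p/\prod_i d_i$ and the numerical coincidence $\prod_i d_i\cdot\mathrm{Pf}(E_P)=2^{\frac12|\mathfrak{D}_{\ell}|-1}\cdot 2^{\frac12|\mathfrak{D}_{s}|-1}=2^p$. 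This is a genuine streamlining: it handles the degenerate cases $|\mathfrak{D}_{s}|=2$ and $|\mathfrak{D}_{\ell}|=2$ uniformly (as one checks, the count still gives $\deg\mu=1$ there), at the mild cost of invoking the formula for the type of the dual polarization from \S\ref{s0.5} rather than just eyeballing equality of types. Your preliminary observation that for $n=2$ the correspondence \eqref{eqn15} reduces to the graph of $\sigma$, so that $P(X,\delta)=P(X,X')$, is also needed and correct ($|A|+|B|-2|A\cap B|-1=|A\triangle B|-1$ equals $1$ exactly when $B=\overline{A}$).
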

\begin{proof}
(i) This follows from the Hurwitz formula (cf. \S\ref{s10.24}).

(ii) Let $P=P(X,X')$ and $P'=P(C,C')$. Let us consider the isogeny 
$\mu: \hat{P}'\to P$ from Theorem~\ref{s10.25}. Let $E_{P}$ and 
$E_{P'}$ be the Riemann forms of $\Theta_{JX}|_{P}$ and 
$\Theta_{JC}|_{P'}$ respectively. Suppose first that 
$|\mathfrak{D}_{s}|>2$ and $|\mathfrak{D}_{\ell}|>2$. By (i) the 
polarization type of $E_{P}$ is the same as that of $\hat{E}_{P'}$. We 
have by Theorem~\ref{s10.25} that $\mu^{*}E_{P}=\hat{E}_{P'}$. 
Therefore $\mu:\hat{P}'\to P$ is a polarized isomorphism. 

Let $|\mathfrak{D}_{s}|=2$. Then $X'\cong \mathbb{P}^1, P(X,X')=JX$ and 
$E_{P}=E_{JX}$ is a principal polarization. Here $E_{P'}=2E_{\Xi}$ for a 
principal polarization $\Xi$ on $P'=P(C,C')$, $\hat{E}_{P'}$ has type 
$(2,\ldots,2)$ and $\psi$ of the decomposition
\[
\varphi_{2\Xi}:P'\overset{2\, id}{\lto}P'\overset{\psi}{\lto}\hat{P}'
\]
determines a polarized isomorphism between $(P',E_{\Xi})$ and 
$(\hat{P}',\frac{1}{2}\hat{E}_{P'})$. By Theorem~\ref{s10.25} one 
has $\mu^{*}E_{JX}=\frac{1}{2}\hat{E}_{P'}$. Therefore $P'\cong 
\hat{P}'\cong JX$. 

Let $|\mathfrak{D}_{\ell}|=2$. Then $C'\cong 
\mathbb{P}^1, P'=JC$ and $E_{P}=2E_{\Xi}$ for a principal polarization 
$\Xi$. Identifying $JC$ with its dual $\hat{P}'$ we have for 
$\mu:JC\to P$ the formula $\mu^{*}E_{P}=2E_{JC}$. Therefore $\mu$ is a 
polarized isomorphism of $JC$ with $(P(X,X'),\Xi)$.
\end{proof}
\begin{rem}\label{s1.32}
The isomorphism $\hat{P}(C,C')\cong P(X,X')$ is due to Pantasis 
\cite{Pa} and the isomorphism $P(C,C')\cong JX$, when 
$|\mathfrak{D}_{s}|=2$, is due to Dalalyan \cite{Da}. Our proof, based 
on Theorem~\ref{s10.25}, is new and simpler than the original 
ones.
\end{rem}

\begin{block}\label{s1.5} {\bf Recillas' construction, $W=W(A_3)=W(D_3)$}.
Let $p:C\to Y$  be a simply ramified covering of degree 4 with 
irreducible $C$. Suppose it cannot be 
decomposed through an \'{e}tale covering $C'\to Y$ of degree 2. Then 
the covering is primitive, so its monodromy group is $S_4$ 
(cf. \S\ref{s0.50}).  Furthermore
$p^{*}:JY\to JC$ is injective, $A=Ker(Nm_{p})$ is connected,
 and the 
polarization type of $\Theta_{JC}|_{A}$ is 
$(1,\ldots,1,4,\ldots,4)$ where $4$ appear $g(Y)$ times (cf. \cite{BL2}, 
Corollary 12.1.5). In the 
set-up of the introduction we have $W=S_4=W(R)$, where $R$ is of type 
$A_3$ and $\lambda =\omega_{1}$. Replacing $\lambda$ by the second 
fundamental weight $\lambda'=\omega_{2}$ one obtains a covering  
$f:X\overset{\pi}{\lto}X'\overset{g}{\lto}Y$, where $X$ is irreducible, $\pi$ 
is 
\'{e}tale of degree $2$ and $g$ is a simply ramified covering of degree 3. 
Constructing an appropriate correspondence (cf. \cite{K}, \S8.5.3) one easily verifies that 
$X$ is isomorphic to a curve in $C^{(2)}$, namely the closure of 
$\{x+y\in C^{(2)}|x\neq y, p(x)=p(y)\}$. This construction, when $Y\cong \mathbb{P}^1$,
is due to Recillas \cite{Re}.
Here we have
$i = 0, P(C,i)=A$ and $\Theta_{JC}|_{P(C,i)}$ has polarization type as above, while  
 $P(X,\delta)$ is the ordinary Prym variety 
$P(X,X')$, so $\Theta_{JX}|_{P(X,X')}$ has 
polarization 
type  $(2,\ldots,2)$. Summarizing one obtains the following statement, due to Recillas when $Y\cong \mathbb{P}^1$ 
\cite{Re}.
\end{block}
\begin{pro}\label{ptv1.3}
Let $p:C\to Y$  be a simply ramified covering of degree 4 with irreducible $C$, which cannot be decomposed through an 
\'{e}tale covering of degree 2. Let $f:X\overset{\pi}{\lto}X'\overset{g}{\lto}Y$ be the associated
covering of degree 6 obtained by the Recillas construction. Then $A=Ker(Nm_p:JC\to JY)$ and $P(X,X')$ are
isogenous Abelian varieties and the polarization types of $\Theta_{JC}|_{A}$ and $\Theta_{JX}|_{P(X,X')}$
are respectively
\begin{equation*}
\begin{split}
&(1,\ldots,1,
\underbrace{4,\ldots,4}_{g(Y)}),\\
&(2,\ldots,2,2\ldots,2)
\end{split}
\end{equation*}
If $Y\cong \mathbb{P}^1$, then $J(C)$ and $P(X,X')$ are isomorphic as principally polarized Abelian varieties.
\end{pro}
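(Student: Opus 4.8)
The plan is to deduce the statement from the general machinery of Sections~\ref{s0} and~\ref{s0a} together with the observations already recorded in \S\ref{s1.5}. Since $p:C\to Y$ is simply ramified of degree $4$ and admits no factorization through an \'etale double covering, it is primitive, so by \cite{BE} its monodromy group is the full symmetric group $S_4=W(A_3)$. Hence the degree-$6$ covering $f:X\to Y$ attached to the second fundamental weight $\omega_2$ is connected, $\pi:X\to X'$ is \'etale of degree $2$, $g:X'\to Y$ is simply ramified of degree $3$, and $X$ is the closure in $C^{(2)}$ of $\{x+y\mid p(x)=p(y),\ x\neq y\}$ (cf. \cite{K}, \S8.5.3). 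We are thus in the situation of \S\ref{s0.16a} with $W=W(A_3)$, $\lambda'=\omega_1$, $\lambda=\omega_2$: the endomorphism $i$ attached to $\omega_1$ vanishes, so $P(C,i)=A=Ker(Nm_p)$, while the endomorphism $\delta$ attached to $\omega_2$ is of exponent $q=2$ and $P(X,\delta)=P(X,X')$. Proposition~\ref{s0.16} then gives at once that $A$ and $P(X,X')$ are isogenous.

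Next I would pin down the two polarization types. As recalled in \S\ref{s1.5}, $p^{*}:JY\to JC$ is injective and $A$ is connected, and $\Theta_{JC}|_A$ has type $(1,\dots,1,4,\dots,4)$ with $4$ occurring $g(Y)$ times by \cite{BL2}, Corollary~12.1.5. Since $\pi:X\to X'$ is \'etale, $\Theta_{JX}|_{P(X,X')}$ has type $(2,\dots,2)$ by the discussion in \S\ref{s10.24}; a Hurwitz-formula count confirms that the two dimensions agree, as they must. This yields the displayed polarization types.

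Finally, assume $Y\cong\mathbb{P}^1$. Then $JY=0$, so $A=JC$ and $\Theta_{JC}|_A=\Theta_{JC}$ is a principal polarization. I would run the argument of \S\ref{s10.24a} with the correspondence $S_0\in Div(X\times C)$ of \cite{K}, \S8.5.3 in place of the one built from the spinor weight: it induces a surjective homomorphism $s_0:JX\to JC$, and the analogue of Lemma~\ref{s0.23}, obtained from the general identities of \cite{K}, Lemma~6.5.1, gives the Riemann-form relation $E_{JC}(s_0\alpha,s_0\beta)=E_{JX}\big((1-\tilde\delta)\alpha,\beta\big)$ on $H_1(X,\mathbb{Z})$, with $q=2$. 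Proposition~\ref{s0.7}, applied with $(A,\Theta)=(JX,\Theta_{JX})$, $(Z,\theta)=(JC,\Theta_{JC})$ and $f=s_0$, then produces an isogeny $\mu:\widehat{JC}\to P(X,\delta)=P(X,X')$ with $\mu^{*}E_{P}=\frac{q}{d_1d_p}\hat E_{\Theta_{JC}}$; since $\Theta_{JC}$ is principal this canonically identifies $\widehat{JC}$ with $JC$ and $\hat E_{\Theta_{JC}}$ with $E_{JC}$, so $\mu^{*}E_{P}=2E_{JC}$ for $\mu:JC\to P(X,X')$. Writing $E_{P}=\Theta_{JX}|_{P(X,X')}=2\Xi$ with $\Xi$ a principal polarization (legitimate because $E_{P}$ is of type $(2,\dots,2)$), we obtain $\mu^{*}(2\Xi)=2\Theta_{JC}$; comparing Euler characteristics gives $\deg\mu=1$, so $\mu$ is an isomorphism of Abelian varieties, and cancelling $2$ in the torsion-free N\'eron--Severi group gives $\mu^{*}\Xi=\Theta_{JC}$. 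Hence $\mu$ is an isomorphism $(JC,\Theta_{JC})\overset{\sim}{\lto}(P(X,X'),\Xi)$ of principally polarized Abelian varieties, recovering Recillas' theorem \cite{Re}.

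The step I expect to be the real work is the Riemann-form relation for $s_0$; everything else is either formal (Propositions~\ref{s0.16} and~\ref{s0.7}) or a dimension-and-monodromy count. Verifying it amounts to carrying out, for the pair $(\omega_1,\omega_2)$ of type $A_3$, the same computation of $\,{}^{t}S_0\cdot S_0\,$ and $\,S_0\cdot{}^{t}S_0\,$ that Lemma~\ref{s0.23} performs for the spinor weight of type $B_n$; one may either quote \cite{K}, \S8.5.3 or reproduce the short calculation, the identities of \cite{K}, Lemma~6.5.1 making the check mechanical once the correct $W$-equivariant map $S_0$ has been written down.
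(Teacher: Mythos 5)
Your proposal is correct, and for the isogeny and the two polarization types it follows exactly the paper's route (\S\ref{s1.5}): primitivity forces monodromy $S_4=W(A_3)$, the type $(1,\ldots,1,4,\ldots,4)$ comes from \cite{BL2}, Corollary~12.1.5, and $(2,\ldots,2)$ from the fact that $\pi:X\to X'$ is \'etale. The genuine difference is in the last assertion: the paper does not prove the statement for $Y\cong\mathbb{P}^1$ at all but simply attributes it to Recillas \cite{Re}, whereas you supply a self-contained proof by running Proposition~\ref{s0.7} with the incidence correspondence, in precisely the spirit of the paper's own new proof of Pantazis' theorem (Proposition~\ref{s1.30}). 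That part of your argument does check out: taking $S_0\in Div(X\times C)$ to be $S_0(x+y)=x+y$, a direct computation on a fibre $p^{-1}(y)=\{x_1,\ldots,x_4\}$ gives ${}^{t}S_0\cdot S_0=E-\Sigma+T_1$ (with $\Sigma$ the graph of the involution $\sigma$, so $\delta=\sigma$, $q=2$) and $S_0\cdot{}^{t}S_0=2E+T_2$; since $JY=0$ the trace terms die, so $s_0:JX\to JC$ is surjective and the hypothesis \eqref{es0.7a} of Proposition~\ref{s0.7} holds with $q=2$, yielding $\mu^{*}E_P=2E_{JC}$. Your Pfaffian/degree count then correctly forces $\deg\mu=1$ and $\mu^{*}\Xi=E_{JC}$. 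So your approach buys an independent proof of Recillas' isomorphism from the general duality criterion, at the cost of having to verify the correspondence identities for the pair $(\omega_1,\omega_2)$ of type $A_3$ — a short computation you rightly identify as the only real work, and which does go through.
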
 
\begin{block}\label{ptv1.4}
Replacing $\lambda=\omega_{1}$ by $\lambda'=\omega_{3}$ one obtains a 
covering $f:X\to Y$ which is equivalent to $p:C\to Y$, since $-id$ is 
$W(A_{3})$-equivariant and transforms $\omega_{1}$ to $\omega_{3}$. 
Hence there is a polarized isomorphism between $(A,\Theta_{JC}|_{A})$ 
and $(A',\Theta_{JX}|_{A'})$, where $A'=Ker(Nm_{f}:JX\to JY)$.

Inverting Recillas' construction one covers the case $W=W(D_3), 
\lambda=\omega_{1}, \lambda'=\omega_{2}$ or  
$\lambda'=\omega_{3}$
where $\omega_{1}, 
\omega_{2}, \omega_{3}$ are the fundamental weights of a root system of 
type $D_3$. 
\end{block}

\begin{block}\label{s1.5a}
The next case we consider is $W=W(R)$, where $R$ is of type $B_{3}$, 
$\lambda=\omega_{1}$ and $\lambda' = \omega_{3}$
 (cf. Section~\ref{s10}). 
Let $p:C\overset{\pi}{\lto}C'\overset{p'}{\lto}Y$
be a simple $B_3$-covering. Since $\deg p'=3$, so $p':C'\to Y$ is 
primitive, the simplicity of the covering $p$ is  equivalent to the condition that $p:C\to 
Y$ is a simply ramified  $B_3$-covering and both $\pi:C\to C'$ and $p':C'\to Y$ are ramified. 
Let $f:X\to Y$ be the covering of degree 8 associated with 
$\omega_{3}$. The curve $X$ is irreducible, since the monodromy group 
of $p:C\to Y$ is $W(B_3)$ by Corollary~\ref{s0.56}. Let 
$f:X\overset{g}{\lto}\tilde{Y}\overset{h}{\lto}Y$ be the decomposition 
defined in \S\ref{s2.6}.
\end{block} 
\begin{lem}\label{s1.34}
The following equalities hold
\renewcommand{\theenumi}{\roman{enumi}}
\begin{enumerate}
\item 
$P(X,\delta) = Ker(Nm_{g}:P(X,X')\to P(\tilde{Y},Y))^{0}$
\item 
$(\delta+3)P(X,X')=g^{*}P(\tilde{Y},Y)$.
\end{enumerate}
\end{lem}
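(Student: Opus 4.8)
The plan is to apply the general results of Section~\ref{s2} to the special case $n=3$ and sharpen the inclusions of Propositions~\ref{prop2.5} and \ref{prop2.7} to equalities. Recall that for $n=3$ we have $q=2^{n-1}=4$, so the endomorphism $\delta$ restricted to the relevant anti-invariant part satisfies $(\delta-1)(\delta+3)=0$, and the spaces $(1-\delta)$ and $(\delta+3)$ are complementary. The two parts of the lemma should in fact be logically linked, since $K=Ker(Nm_g:P(X,X')\to P(\tilde Y,Y))^0$ and $g^*P(\tilde Y,Y)$ are (up to isogeny) complementary abelian subvarieties of $P(X,X')$, just as $(1-\delta)P(X,X')$ and $(\delta+3)P(X,X')$ are.

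First I would prove (ii). By Proposition~\ref{prop2.7}(ii) (applied with $n=3$, so $2^{n-1}-1=3$) we already have the inclusion $g^*P(\tilde Y,Y)\subset (\delta+3)P(X,X')$. For the reverse inclusion, the cleanest route is dimension-counting: I would compute $\dim (\delta+3)P(X,X')$ and $\dim g^*P(\tilde Y,Y)=\dim P(\tilde Y,Y)$ and check they agree. By Remark after Proposition~\ref{prop2.5}, $P(X,\delta)=(\delta-1)P(X,X')=(1-\delta)P(X,X')$, so $\dim(\delta+3)P(X,X')=\dim P(X,X')-\dim P(X,\delta)$. Using the Hurwitz computations \eqref{es2.1c} for $\dim P(X,X')$, \eqref{es2.6a} for $\dim P(\tilde Y,Y)$, and the dimension of $P(X,\delta)$ (which equals $\dim P(C,C')$ from \eqref{es2.1a} by the isogeny of Proposition~\ref{s0.16}, or can be read off from \eqref{es2.1b} and the eigenspace description following Proposition~\ref{s0.14a}), one checks the numerology for $n=3$. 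Since $g^*$ has finite kernel on $P(\tilde Y,Y)$, the inclusion of abelian subvarieties of equal dimension forces equality (after confirming connectedness — $g^*P(\tilde Y,Y)$ is the image of a connected variety and $(\delta+3)P(X,X')$ is by definition connected).

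Then (i) follows by a complementarity argument. By Proposition~\ref{prop2.7}(i) we have $P(X,\delta)\subset K$. To get equality I would argue that $K$ and $g^*P(\tilde Y,Y)$ together generate $P(X,X')$ and intersect in a finite group: indeed $Nm_g\circ g^*$ is multiplication by $2$ on $P(\tilde Y,Y)$, so $g^*P(\tilde Y,Y)\cap K$ is finite, and a dimension count (the same Hurwitz numerology) shows $\dim K+\dim g^*P(\tilde Y,Y)=\dim P(X,X')$. Hence $K$ is the complementary abelian subvariety of $g^*P(\tilde Y,Y)=(\delta+3)P(X,X')$ in $P(X,X')$; but the complement of $(\delta+3)P(X,X')$ with respect to the restricted polarization is exactly $(1-\delta)P(X,X')=P(X,\delta)$, by the eigenspace decomposition of $\delta$ and the fact that $\delta$ is symmetric with respect to the Rosati involution (so its eigenspaces are orthogonal for the Riemann form). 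Therefore $K=P(X,\delta)$.

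The main obstacle I anticipate is the bookkeeping in the complementarity step: one must be careful that "complementary abelian subvariety" is taken with respect to the polarization $\Theta_{JX}$ restricted to $P(X,X')$, and that the orthogonality of the $\delta$-eigenspaces is with respect to that same form — this is where the symmetry of $\delta$ under the Rosati involution (already implicit in Proposition~\ref{s0.14a} and used in the proof of Proposition~\ref{prop2.7}(ii)) is essential. A secondary point to watch is connectedness: Proposition~\ref{prop2.7}(i) only gives containment in $K=(\text{something})^0$, so throughout one should track connected components, which is why the dimension counts (rather than set-theoretic kernel computations) are the right tool. If the dimension arithmetic for general $n$ did not close up, one would genuinely need $n=3$ here, which is consistent with the lemma being stated only in that case — the special feature is that $q-1=3$ makes $(\delta+3)$ play the role that $g^*$ naturally does, with no room for intermediate eigenvalues.
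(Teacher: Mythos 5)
Your proposal is correct and follows essentially the same route as the paper: both reduce the lemma to the inclusions already established in Proposition~\ref{prop2.7} and then close them up by comparing dimensions via the Hurwitz formulas \eqref{es2.1a}, \eqref{es2.1c}, \eqref{es2.6a} and the isogeny $P(X,\delta)\sim P(C,C')$. The only difference is cosmetic: the paper does (i) directly from $\dim K=\dim P(X,X')-\dim P(\tilde Y,Y)$ (surjectivity of $Nm_g$ for odd $n$) and then deduces (ii) from $P(X,X')=(1-\delta)P(X,X')+(\delta+3)P(X,X')$ with finite intersection, so your complementarity detour for (i) is unnecessary --- the dimension count you already set up gives $K=P(X,\delta)$ immediately.
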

\begin{proof}
In Proposition~\ref{prop2.7} we proved inclusions, so it only remains 
to compare the dimensions. We have $\dim P(X,\delta)=\dim P(C,C')$ 
since these are isogenous Abelian varieties by Proposition~\ref{s0.16}. Using \eqref{es2.1a}, 
\eqref{es2.1c} and \eqref{es2.6a} for $n=3$ we conclude that 
$\dim P(X,\delta)=\dim P(X,X')-P(\tilde{Y},Y)$, so (i) holds. 

One has $P(X,\delta)=(1-\delta)P(X,X')$ by Proposition~\ref{prop2.5} (ii). 
Furthermore $P(X,X')=(1-\delta)P(X,X')+(\delta+3)P(X,X')$, and the two 
Abelian varieties on the right intersect in a finite number of 4-torsion points. 
Therefore $\dim (\delta+3)P(X,X')=\dim P(\tilde{Y},Y)$. Hence (ii) 
holds as well.
\end{proof}
\begin{thm}\label{s1.35}
Let $p:C\to C'\to Y$ be a simple $B_3$-covering. Let $f:X\to Y$ be the 
covering of degree 8 associated with the spinor weight and let 
$P(X,\delta)$ be the Prym-Tyurin variety. Then the polarization types 
of $\Theta_{JC}|_{P(C,C')}$ and
 $\Theta_{JX}|_{P(X,\delta)}$ are respectively
\begin{align}
&(\underbrace{1,\ldots,1}_{\frac{1}{2}|\mathfrak{D}_{s}|-1},
\underbrace{2,\ldots,2}_{\frac{1}{2}|\mathfrak{D}_{\ell}|+2g(Y)-2},
\underbrace{2,\ldots,2}_{g(Y)})\label{es1.35a}\\
&(\underbrace{2,\ldots,2}_{\frac{1}{2}|\mathfrak{D}_{\ell}|+2g(Y)-2},
\underbrace{4,\ldots,4}_{\frac{1}{2}|\mathfrak{D}_{s}|-1},
\underbrace{8,\ldots,8}_{g(Y)})\label{es1.35}
\end{align}
If $Y\cong \mathbb{P}^1$, then Conjecture~\ref{s10.25a} holds: 
$P(X,\delta)$ is isomorphic to the dual of $P(C,C')$.
\end{thm}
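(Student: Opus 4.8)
The plan is to deduce both polarization types, and the isomorphism over $\mathbb{P}^1$, from the theory of complementary abelian subvarieties applied to a principally polarized abelian variety built from $P(X,X')$. The first type is immediate: since $\pi\colon C\to C'$ is a double covering ramified over $|\mathfrak{D}_s|$ points, the Hurwitz computation of $g(C),g(C')$ in \S\ref{s2.1} together with the classical description of the Prym polarization of a ramified double covering (cf.\ \S\ref{s10.24}) gives $\Theta_{JC}|_{P(C,C')}$ the type $(1^{\,g(C)-2g(C')},2^{\,g(C')})$, which is \eqref{es1.35a} after writing the $2$-block as $\tfrac12|\mathfrak{D}_\ell|+2g(Y)-2$ copies plus $g(Y)$ copies.

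For $\Theta_{JX}|_{P(X,\delta)}$ I would first use that, since $n=3$, the involution $\sigma$ on $X$ is fixed-point free, so $X\to X'$ is an \'{e}tale double covering; hence $\Theta_{JX}|_{P(X,X')}$ has type $(2,\dots,2)$ and $L_0:=\tfrac12\Theta_{JX}|_{P(X,X')}$ is a principal polarization on $P(X,X')$. The coefficients in \eqref{eqn15} are symmetric in $A$ and $B$, so $\delta$ is symmetric for the Rosati involution of $\Theta_{JX}$, hence of $L_0$; by Proposition~\ref{s0.14a} and the inclusion $P(X,X')\subset A$ (used in the proof of Proposition~\ref{prop2.5}), $\delta$ restricts on $P(X,X')$ to a symmetric endomorphism with $(\delta-1)(\delta+3)=0$. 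By Lemma~\ref{s1.34}, $P(X,\delta)=(\delta-1)P(X,X')$ is, up to isogeny, the $(-3)$-eigenspace of $\delta$ and $g^{*}P(\tilde{Y},Y)=(\delta+3)P(X,X')$ the $1$-eigenspace, so $P(X,\delta)$ and $g^{*}P(\tilde{Y},Y)$ are complementary abelian subvarieties of the ppav $(P(X,X'),L_0)$. The main tool is then the standard fact that for complementary abelian subvarieties $A_1,A_2$ of a ppav $(A,L)$ one has $\ker\varphi_{L|_{A_1}}\cong A_1\cap A_2\cong\ker\varphi_{L|_{A_2}}$; since $\ker\varphi_{L|_{A_i}}$ is the direct sum of two copies of $\bigoplus_k\mathbb{Z}/d_k^{(i)}\mathbb{Z}$ with $(d_k^{(i)})$ the type of $L|_{A_i}$, this forces the non-unit elementary divisors of $L_0|_{P(X,\delta)}$ and of $L_0|_{g^{*}P(\tilde{Y},Y)}$ to coincide.

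So it remains to compute $L_0|_{g^{*}P(\tilde{Y},Y)}$. Here one must be careful that $g\colon X\to\tilde{Y}$ has degree $2^{n-1}=4$, \emph{not} $2$, and that by Corollary~\ref{s0.56} the monodromy group is $W(B_3)$, so that $X\to\tilde{Y}$ has monodromy $W(D_3)\cong S_4$ acting primitively on $4$ points and is ramified over $h^{-1}(\mathfrak{D}_\ell)$; hence $g^{*}\colon J\tilde{Y}\to JX$ is injective. Since $Nm_g\circ g^{*}=\deg(g)=4$ and Jacobians are autodual, $g^{*}\Theta_{JX}\equiv 4\,\Theta_{J\tilde{Y}}$, and transporting through the isomorphism $g^{*}\colon P(\tilde{Y},Y)\overset{\sim}{\lto}g^{*}P(\tilde{Y},Y)$ shows $\Theta_{JX}|_{g^{*}P(\tilde{Y},Y)}$ has the type of $4\,\Theta_{J\tilde{Y}}|_{P(\tilde{Y},Y)}$. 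As $h\colon\tilde{Y}\to Y$ is ramified exactly over $\mathfrak{D}_s$ (cf.\ \S\ref{s2.6}), $\Theta_{J\tilde{Y}}|_{P(\tilde{Y},Y)}$ has type $(1^{\,\frac12|\mathfrak{D}_s|-1},2^{\,g(Y)})$, so $L_0|_{g^{*}P(\tilde{Y},Y)}$ has type $(2^{\,\frac12|\mathfrak{D}_s|-1},4^{\,g(Y)})$. By the previous paragraph $L_0|_{P(X,\delta)}$ has the same non-unit divisors; padding with $1$'s up to $\dim P(X,\delta)=\tfrac12(|\mathfrak{D}_s|+|\mathfrak{D}_\ell|)+3g(Y)-3$ (cf.\ \eqref{es2.1a}) gives type $(1^{\,\frac12|\mathfrak{D}_\ell|+2g(Y)-2},2^{\,\frac12|\mathfrak{D}_s|-1},4^{\,g(Y)})$, and multiplying by $2$ yields \eqref{es1.35}.

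Finally, for $Y\cong\mathbb{P}^1$, Theorem~\ref{s10.25} supplies an isogeny $\mu\colon\hat{P}'\to P(X,\delta)$ with $\mu^{*}E_{P}=\tfrac{4}{d_1d_p}\hat{E}_{P'}$, where $(d_1,\dots,d_p)$ is \eqref{es1.35a} with $g(Y)=0$; computing the dual type via \S\ref{s0.5} one checks that $\tfrac{4}{d_1d_p}\hat{E}_{P'}$ has exactly the type \eqref{es1.35} with $g(Y)=0$, i.e.\ the same type as $E_{P}$ by the previous paragraph. Since $\mu$ is an isogeny and $\mu^{*}E_{P}$ and $E_{P}$ have the same polarization type, $\deg\mu=1$, so $\mu$ is an isomorphism and $P(X,\delta)\cong\hat{P}'=\widehat{P(C,C')}$. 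The step I expect to be the main obstacle is the degree/injectivity bookkeeping for $g$ in the third paragraph: it is precisely the injectivity of $g^{*}$, equivalently the absence of a nontrivial intermediate subcover of $X\to\tilde{Y}$, together with the value $\deg g=4$, that produces the exponent-$4$ (rather than exponent-$2$) entries in $L_0|_{g^{*}P(\tilde{Y},Y)}$; a secondary point is to confirm that $P(X,\delta)$ and $g^{*}P(\tilde{Y},Y)$ are literally each other's complements in $(P(X,X'),L_0)$, which is the statement that they are the two eigenspace images of the symmetric endomorphism $\delta$.
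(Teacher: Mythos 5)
Your proposal is correct and follows essentially the same route as the paper: the fixed-point-free involution gives the principal polarization $\Xi=L_0$ on $P(X,X')$, the type of $\Xi|_{(\delta+3)P(X,X')}=\Xi|_{g^*P(\tilde Y,Y)}$ is computed from $g^*\Theta_{JX}\equiv 4\Theta_{J\tilde Y}$ and the injectivity of $g^*$, and the type of $\Xi|_{P(X,\delta)}$ is then read off from the isomorphism of the kernels $K(\Xi|_{A_1})\cong K(\Xi|_{A_2})$ for complementary abelian subvarieties (the paper cites \cite{BL2}, Corollary~12.1.5). The only cosmetic difference is that for $Y\cong\mathbb{P}^1$ you treat the degenerate cases $|\mathfrak{D}_s|=2$ and $|\mathfrak{D}_\ell|=4$ by one uniform computation of $\tfrac{4}{d_1d_p}\hat{E}_{P'}$, whereas the paper separates them into three cases; the conclusion ($\mu^*E_P$ and $E_P$ have equal type, hence $\deg\mu=1$) is the same.
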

\begin{proof}
By \S\ref{s2.1} one has 
$g(C')=\frac{1}{2}|\mathfrak{D}_{\ell}|+3g(Y)-2$, so \eqref{es1.35a}
follows from the well known fact about ordinary Prym varieties (cf. 
\S\ref{s10.24}).
By \S\ref{s2.6} the covering $g:X\to \tilde{Y}$ of degree 4  is simply 
branched in $h^{-1}(\mathfrak{D}_{\ell})$ and has monodromy group 
$W(D_3)\cong S_{4}$. Therefore $g$ cannot be decomposed through an 
\'{e}tale covering of $\tilde{Y}$, so $g^{*}:J\tilde{Y}\to JX$ is 
injective. We have a commutative diagram 
\begin{equation}\label{es1.36}
\begin{CD}
JX           @<g^*<<         J\tilde{Y} \\
@AAA                           @AAA      \\
P(X,\sigma)  @<g^*<<         P(\tilde{Y},Y)
\end{CD}
\end{equation}
The restriction $\Theta_{J\tilde{Y}}|_{P(\tilde{Y},Y)}$ has type 
$(1,\ldots,1,2,\ldots,2)$ where 1 appears 
$\frac{1}{2}|\mathfrak{D}_{s}|-1$ times and 2 appears $g(Y)$ times. 
The pull-back of $\Theta_{JX}$ by $g^{*}$ is $4\Theta_{J\tilde{Y}}$. 
The involution $\sigma:X\to X$ is fixed point free, so 
$\Theta_{JX}|_{P(X,X')}=2\Xi$ for a principal polarization $\Xi$. 
Using \eqref{es1.36}, the fact that $g^{*}$ is injective and the 
equality $(\delta+3)P(X,X')=g^{*}P(\tilde{Y},Y)$ of 
Lemma~\ref{s1.34} we conclude that $\Xi|_{(\delta+3)P(X,X')}$ has 
polarization type $(2,\ldots,2,4,\ldots,4)$ where 2 appears 
$\frac{1}{2}|\mathfrak{D}_{s}|-1$ times and 4 appears $g(Y)$ times.
 
Recall that a polarization $L$ on an Abelian variety $B$ determines 
$\varphi_{L}:B\to \hat{B}$ and $K(L):=Ker(\varphi_{L})$.  If 
$(A,\Theta)$ is a principally polarized Abelian variety with a 
Prym-Tyurin endomorphism $\delta$ and if $B=(\delta+q-1)A, P=(1-\delta)A$, then 
$B$ and $P$ are a pair of complementary Abelian subvarieties of $A$. Hence the
finite groups $K(\Theta|_B)$ and $K(\Theta|_P)$ are isomorphic according to 
\cite{BL2}, Corollary~12.1.5.
Applying this to 
$(A,\Theta)=(P(X,X'),\Xi)$ we conclude that the polarization of 
$\Xi|_{P(X,\delta)}$ is $(1,\ldots,1,2,\ldots,2,4,\ldots,4)$ where the 
lengths of the sequences $(2,\ldots,2)$, $(4,\ldots,4)$ and 
$(1,\ldots,1)$ are respectively $\frac{1}{2}|\mathfrak{D}_{s}|-1, 
g(Y)$ and $\dim P(X,\delta)-\dim(\delta+3)P(X,X')$. The latter 
number equals 
$\dim P(X,\delta)-P(\tilde{Y},Y)=\frac{1}{2}|\mathfrak{D}_{\ell}|+2g(Y)
-2$. Since $\Theta_{JX}|_{P(X,X')}=2\Xi$ we obtain, restricting on 
$P(X,\delta)$ the polarization type of \eqref{es1.35}

Let $Y=\mathbb{P}^1$. Let $P=P(X,\delta)$ and $P'=P(C,C')$. One has 
$|\mathfrak{D}_{\ell}|\geq 4$ since $p':C'\to \mathbb{P}^1$ is a 
triple covering, simply ramified in $\mathfrak{D}_{\ell}$. By 
hypothesis $\pi:C\to C'$ is ramified, so $|\mathfrak{D}_{s}|\geq 2$.
 
Suppose first $|\mathfrak{D}_{s}|>2$ and $|\mathfrak{D}_{\ell}|>4$. By 
Theorem~\ref{s10.25} we have $\mu^{*}E_{P}=2\hat{E}_{P'}$. By 
\eqref{es1.35a} and \eqref{es1.35}, applied with $g(Y)=0$, the 
polarization types of $E_{P}$ and $2\hat{E}_{P'}$ are the same. 
Therefore $\mu:\hat{P}'\to P$ is an isomorphism. 

Let $|\mathfrak{D}_{s}|=2$. Then $\tilde{Y}\cong \mathbb{P}^1$, 
$P=P(X,X')$, and $E_{P}=2E_{\Xi}$ for some principal polarization $\Xi$ 
on $P$. Since $\pi:C\to C'$ is ramified in two points we have 
$E_{P'}=2E_{\Xi'}$ for some principal polarization $\Xi'$ of 
$P'=P(C,C')$. By Theorem~\ref{s10.25} we have 
$\mu^{*}E_{P}=\hat{E}_{P'}$. Therefore $\mu^{*}E_{\Xi}=E_{\Xi'}$ and 
we have  polarized isomorphisms $(P(C,C'),E_{\Xi'})\cong 
(\hat{P}(C,C'),\hat{E}_{\Xi'})\cong (P(X,X'),E_{\Xi})$. 

Let $|\mathfrak{D}_{\ell}|=4$. Then $C'\cong \mathbb{P}^1$ and 
$P'=P(C,C')=JC$. Here $P\subset P(X,X')$ and $E_{P}=4E_{\Xi}$ for a 
principal polarization $\Xi$, according to \eqref{es1.35}. By 
Theorem~\ref{s10.25} we have $\mu^{*}E_{P}=4\hat{E}_{JC}$. 
Identifying $JC$ with its dual we obtain a polarized isomorphism 
$(JC,\Theta_{JC})\cong (P(X,\delta),\Xi)$. 
\end{proof}
The polarized isomorphism $P(C,C')\cong P(X,X')$ 
is a particular case of the tetragonal construction of Donagi. 
The limit case $|\mathfrak{D}_{\ell}|=4$ yields a new calss of 
principally polarized Prym-Tyurin varieties. 
\begin{cor}\label{s1.39}
Let $\pi:C\to \mathbb{P}^1$ be a hypereliptic curve. Let 
$p':C'=\mathbb{P}^1\to \mathbb{P}^1$ be a simple triple covering, so 
that $p:C\overset{\pi}{\lto}C'\overset{p'}{\lto}\mathbb{P}^1$ is a 
simply ramified $B_3$-covering (cf. \S\ref{s0.1}). Let $f:X\to 
\mathbb{P}^1$  be the associated covering of degree 8 (cf. 
\S\ref{s0.3}), and let $\delta:JX\to JX$ be the associated 
Prym-Tyurin endomorphism, satisfying $(\delta-1)(\delta+3)=0$. Then the 
Prym-Tyurin variety $P(X,\delta)=(1-\delta)JX$ has a principal 
polarization $\Xi$ with the property that 
$\Theta_{JX}|_{P(X,\delta)}\equiv 4\Xi$. Furthermore $(P(X,\delta),\Xi)\cong 
(JC,\Theta_{JC})$. 
\end{cor}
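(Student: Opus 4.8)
The statement is the limiting case $Y\cong\mathbb{P}^1$, $|\mathfrak{D}_\ell|=4$ of Theorem~\ref{s1.35}, so the plan is to check that the present hypotheses place us in that case and then read off the two conclusions. First I would verify that $p:C\overset{\pi}{\lto}C'\overset{p'}{\lto}\mathbb{P}^1$ is a \emph{simple} $B_3$-covering in the sense of Definition~\ref{s0.57a}: by hypothesis $p$ is a simply ramified $B_3$-covering, and since $\deg p'=3$ is prime, $p'$ is automatically primitive (Remark~\ref{ptv1.2}); moreover $\pi$ is ramified because $C$ is hyperelliptic, and $p'$ is ramified because a simple triple covering $\mathbb{P}^1\to\mathbb{P}^1$ has, by the Hurwitz formula, exactly $|\mathfrak{D}_\ell|=4$ branch points. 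By the genus formula $g(C')=\frac{1}{2}|\mathfrak{D}_\ell|+3g(Y)-2$ of \S\ref{s2.1}, the condition $C'\cong\mathbb{P}^1$ is indeed equivalent to $|\mathfrak{D}_\ell|=4$ once $Y\cong\mathbb{P}^1$. Since $J\mathbb{P}^1=0$ we have $P(C,C')=Ker(Nm_\pi:JC\to JC')^0=JC$, and the restriction of $\Theta_{JC}$ to it is $\Theta_{JC}$ itself, a principal polarization, so its polarization type is $(d_1,\dots,d_p)=(1,\dots,1)$.

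Next I would specialize the polarization type \eqref{es1.35} of Theorem~\ref{s1.35}. With $g(Y)=0$ and $|\mathfrak{D}_\ell|=4$ the block $(2,\dots,2)$ has length $\frac{1}{2}|\mathfrak{D}_\ell|+2g(Y)-2=0$ and the block $(8,\dots,8)$ has length $g(Y)=0$, so the polarization type of $\Theta_{JX}|_{P(X,\delta)}$ collapses to $(4,\dots,4)$ with $\frac{1}{2}|\mathfrak{D}_s|-1=g(C)$ entries. By definition of polarization type this says precisely that there is a principal polarization $\Xi$ on $P(X,\delta)$, necessarily of dimension $g(C)$, with $\Theta_{JX}|_{P(X,\delta)}\equiv 4\Xi$; equivalently $E_{\Theta_{JX}|_{P(X,\delta)}}=4E_\Xi$ at the level of Riemann forms. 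This proves the first assertion.

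For the second assertion I would invoke the last sentence of Theorem~\ref{s1.35}: the canonical isogeny $\mu:\widehat{P(C,C')}\to P(X,\delta)$ of Theorem~\ref{s10.25} is an isomorphism. By Theorem~\ref{s10.25} applied with $q=2^{n-1}=4$ and $d_1=d_p=1$ one has $\mu^{*}E_{\Theta_{JX}|_{P(X,\delta)}}=4\,\widehat{E}_{JC}$, hence $4\,\mu^{*}E_\Xi=4\,\widehat{E}_{JC}$ and therefore $\mu^{*}E_\Xi=\widehat{E}_{JC}$. Since the dual polarization of the principal polarization $\Theta_{JC}$ is again principal, $\varphi_{\Theta_{JC}}$ identifies $(JC,\Theta_{JC})$ with $(\widehat{JC},\widehat{\Theta}_{JC})$; composing this identification with $\mu$ gives the desired polarized isomorphism $(JC,\Theta_{JC})\cong(P(X,\delta),\Xi)$.

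Since Theorem~\ref{s1.35} already carries all the weight, there is essentially no obstacle here; the only slightly delicate points are pure bookkeeping — confirming that a hyperelliptic $C$ together with a simple triple covering $\mathbb{P}^1\to\mathbb{P}^1$ really does assemble into a simple $B_3$-covering of $\mathbb{P}^1$ (so that the hypotheses of Theorem~\ref{s1.35} are met), and the harmless self-duality of $(JC,\Theta_{JC})$ that converts the statement ``$P(X,\delta)$ is the dual of $P(C,C')$'' into the asserted isomorphism of principally polarized Abelian varieties.
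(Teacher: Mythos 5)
Your proposal is correct and follows exactly the paper's route: the corollary is precisely the limiting case $Y\cong\mathbb{P}^1$, $|\mathfrak{D}_{\ell}|=4$ treated in the last paragraph of the proof of Theorem~\ref{s1.35}, where $P(C,C')=JC$, $E_P=4E_\Xi$ by \eqref{es1.35}, and $\mu^*E_P=4\hat{E}_{JC}$ from Theorem~\ref{s10.25} yields the polarized isomorphism. Your bookkeeping (primitivity of the degree-$3$ map, ramification of $\pi$ and $p'$, the count $\frac{1}{2}|\mathfrak{D}_s|-1=g(C)$, and the self-duality of $(JC,\Theta_{JC})$) is all accurate.
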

\begin{rem}\label{s1.39a}
We notice that in this example the correspondence $D\in Div(X\times 
X)$ has 8 fixed points, two ones in each of the fibers $f^{-1}(y)$ 
where $y$ belongs to the branch locus of $p':\mathbb{P}^1\to 
\mathbb{P}^1$. The equality $\Theta_{JX}|_{P(X,\delta)}\equiv 4\Xi$
cannot be deduced from Ortega's criterion 
\cite{Or}, since this criterion requires the existence of 4 fixed 
points $p_{1},\ldots,p_{4}$ of $D$ which satisfy 
$p_{1},\ldots,p_{k}\in D(p_{k})$ and $p_{k}\notin D(p_{k})-p_{k}$ for 
$k=1,\ldots,4$. Hence these 4 points have to belong to a single fiber 
of $f:X\to \mathbb{P}^1$, which is not the case in 
Corollary~\ref{s1.39}. 
\end{rem}
\begin{block}\label{s1.40}
We studied in \S{\S}\ref{s1.5a}--\ref{s1.39a} simple 
$B_3$-coverings. Let us consider another class of simply 
ramified $B_3$-coverings. Namely, let 
$p:C\overset{\pi}{\lto}C'\overset{p'}{\lto}Y$ be with unramified $\pi$  
 and let $p$ have monodromy group $W(D_3)$ (cf. 
Corollary~\ref{s0.56}). 
 Since $W(D_3)$ has two orbits when acting on 
$W\omega_{3}$, namely $\{\lambda_{A}\}_{A\; \text{even}}$ and 
$\{\lambda_{A}\}_{A\; \text{odd}}$, the covering $f:X\to Y$ splits into 
disjoint union 
$$
f:X_{1}\sqcup X_{2}\to Y.
$$
Let $f_{i}=f|_{X_{i}}$. The 
canonical involution $\sigma:X\to X$ associated with 
$-id:W\omega_{3}\to W\omega_{3}$ transforms isomorphically $X_{1}$ and 
$X_{2}$ into each other. If $x\in X_{i}$ we have 
\begin{equation}\label{es1.8}
D(x)\ =\ f^{*}_{i}(f_{i}(x)) - x +2\sigma(x)
\end{equation}
Let $\varphi :Z\to Y$ be a fixed covering of degree 4 equivalent to 
any of $f_{i}:X_{i}\to Y,\quad i=1,2$. The covering $\varphi:Z\to Y$ 
is simply ramified with monodromy group $W(D_3)=S_{4}$. 
 We have $JX = JX_{1}\times 
JX_{2}\cong JZ\times JZ$. Let 
$$
B=Ker(Nm_{\varphi}:JZ\to JY).
$$ 
Identifying $A$ with the kernel of the homomorphism $JZ\times JZ\to JY$ given
by $(a,b)\mapsto \varphi(a)+\varphi(b)$ we see that 
under the map $\varphi \times \varphi$ the variety
$A$ maps onto the 
antidiagonal  
$$
\Delta^{-}(JY\times JY):=\{(\lambda,-\lambda)|\lambda \in JY\}.
$$ 
One obtains an exact sequence
\[
0 \lto B\times B \lto A \lto JY \lto 0
\]
\end{block}
\begin{lem}\label{s1.8}
The Prym-Tyurin variety $P(X,\delta)=(1-\delta)A$ equals the antidiagonal of $B\times 
B$, $P(X,\delta)=\Delta^{-}(B\times B)$.
\end{lem}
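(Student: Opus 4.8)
The plan is to use the formula \eqref{es1.8} for the correspondence $D$ restricted to each component $X_i$, together with the identification of $A$ as an extension $0\to B\times B\to A\to JY\to 0$. First I would compute $\delta$ on $JX\cong JZ\times JZ$. Writing a point of $A$ as a pair $(a,b)$ with $a\in JX_1$, $b\in JX_2$ and $\sigma$ swapping the two factors, formula \eqref{es1.8} gives
\[
\delta(a,b) = \bigl(f_1^*f_1(a) - a + 2\,\sigma_2(b),\; f_2^*f_2(b) - b + 2\,\sigma_1(a)\bigr),
\]
where $\sigma_1,\sigma_2$ are the isomorphisms $X_1\xrightarrow{\sim}X_2$, $X_2\xrightarrow{\sim}X_1$ induced by $\sigma$. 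Hence
\[
(1-\delta)(a,b) = \bigl(2a - f_1^*f_1(a) - 2\sigma_2(b),\; 2b - f_2^*f_2(b) - 2\sigma_1(a)\bigr).
\]
The next step is to restrict to $A$. On $A$ one has $f_1(a) = Nm_{f_1}(a)$ and, since $(a,b)\in A$ means $\varphi(a)+\varphi(b)=0$ under the identification via $\varphi\times\varphi$, the norm parts are forced to be negatives of each other; in particular, for $(a,b)$ in the sub-abelian-variety $B\times B$ (where the norms vanish) we get simply $(1-\delta)(a,b) = (2a - 2\sigma_2(b),\, 2b - 2\sigma_1(a))$, whose image lands in the antidiagonal $\Delta^-(B\times B) = \{(c,-\sigma_2(c))\,\}$ — more precisely, after identifying $JX_1\cong JX_2\cong JZ$ via the fixed $\varphi$-equivalences, in $\{(c,-c)\mid c\in B\}$.

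The core of the argument is then a dimension count combined with these two containments. One inclusion, $P(X,\delta)=(1-\delta)A \subseteq \Delta^-(B\times B)$, should follow from the computation above once one checks that the $JY$-part of $A$ is killed by $1-\delta$ (equivalently, that $f^*JY\subset (\delta+3)A$, which is the general structure of Proposition~\ref{s0.14a}: the eigenvalue of $d\delta$ on $f^*(JY)$ is $\deg D$, so $(1-\delta)$ annihilates $f^*JY$ up to isogeny, and here $f^*JY$ maps onto the $JY$ in the exact sequence $0\to B\times B\to A\to JY\to 0$). For the reverse inclusion it suffices to compare dimensions: $\dim\Delta^-(B\times B) = \dim B = \dim P(Z,Z')$-type quantity $=g(Z)-g(Y)$, while $\dim P(X,\delta)=\dim P(C,C')$ by Proposition~\ref{s0.16}, and by the Hurwitz computations of \S\ref{s2.1} (applied with $n=3$, $\mathfrak D_s=\emptyset$) both equal $\tfrac12|\mathfrak D_\ell| + 3g(Y) - 3$ — note $\dim B = g(Z)-g(Y)$ and $g(Z)=\tfrac12|\mathfrak D_\ell|+4g(Y)-3$ by Hurwitz for the degree-$4$ simply ramified covering branched over $\mathfrak D_\ell$. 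Since $\Delta^-(B\times B)$ is connected and contains $P(X,\delta)=(1-\delta)A$ (also connected) with equal dimension, they coincide.

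The main obstacle I expect is bookkeeping with the identifications: $A$ was defined as a subvariety of $JX_1\times JX_2$, but "the antidiagonal of $B\times B$" only makes literal sense after choosing the isomorphisms $\sigma_i$ (or the $\varphi$-equivalences) to identify $JX_1$ with $JX_2$, and one must make sure the sign conventions in "$\Delta^-$" are consistent with the $+2\sigma(x)$ term in \eqref{es1.8} and with the exact sequence $0\to B\times B\to A\to JY\to 0$ derived in \S\ref{s1.40}. Once the identifications are pinned down, showing $(1-\delta)$ sends $A$ into $\Delta^-(B\times B)$ is a direct substitution, and the dimension equality closes the argument with no further input beyond Proposition~\ref{s0.16}, Proposition~\ref{s0.14a}, and the Hurwitz formulas already recorded.
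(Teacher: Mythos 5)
Your overall skeleton (compute $1-\delta$ on $B\times B$ via \eqref{es1.8}, then close with a dimension count against $\dim P(C,C')$) is workable, but the step you use to get the containment $P(X,\delta)\subseteq \Delta^{-}(B\times B)$ is wrong as stated. You claim the $JY$-part of $A$ is killed by $1-\delta$ because ``the eigenvalue of $d\delta$ on $f^{*}(JY)$ is $\deg D$.'' That eigenvalue is $\deg D=2^{n-1}(n-2)+1=5$ for $n=3$, so $1-\delta$ acts on $f^{*}JY$ as multiplication by $-4$, which is an isogeny, not zero. Moreover $f^{*}JY$ is the wrong subvariety: it is the \emph{diagonal} lift $\{(f_1^{*}m,f_2^{*}m)\}$, which meets $A$ only in torsion (its norm is $8\cdot id_{JY}$), so the assertion ``$f^{*}JY\subset(\delta+3)A$'' does not parse. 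The subvariety that actually complements $B\times B$ in $A$ is the antidiagonal lift $Q=\{(f_1^{*}m,-f_2^{*}m)\mid m\in JY\}$, and to see that $1-\delta$ kills $Q$ you must compute from \eqref{es1.8} that $D(f_i^{*}(y))=3f_i^{*}(y)+2f_j^{*}(y)$, whence $\delta$ fixes $Q$ pointwise. That computation is true but nowhere in your proposal, and it does not follow from Proposition~\ref{s0.14a}.

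The good news is that your argument repairs itself if you run the containment the other way. Your own formula $(1-\delta)(\lambda,\mu)=2(\lambda-\mu,\mu-\lambda)$ on $B\times B$ shows $(1-\delta)$ is multiplication by $4$ on $\Delta^{-}(B\times B)$, hence $\Delta^{-}(B\times B)=(1-\delta)\Delta^{-}(B\times B)\subseteq(1-\delta)A=P(X,\delta)$; combined with your (correct) dimension count $\dim\Delta^{-}(B\times B)=\dim B=\tfrac12|\mathfrak{D}_{\ell}|+3g(Y)-3=\dim P(C,C')=\dim P(X,\delta)$ and connectedness, equality follows with no information about $Q$ at all. For comparison, the paper avoids the dimension count entirely: it first reduces to $P(X,X')=\Delta^{-}(JZ\times JZ)$ via Proposition~\ref{prop2.5}(ii), decomposes this as $(\varphi^{*}\times\varphi^{*})\Delta^{-}(JY\times JY)+\Delta^{-}(B\times B)$, and kills the first summand using Proposition~\ref{prop2.7}(ii) (it lies in $(\delta+3)P(X,X')$); the only computation is the one you already did on $B\times B$. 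Either route is fine, but as written your justification of the inclusion $P(X,\delta)\subseteq\Delta^{-}(B\times B)$ is a genuine error.
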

\begin{proof}
If $(\lambda,\mu)\in B\times B$, then \eqref{es1.8} yields 
\(
(1-\delta)(\lambda,\mu)\ =\ 2(\lambda-\mu,\mu-\lambda),
\)
so
\begin{equation}\label{es1.40}
(1-\delta)\Delta(B\times B) = 0, \quad (1-\delta)\Delta^{-}(B\times B)
= \Delta^{-}(B\times B)
\end{equation}
Let us apply the results of Section~\ref{s2} to the type of coverings 
we consider (see Remark~\ref{ptv1.5}). Here $\tilde{Y}=Y\sqcup Y$, the covering $g:X\to 
\tilde{Y}$ is $\varphi \sqcup \varphi : Z\sqcup Z\to Y\sqcup Y$, 
$P(X,X')=\Delta^{-}(JZ\times JZ)$, $P(\tilde{Y},Y)=\Delta^{-}(JY\times 
JY)$. One has the canonical decomposition $JZ=\varphi^{*}(JY)+B$, so 
\[
\Delta^{-}(JZ\times JZ) = 
(\varphi^{*}\times \varphi^{*})\Delta^{-}(JY\times JY) + \Delta^{-}(B\times B)
\]
By Proposition~\ref{prop2.7}~(ii) one has 
\[
(\varphi^{*}\times \varphi^{*})\Delta^{-}(JY\times JY)\subset (\delta+3)\Delta^{-}(JZ\times JZ)
\]
so $(1-\delta)$ annihilates this Abelian subvariety. By Proposition~\ref{prop2.5}~(ii) one has
$P(X,\delta)=(1-\delta)\Delta^{-}(JZ\times JZ)$, so using \eqref{es1.40} we obtain 
$P(X,\delta)=\Delta^{-}(B\times B)$.
\end{proof}
\begin{pro}\label{s1.9}
Let $p:C\overset{\pi}{\lto}C'\overset{p'}{\lto}Y$ be a simply ramified 
$B_3$-covering with unramified $\pi$ and monodromy group $W(D_3)$.
Let $f:X\to Y$ be the associated covering of degree 8. Then the 
polarization type of $\Theta_{JX}|_{P(X,\delta)}$ is 
$(2,\ldots,2,8,\ldots,8)$ where 8 appears $g(Y)$ times.
\end{pro}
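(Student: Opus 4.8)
The plan is to read off the polarization type from Lemma~\ref{s1.8} together with the description of $\Theta_{JZ}|_{B}$ already available in the Recillas situation of \S\ref{s1.5}.

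First I would record the relevant identifications. Since the monodromy group is $W(D_3)$, we have $X=X_1\sqcup X_2$ with $X_1\cong X_2\cong Z$, hence $JX=JX_1\times JX_2\cong JZ\times JZ$; because $X$ is a disjoint union, its canonical polarization $\Theta_{JX}$ is the product polarization $p_1^{*}\Theta_{JZ}+p_2^{*}\Theta_{JZ}$, where $p_1,p_2$ are the two projections. By Lemma~\ref{s1.8}, $P(X,\delta)=\Delta^{-}(B\times B)$ with $B=Ker(Nm_{\varphi}:JZ\to JY)$, and the morphism $j:B\to JZ\times JZ$, $j(b)=(b,-b)$, is an isomorphism of $B$ onto $P(X,\delta)$.

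Next I would compute $j^{*}\Theta_{JX}$. Writing $\iota:B\hookrightarrow JZ$ for the inclusion, $\Delta:JZ\to JZ\times JZ$ for the diagonal and $\psi:JZ\times JZ\to JZ\times JZ$ for the automorphism $(x,y)\mapsto(x,-y)$, one has $j=\psi\circ\Delta\circ\iota$. Since $(-1)^{*}\Theta_{JZ}$ is algebraically equivalent to $\Theta_{JZ}$, one gets $\psi^{*}\Theta_{JX}\equiv\Theta_{JX}$, whence
\[
j^{*}\Theta_{JX}\ \equiv\ \iota^{*}\Delta^{*}\bigl(p_1^{*}\Theta_{JZ}+p_2^{*}\Theta_{JZ}\bigr)\ =\ 2\,\iota^{*}\Theta_{JZ}\ =\ 2\,(\Theta_{JZ}|_{B}).
\]
Therefore the polarization type of $\Theta_{JX}|_{P(X,\delta)}$ is exactly twice that of $\Theta_{JZ}|_{B}$.

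Finally I would determine the type of $\Theta_{JZ}|_{B}$. The covering $\varphi:Z\to Y$ is simply ramified of degree $4$ with monodromy group $W(D_3)\cong S_4$ acting primitively on four letters, so it admits no decomposition through an \'{e}tale double covering; thus $\varphi^{*}:JY\to JZ$ is injective, $B=Ker(Nm_{\varphi})$ is connected, and, by the argument of \S\ref{s1.5} (using \cite{BL2}, Corollary~12.1.5), $\Theta_{JZ}|_{B}$ has polarization type $(1,\ldots,1,4,\ldots,4)$ with $4$ occurring $g(Y)$ times. Doubling yields $(2,\ldots,2,8,\ldots,8)$ with $8$ occurring $g(Y)$ times, which is the assertion. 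The only step that needs a little care is the middle one: one must make sure that $\Delta^{-}(B\times B)$ is genuinely reached through the \emph{isomorphism} $j$ (and not merely an isogeny $B\to P(X,\delta)$) and that $\Theta_{JX}$ is indeed the product polarization. Both are immediate — $j$ is visibly bijective onto the antidiagonal, and the canonical polarization of the Jacobian of a disjoint union of curves is the product of the canonical polarizations — so the argument reduces to bookkeeping of polarization types.
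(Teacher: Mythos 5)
Your proposal is correct and follows essentially the same route as the paper: both reduce to Lemma~\ref{s1.8}, identify $P(X,\delta)$ with $B$ via the antidiagonal embedding $b\mapsto(b,-b)$, observe that the product polarization restricts to twice $\Theta_{JZ}|_{B}$ (the paper does this directly on Riemann forms, $E_{P}(u,v)=2E_{B}(x,y)$, rather than via $(-1)^{*}\Theta_{JZ}\equiv\Theta_{JZ}$), and quote the type $(1,\ldots,1,4,\ldots,4)$ of $\Theta_{JZ}|_{B}$ from the primitivity of $\varphi$ as in \S\ref{s1.5}. The bookkeeping is the same, so nothing further is needed.
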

\begin{proof}
Let $E_{B}$ be the Riemann form of $\Theta_{JX}|_{B}$. Since $\varphi 
:Z\to Y$ is primitive, the polarization type of $E_{B}$ is 
 $(1,\ldots,1,4,\ldots,4)$ where $4$ appears $g(Y)$ times (cf. 
\S\ref{s1.5}). By Lemma~\ref{s1.8}
 $P(X,\delta)=P\subset B\times B$ is the antidiagonal, so denoting by 
$\tilde{{}}$ the universal covering, we have 
$E_{P}=(p^{*}_{1}E_{B}+p^{*}_{2}E_{B})|_{_{\tilde{P}}}$ and for every 
$u=(x,-x)\in \tilde{B}\times \tilde{B}$ and every $v=
(y,-y)\in \tilde{B}\times \tilde{B}$ we have 
\begin{equation*}
E_{P}(u,v) = (p^{*}_{1}E_{B}+p^{*}_{2}E_{B})(x,-x;y,-y) = 2E_{B}(x,y)
\end{equation*}
Therefore the polarization type of $E_{P}$ is $(2,\ldots,2,8,\ldots,8)$. 
\end{proof}

\section{Rank(R) = 4}\label{s6}

\begin{block}\label{s4.1} {\bf The case $W = W(B_4)$.} Let $p: C \stackrel{g}{\ra} C' \stackrel{g}{\ra} Y$
be a simple $B_4$-covering and let $f: X \ra Y$ the covering of degree 16 associated with the spinor weight $\omega_4$.
The curve $X$ is irreducible, since the monodromy group of the covering $p$ is $W(B_4)$ by Corollary~\ref{s0.56}. The involution 
' on $C$ induces an involution $\sigma$ on the curve $X$ (see \S\ref{s2.1}). Denote $X' := X/\sigma$. 
We cannot say much about the induced polarizations on Prym-Tyurin varieties involved. However we can say something about the
relations between these varieties. 

For $i = 0, \ldots, 3$ define a correspondence $D_i$ of $X$ by
\[
D_i(x_{\emptyset}) = \sum_{|B|=i+1} x_B.
\]
So $D = \sum_{i=0}^3 iD_i$. 
\end{block}

\begin{pro} \label{prop4.2} If $\delta_i$ denotes the endomorphism of $JX$ 
associated to $D_i$, we have
\[
P(X,\delta) = (\delta_0 + 2)P(X,X').
\]
\end{pro}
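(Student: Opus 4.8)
The plan is to reduce the statement to a short computation with the correspondences $D_i$ restricted to the Prym variety $P(X,X')$, on which the induced automorphism $\sigma$ acts as $-\mathrm{id}$ (recall from \S\ref{s2.1} that for $n\ge 3$ the involution $\sigma$ is fixed‑point free, so $P(X,X')$ is exactly the anti‑invariant abelian subvariety of $JX$ up to isogeny). First I would rewrite the defining formula \eqref{eqn15}: since $|A|+|B|-2|A\cap B|=|A\triangle B|$, one has $D_i(x_A)=\sum_{|A\triangle B|=i+1}x_B$ for every $A\subset\{1,\ldots,4\}$. In particular $|A\triangle B|=4$ forces $B=\overline{A}$, so $D_3$ is the graph of $\sigma$ and the induced endomorphism $\delta_3$ equals $\sigma$.

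Next I would record the symmetry relations produced by complementation. For subsets of a $4$‑element set one checks $\overline{A}\triangle B=\overline{A\triangle B}$, hence $|\overline{A}\triangle B|=4-|A\triangle B|$; substituting into the formula for $D_i$ gives, at the level of correspondences, $\sigma\circ D_i=D_i\circ\sigma=D_{2-i}$ for $i=0,1,2,3$ (with $D_{-1}$ the diagonal, consistently with $D_3\circ\sigma=\sigma^2=\mathrm{id}$). Passing to induced endomorphisms, each $\delta_i$ commutes with $\sigma$ and therefore preserves $P(X,X')$; restricting to $P(X,X')$, where $\sigma=-\mathrm{id}$, the case $i=1$ gives $-\delta_1=\delta_1$, so $\delta_1$ vanishes on $P(X,X')$ (its image is a connected abelian subvariety killed by $2$, hence trivial), and the case $i=0$ gives $\delta_2=-\delta_0$ on $P(X,X')$.

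The conclusion then falls out. Since $D=\sum_{i=0}^{3}iD_i=D_1+2D_2+3D_3$, on $P(X,X')$ the endomorphism $\delta$ equals $\delta_1+2\delta_2+3\delta_3=-2\delta_0-3$, hence $1-\delta=2\delta_0+4=2(\delta_0+2)$ on $P(X,X')$. By Proposition~\ref{prop2.5}(ii) we have $P(X,\delta)=(\delta-1)P(X,X')=(1-\delta)P(X,X')$ (as $-1$ is an automorphism), so $P(X,\delta)=2(\delta_0+2)P(X,X')$; since multiplication by $2$ is surjective on the abelian variety $P(X,X')$ and $\delta_0+2$ maps $P(X,X')$ into itself, this equals $(\delta_0+2)P(X,X')$, which is the assertion.

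I expect the only delicate point to be the bookkeeping in the second step: one must keep careful track of the convention relating a correspondence to the endomorphism it induces, and verify that the purely combinatorial complementation identities on $\{1,\ldots,4\}$ genuinely translate into the relations $\sigma\delta_i=\delta_i\sigma=\delta_{2-i}$ (and hence into $\delta_1|_{P(X,X')}=0$, $\delta_2|_{P(X,X')}=-\delta_0|_{P(X,X')}$). Everything after that is a formal computation; a convenient consistency check is the identity $\delta_0+\delta_1+\delta_2+\delta_3=-\mathrm{id}$ on $P(X,X')$, which follows from $f^{*}f=\mathrm{id}+D_0+D_1+D_2+D_3$ together with the vanishing of $f^{*}f$ on $Ker(Nm_f)^{0}$.
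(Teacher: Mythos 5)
Your proof is correct and is essentially the paper's argument: the paper evaluates $(D-1)(x-\sigma(x))$ directly and uses $D_2(x)=D_0(\sigma(x))$ to get $(D-1)(x-\sigma(x))=2(D_0+2)(\sigma(x)-x)$, which is exactly your identity $1-\delta=2(\delta_0+2)$ on $P(X,X')$, and then concludes via Proposition~\ref{prop2.5}(ii) and the surjectivity of multiplication by $2$. Your systematic derivation of $\sigma\circ D_i=D_i\circ\sigma=D_{2-i}$ (whence $\delta_1|_{P(X,X')}=0$ and $\delta_2=-\delta_0$ there) is just a cleaner packaging of the same cancellation.
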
 
\begin{proof}
For any $x = x_{\emptyset} \in X$ we have
\[
(D -1)(x - \sigma(x)) = 
2(D_2 - D_0 )(x) + 4 \sigma(x) - 4x.
\]
Now $D_2(x) = D_0(\sigma(x))$ implies
$(D -1)(x - \sigma(x)) = 2(D_0 + 2) 
(\sigma(x) - x).$
According to Proposition \ref{prop2.5}(ii) this gives the assertion.
\end{proof}
According to Proposition \ref{prop2.5}(i), $P(X,\delta)$ is an abelian subvariety of 
the Prym variety $P(X,X')$. Hence we 
can speak about its complementary variety with respect to the polarization.
\begin{pro} \label{prop4.3}
The complementary abelian subvariety of $P(X,\delta)$ in $P(X,X')$ is given by
\[
(\delta + 7)P(X,X') = (\delta_0 - 2)P(X,X').
\]
\end{pro}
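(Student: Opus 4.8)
The statement asserts two things: that the abelian subvariety complementary to $P(X,\delta)$ inside $P(X,X')$ is $(\delta+7)P(X,X')$, and that this subvariety equals $(\delta_0-2)P(X,X')$.

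\textbf{Step 1: $(\delta+7)P(X,X')$ is the complement of $P(X,\delta)$.} The plan is to realize $P(X,X')$ as a principally polarized abelian variety carrying the endomorphism $\delta$. Since $n=4\geq 3$, the involution $\sigma$ is fixed point free, hence $\Theta_{JX}|_{P(X,X')}=2\Xi$ for a principal polarization $\Xi$. The correspondence $D$ is symmetric, because the coefficient $|A|+|B|-2|A\cap B|-1$ appearing in \eqref{eqn15} is symmetric in $A$ and $B$; therefore $\delta$ is fixed by the Rosati involution of $\Theta_{JX}$, hence also by that of $\Xi$. By Lemma~\ref{lem2.3} $\delta$ commutes with $\sigma$ and so preserves $P(X,X')$, and since $P(X,X')\subset A=Ker(Nm_f)^{0}$ the restriction $\delta|_{P(X,X')}$ still satisfies $(\delta-1)(\delta+7)=0$ (here $q=2^{n-1}=8$). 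Thus $\delta|_{P(X,X')}$ is a Prym--Tyurin endomorphism of $(P(X,X'),\Xi)$, and by the complementarity property of Prym--Tyurin endomorphisms recalled in the proof of Theorem~\ref{s1.35} (see \cite{BL2}, Corollary~12.1.5), the abelian subvarieties $(1-\delta)P(X,X')$ and $(\delta+7)P(X,X')$ are complementary in $P(X,X')$. Since $(1-\delta)P(X,X')=P(X,\delta)$ by Proposition~\ref{prop2.5}(ii), this is the first assertion.

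\textbf{Step 2: $(\delta+7)P(X,X')=(\delta_0-2)P(X,X')$.} Here I would start from the identity obtained in the proof of Proposition~\ref{prop4.2}, namely $(D-1)(x-\sigma(x))=2(D_0+2)(\sigma(x)-x)$ for $x=x_\emptyset\in X$, which rests on $D_2(x_\emptyset)=D_0(\sigma(x_\emptyset))$. Passing to the induced endomorphisms of $JX$ --- legitimate by verifying the identity on a general fibre and taking closures, as in Remark~\ref{ptv1.5} --- this becomes $(\delta-1)\circ(1-\sigma)=-2(\delta_0+2)\circ(1-\sigma)$ in $End(JX)$, and adding $8(1-\sigma)$ to both sides gives
\[
(\delta+7)\circ(1-\sigma)=-2(\delta_0-2)\circ(1-\sigma).
\]
Now restrict to $P(X,X')$. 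Being the Prym variety of the double covering $X\to X'$, $P(X,X')$ carries the action $\sigma=-id$, so $1-\sigma$ restricts to multiplication by $2$. Cancelling this factor, which is legitimate because $Hom(P(X,X'),JX)$ is torsion free, yields $(\delta+7)|_{P(X,X')}=-2(\delta_0-2)|_{P(X,X')}$. Applying both sides to $P(X,X')$ and using that multiplication by $-2$ is an isogeny, we conclude $(\delta+7)P(X,X')=(\delta_0-2)P(X,X')$.

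\textbf{Main obstacle.} The computations are essentially routine; the points that require care are the bookkeeping with the two polarizations $\Xi$ and $2\Xi=\Theta_{JX}|_{P(X,X')}$ when invoking the complementarity statement in Step~1, and the verification in Step~2 that the pointwise identity of Proposition~\ref{prop4.2} really descends to an equality of endomorphisms of $JX$ (the base-point constants drop out since one is comparing homomorphisms). As a consistency check, $(\delta+7)|_{P(X,X')}=-2(\delta_0-2)|_{P(X,X')}$ is equivalent to $(1-\delta)|_{P(X,X')}=2(\delta_0+2)|_{P(X,X')}$, which returns Proposition~\ref{prop4.2}, confirming the sign.
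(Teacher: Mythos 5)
Your proof is correct and follows essentially the same route as the paper: the complementarity of $(1-\delta)P(X,X')$ and $(\delta+7)P(X,X')$ via the quadratic equation and Proposition~\ref{prop2.5}(ii), and then the identity $(D+7)(x-\sigma(x))=2(D_0-2)(\sigma(x)-x)$ extracted from the proof of Proposition~\ref{prop4.2}. You merely spell out details the paper leaves implicit (the Rosati-symmetry justification for complementarity, and the descent of the pointwise correspondence identity to endomorphisms of $P(X,X')$ where $1-\sigma$ acts as multiplication by $2$), and your sign bookkeeping checks out.
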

\begin{proof}
According to Propositions \ref{prop2.5}(ii) and \ref{s0.14a} the complementary 
abelian subvariety is given by 
$(\delta + 7)P(X,X')$. From the proof of Proposition \ref{prop4.2} we see that
$(D+7)(x - \sigma( x)) = 2(D_0 
-2)(\sigma(x) - x).$
This implies the last assertion.  
\end{proof}
Let $\rho:X \to X'$ denote the double covering associated to the involution 
$\sigma$. The next proposition describes the endomorphism
$(\delta_0 +2)(\delta_0 - 2)$ on $\rho^*JX'$.

\begin{pro} \label{prop4.4}
For any $x = x_{\emptyset} \in X$,
\[
(D_0+2)(D_0-2)(x + \sigma(x)) = 4 D_1(x).
\]
In particular $(\delta_0+2)(\delta_0 - 2)(\rho^*(JX')) = \delta_1(JX)$.
\end{pro}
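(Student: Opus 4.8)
The plan is to reduce the first identity to a single composition formula for the correspondence $D_0$, namely
\[
D_0\circ D_0 \;=\; 4\,\Delta \;+\; 2\,D_1
\]
in $\mathrm{Div}(X\times X)$, where $\Delta$ is the diagonal. Granting this, and using that $D_0$ commutes with $\Delta$, one gets $(D_0+2)(D_0-2)=D_0\circ D_0-4\,\Delta=2D_1$ as correspondences; applying this to the cycle $x+\sigma(x)$ and invoking the identity $D_1(\sigma(x))=D_1(x)$ (established below) yields $(D_0+2)(D_0-2)(x+\sigma(x))=2D_1(x+\sigma(x))=2\bigl(D_1(x)+D_1(\sigma(x))\bigr)=4D_1(x)$, which is the first assertion.

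To prove the composition formula I would argue on a generic fibre and take closures, exactly in the spirit of Remark~\ref{ptv1.5}. Over $y\in Y\setminus\mathfrak{D}$ the fibre is $f^{-1}(y)=\{x_A\mid A\subset\{1,\ldots,4\}\}$, and by the monodromy description of $X\to Y$ underlying \eqref{eqn15} the correspondence $D_i$ acts by $D_i(x_A)=\sum_{d(A,B)=i+1}x_B$, where $d(A,B):=|A|+|B|-2|A\cap B|$ is the Hamming distance on the $4$-dimensional cube of subsets of $\{1,\ldots,4\}$. Hence the coefficient of $x_C$ in $D_0(D_0(x_A))$ equals the number of length-$2$ walks in this cube from $A$ to $C$: there are $4$ such walks when $C=A$, exactly $2$ when $d(A,C)=2$, and none otherwise (a length-two walk lands at even distance $\leq 2$). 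This gives $D_0(D_0(x_A))=4x_A+2\sum_{d(A,C)=2}x_C=4x_A+2D_1(x_A)$ on the generic fibre, hence the asserted identity of correspondences. The same fibrewise picture, together with $\sigma(x_A)=x_{\overline{A}}$ and $d(\overline{A},B)=4-d(A,B)$, shows $D_1\circ\sigma=D_1$ (and likewise $\sigma\circ D_1=D_1$), as was used above.

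For the displayed consequence, I would pass from correspondences to endomorphisms of $JX$: the formula above becomes $\delta_0^{\,2}=4\cdot\mathrm{id}+2\delta_1$, so $(\delta_0+2)(\delta_0-2)=2\delta_1$, and $D_1\circ\sigma=D_1$ becomes $\delta_1\circ\sigma=\delta_1$, with $\sigma$ now also denoting the induced automorphism of $JX$. Since $\rho:X\to X'$ is a double covering of the connected curve $X'$, the norm map $Nm_\rho:JX\to JX'$ is surjective and $\rho^*\circ Nm_\rho=1+\sigma$ on $JX$, whence $\rho^*(JX')=(1+\sigma)(JX)$. Therefore
\[
(\delta_0+2)(\delta_0-2)\bigl(\rho^*(JX')\bigr)=2\delta_1\bigl((1+\sigma)(JX)\bigr)=2\bigl(\delta_1+\delta_1\sigma\bigr)(JX)=4\,\delta_1(JX)=\delta_1(JX),
\]
the last equality because multiplication by $4$ is surjective on the abelian variety $JX$.

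I do not expect a real obstacle here: the argument is a short combinatorial computation in the Boolean cube followed by the standard translation into abelian varieties. The only points deserving care are the bookkeeping of the direction of composition when passing from correspondences to induced homomorphisms — handled uniformly, as in \S\ref{s2.2}--\ref{prop2.7}, by verifying the cycle identities on the generic fibre and taking closures — and the identification $\rho^*(JX')=(1+\sigma)(JX)$ together with the surjectivity of $[4]$ on $JX$, which is what collapses the final image to $\delta_1(JX)$ rather than merely to $4\delta_1(JX)$.
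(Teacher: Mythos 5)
Your proof is correct and is essentially the paper's own argument: both reduce to the fibrewise computation $D_0\circ D_0=4\Delta+2D_1$ on the Boolean cube of subsets of $\{1,\ldots,4\}$ (the paper expands $(D_0+2)(D_0-2)$ on $x$ and on $\sigma(x)$ separately and adds, which is the same count of length-two walks), together with $D_1\circ\sigma=D_1$. Your extra care in deducing the second assertion via $\rho^*(JX')=(1+\sigma)(JX)$ and the surjectivity of multiplication by $4$ just makes explicit what the paper leaves implicit.
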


\begin{proof}
For any $x = x_{\emptyset} \in X, \;\;(D_0 - 2)(x) = 
\sum_{|B|=1}x_B - 2x$ and hence
\[
(D_0 + 2)(D_0 - 2)(x)
= 4x_{\emptyset} + 2 \sum_{|B|=2}x_B - 2 \sum_{|B|=1}x_B + 2 \sum_{|B|=1}x_B 
-4x_{\emptyset}
\]
\hspace{3.9cm}  $= 2 D_1(x)$.

\vspace{0.3cm} \noindent
Similarly we get $(D_0 + 2)(D_0 - 2)(\sigma(x)) =2 
D_1(x)$. Adding both equations gives the assertion.
\end{proof}

\begin{block}\label{s4.2} {\bf The case $W = W(D_4)$.}
Let $p: C \stackrel{\pi}{\ra} C' \stackrel{p'}{\ra} Y$ denote a simply ramified $B_4$-covering with $\pi$ unramified.
Suppose moreover that $p$ has monodromy group $W(D_4)$ (cf. Corollary~\ref{s0.56}). 
We notice that this condition is automatically satisfied if $Y\cong \mathbb{P}^1$ by  Corollary~\ref{s0.57}.
 When $g(Y)\geq 1$ the monodromy group might be $W(B_n)$ or conjugated to $N_{W(B_n)}(G_1)$ (see 
Remark~\ref{ptv1.1}),
 cases which we do not consider in 
the present paper.
As in \S\ref{s1.40} the curve $X$, associated to the spinor weight $\omega_4$, 
is the disjoint union of two smooth irreducible curves $X = X_1 \sqcup X_2$.
For $i=1$ and $2$ the map $f_i := f|X_i: X_i \ra Y$
is an $8:1$ covering. Using the notation of above, we have for any $x = x_{\emptyset} \in X$,
$f_1^{-1}(f(x)) = \{x_B \}_{|B|\; even}$
and $f_2^{-1}(f(x)) = \{x_B \}_{|B| \;
odd}$. Hence the involution $\sigma : X \ra X$ induced by the involution ' on $C$ 
restricts to involutions $\sigma_i = \sigma|X_i : X_i \ra X_i$ for $i=1$ 
and 2. They induce factorizations of $f_i$, 
namely
$$
f_i: X_i \to X'_i \to Y.
$$
The map $X_i \to X'_i = X_i/\sigma_i$ is an \'etale double 
covering, since 
$\sigma$ and thus the involution $\sigma_i$ 
are fixed-point free. The map $X'_i \to Y$ is a 4:1 covering, simply ramified 
exactly 
over the $|D_l|$ ramification points of $p: C \ra Y$. This 
implies $g(X'_i) = \frac{|D_l|}{2} + 4g(Y) - 3 = g(C')$
and hence 
$$
\dim P(X_i,X'_i) = \frac{|D_l|}{2} + 4g(Y) - 4 = \dim P(C,C').
$$

\noindent
Consider the correspondence $D$ on $X$ defined in \S\ref{s0.3}.
\end{block}
%


\begin{lem} \label{lem4.6}
For any $y \in Y$ and $i=1$ and 2 we have
$$
D(f_i^*(y)) = 8 f^*(y) + f_i^*(y).
$$
\end{lem}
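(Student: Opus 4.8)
The plan is to reduce the identity to a single generic fibre of $f$ and then exploit the symmetry $B\leftrightarrow\overline B$ under complementation, together with Lemmas~\ref{lem2.3} and~\ref{lem2.4}. It clearly suffices to treat $i=1$, the case $i=2$ being identical.

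First I would note that both sides are divisors on $X$ depending algebraically on $y\in Y$, so, as in the reductions of Remark~\ref{ptv1.5}, it is enough to verify the equality for $y$ outside the branch locus of $f$ and then pass to closures. Fix such a $y$ and a point $x=x_{\emptyset}\in f^{-1}(y)$; then $f^{-1}(y)=\{x_B\mid B\subseteq\{1,2,3,4\}\}$ and, as recalled in \S\ref{s4.2}, $f_1^{-1}(y)=\{x_B\mid |B|\,\mathrm{even}\}$. Hence, as divisors on $X$, $f_1^*(y)=\sum_{|B|\,\mathrm{even}}x_B$ and $f^*(y)=\sum_{B}x_B$, the last sum ranging over all $16$ subsets.

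The main step is the following computation. Since $n=4$ is even, complementation $B\mapsto\overline B:=\{1,2,3,4\}\setminus B$ preserves the parity of $|B|$ and therefore permutes the index set of $f_1^*(y)$; also $\sigma(x_B)=x_{\overline B}$ (see the proof of Lemma~\ref{lem2.3}). Replacing $B$ by $\overline B$ in half of the summands,
\[
2\,D\bigl(f_1^*(y)\bigr)=\sum_{|B|\,\mathrm{even}}\bigl(D(x_B)+D(x_{\overline B})\bigr)=\sum_{|B|\,\mathrm{even}}\bigl(D(x_B)+D(\sigma x_B)\bigr).
\]
By Lemma~\ref{lem2.3} one has $D\sigma=\sigma D$, so each summand equals $D(x_B+\sigma x_B)$, and Lemma~\ref{lem2.4}, applied with the point $x_B$ in place of $x$ and with $n=4$, gives $D(x_B+\sigma x_B)=x_B+\sigma x_B+2f^*f(x_B)=x_B+x_{\overline B}+2f^*(y)$. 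Summing over the $8$ even subsets $B$ and using once more that $B\mapsto\overline B$ is a bijection of that set, one obtains $2\,D(f_1^*(y))=2f_1^*(y)+16\,f^*(y)$, i.e.\ $D(f_1^*(y))=8f^*(y)+f_1^*(y)$ for generic $y$, hence for every $y\in Y$.

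No genuine obstacle is expected: once Lemma~\ref{lem2.4} is in hand the argument is essentially this short computation. The only point that requires a word of care is the passage from a generic $y$ to an arbitrary one — in particular to the branch points — which is handled precisely as the analogous closure arguments of Remark~\ref{ptv1.5}.
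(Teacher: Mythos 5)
Your argument is correct, but it is not the route the paper takes. The paper proves Lemma~\ref{lem4.6} by brute force: it writes $f_1^*(y)=x_{\emptyset}+\sum_{|B|=2}x_B+x_{\Sigma}$ and expands $D(x_{\emptyset})$, $\sum_{|B|=2}D(x_B)$ and $D(x_{\Sigma})$ term by term from the explicit formula \eqref{eqn15}, then collects coefficients to land on $9$'s on the even subsets and $8$'s on the odd ones. You instead exploit the fact that for $n=4$ complementation preserves parity, so $B\mapsto\overline B$ permutes the index set of $f_1^*(y)$; this lets you pair $D(x_B)$ with $D(x_{\overline B})=D(\sigma x_B)$ and invoke Lemma~\ref{lem2.4} (with $n-2=2$) to evaluate each pair as $x_B+x_{\overline B}+2f^*(y)$, giving $2D(f_1^*(y))=2f_1^*(y)+16f^*(y)$. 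The application of Lemma~\ref{lem2.4} at the point $x_B$ is legitimate, since by the convention of \S\ref{s2.2} any point of $X$ serves as the $x_{\emptyset}$ of its own labelling. Your approach buys a computation-free proof that recycles already-established structural facts (Lemmas~\ref{lem2.3} and~\ref{lem2.4}) and makes transparent why the coefficient is $q=2^{n-1}=8$; the paper's direct expansion is self-contained and does not depend on the parity of $n$ making complementation act on each $f_i$-fiber separately, which is the one special feature of $n=4$ your argument genuinely needs. The reduction to generic $y$ and passage to closures is the same in both treatments.
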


\begin{proof}
We prove this only for $i=1$, the proof being the same for $i=2$. 
Let $x = x_{\emptyset} = p_1 + \cdots + p_4 \in X$ with $f_i(x) = y$. Moreover 
let $\Sigma = \{1,2,3,4\}$ and all 
sets $B$ are subsets of $\Sigma$.
Then $f_1^*(y) = x + \sum_{|B| =2}x_B + x_{\Sigma}$ and hence
$$
\begin{array}{rl}
Df_1^*(y) = & D(x) + \sum_{|B|=2}D(x_B) + D(x_{\Sigma})\\
          =  &  \sum_{|B|=2}x_B + 2 \sum_{|B|=3}x_B + 3 x_{\Sigma}\\
            & + 6 x_{\emptyset} + 6x_{\Sigma} +7 \sum_{|B|=2}x_B + 2(3 
\sum_{|B|=1}x_B + \sum_{|B|=3}x_B)\\
            & + \sum_{|B|=2}x_B + 2 \sum_{|B|=1}x_B +3 x_{\emptyset}\\
          =  & 9x_{\emptyset} + 9x_{\Sigma} + 9 \sum_{|B|=2}x_B + 8 
\sum_{|B|=1}x_B + 8 \sum_{|B|=3}x_B\\
          =  & 8 f^*(y) + f_1^*(y).
\end{array}
$$
\end{proof}





Consider the abelian variety $A = ker (Nm_f: JX \ra JY)$ and for $i=1$ and 2 define
$B_i = ker (Nm_{f_i}: JX_i \ra JY).$ Then $B_1 \times B_2$
and $Q = \{(f_1^*(m), -f_2^*(m))\;|\; m \in JY \}$ are abelian subvarieties of $A$ and 
it is easy to see that $A = B_1 \times B_2 + Q$. 

\begin{pro} \label{prop4.8}
Denoting by $\delta$ also the restriction to $A$ of the endomorphism defined by $D$,
we have $(\delta - 1)Q=0$ and $Q \subset (\delta +7)A$ in concordance with $q = 8$. 
\end{pro}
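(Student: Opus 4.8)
The plan is to read off, from Lemma~\ref{lem4.6}, the block form of the endomorphism $\delta$ relative to the decomposition $JX = JX_1 \times JX_2$, and then to deduce both assertions by a short formal computation. No geometry beyond Lemma~\ref{lem4.6} is needed.

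First I would record the action of $\delta$ on the tori pulled back from $Y$. Write $\iota_i : JX_i \hra JX$ for the two canonical inclusions. On $X = X_1 \sqcup X_2$ one has $f^*(y) = f_1^*(y) + f_2^*(y)$ as divisors, the first summand supported on $X_1$ and the second on $X_2$, so translating the identity $D(f_i^*(y)) = 8 f^*(y) + f_i^*(y)$ of Lemma~\ref{lem4.6} to divisor classes and using additivity of $f_i^*$ and of the correspondence gives, for every $m \in JY$,
\[
\delta\bigl(\iota_1 f_1^*(m)\bigr) = 9\,\iota_1 f_1^*(m) + 8\,\iota_2 f_2^*(m), \qquad \delta\bigl(\iota_2 f_2^*(m)\bigr) = 8\,\iota_1 f_1^*(m) + 9\,\iota_2 f_2^*(m).
\]
Next I would observe that $Q \subset A$: since $Nm_f(\mathfrak{a}_1,\mathfrak{a}_2) = Nm_{f_1}(\mathfrak{a}_1) + Nm_{f_2}(\mathfrak{a}_2)$ on $JX_1 \times JX_2$ and $Nm_{f_i}\circ f_i^* = \deg f_i = 8$, one has $Nm_f(\iota_1 f_1^*(m) - \iota_2 f_2^*(m)) = 8m - 8m = 0$, and $Q$ is connected, hence $Q \subset A^0$. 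Now $Q$ is exactly the set of elements $\iota_1 f_1^*(m) - \iota_2 f_2^*(m)$, $m \in JY$; subtracting the two displayed identities yields $\delta|_Q = \mathrm{id}_Q$, which is precisely $(\delta - 1)Q = 0$.

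From $\delta|_Q = \mathrm{id}_Q$ it follows that $(\delta + 7)|_Q = 8\cdot\mathrm{id}_Q$, and since multiplication by $8$ is surjective on the abelian variety $Q$, we get $Q = (\delta + 7)Q \subset (\delta + 7)A$, the second assertion. This is ``in concordance with $q = 8$'' in the expected sense: for the spinor weight of $B_4$ one has $q = 2^{n-1} = 8$ (cf.\ \S\ref{s0.3}), so by Proposition~\ref{s0.14a} the restriction of $\delta$ to $A^0$ satisfies $(\delta - 1)(\delta + q - 1) = (\delta - 1)(\delta + 7) = 0$, whence $A^0$ splits into the $(+1)$-eigen-subvariety $(\delta + q - 1)A = (\delta + 7)A$ and the $(1-q)$-eigen-subvariety $(1 - \delta)A = P(X,\delta)$; the computation above places $Q$ entirely in the former, the complement of the Prym--Tyurin variety.

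The argument is entirely formal once Lemma~\ref{lem4.6} is available, so I do not expect a real obstacle. The only point demanding a little care is the passage, in the first step, from the single-fibre identity of Lemma~\ref{lem4.6} to the identity of homomorphisms $JY \to JX$: one must check that $D$, acting on classes pulled back from $Y$, is given block-wise by the constant matrix with diagonal entries $9$ and off-diagonal entries $8$ with respect to $JX = JX_1 \times JX_2$, which is immediate from the decomposition of the fibre $f^{-1}(y) = f_1^{-1}(y) \sqcup f_2^{-1}(y)$.
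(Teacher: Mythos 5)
Your proof is correct and is essentially the paper's own argument: both rest entirely on Lemma~\ref{lem4.6}, computing that $(D-1)$ kills $f_1^*(c)-f_2^*(c)$ (the paper does the cancellation $8f^*(c)-8f^*(c)=0$ directly, you package the same identity as the block matrix $\begin{smallmatrix}9&8\\8&9\end{smallmatrix}$) and that $(D+7)$ acts as multiplication by $8$ on $Q$, whence $Q=(\delta+7)Q\subset(\delta+7)A$. Your explicit check that $Q\subset A$ is a small tidiness the paper omits, but there is no substantive difference in method.
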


\begin{proof}
For any divisor $c$ of degree 0 on $Y$ we have
$$
\begin{array}{rl}
(\delta -1)(f_1^*(c),-f_2^*(c)) = & (D-1)f_1^*(c) - (D-1)f_2^*(c)\\
= & 8f^*(c) + f_1^*(c) - f_1^*(c) - 8f^*(c) - f_2^*(c) + f_2^*(c) \\
= & 0.
\end{array}
$$
This proves the first assertion.
According to Lemma \ref{lem4.6}, $(D-1)(f_1^*(y)) = 8f^*(y)$. This implies
$(D+7)(f_1^*(y)) = 8f^*(y) + 8f_1^*(y)$
and similarly $(D+7)(-f_2^*(y)) = -8f^*(y) - 8f_2^*(y)$. 
Hence $(D+7)(f_1^*(y),-f_2^*(y)) = 8(f_1^*(y),-f_2^*(y))$
which implies the second assertion.
\end{proof}

\bibliographystyle{amsalpha}
\providecommand{\bysame}{\leavevmode\hbox to3em{\hrulefill}\thinspace}

\end{document}